\definecolor{darkgreen}{rgb}{0,0.45,0}
\definecolor{dkblue}{rgb}{0,0.1,0.5}
\definecolor{lightblue}{rgb}{0,0.5,0.5}
\definecolor{dkgreen}{rgb}{0,0.4,0}
\definecolor{dk2green}{rgb}{0.4,0,0}
\definecolor{dkviolet}{rgb}{0.6,0,0.8}
\newcommand{\jdeq}{\equiv}      
\newcommand{\defeq}{\coloneqq}  
\def\prd#1{{\textstyle\prod}(#1)\@ifnextchar\bgroup{\;\prd}{,}}
\def\fall#1{{\forall}(#1)\@ifnextchar\bgroup{\;\fall}{,}}
\def\sm#1{{\textstyle\sum}(#1)\@ifnextchar\bgroup{\;\prd}{,}}
\let\setof\Set    
\newcommand{\id}[3][]{\ensuremath{#2 =_{#1} #3}\xspace}
\newcommand{\refl}[1]{\ensuremath{\mathsf{refl}_{#1}}\xspace}
\newcommand{\ct}{\mathrel{\raisebox{.5ex}{$\centerdot$}}}
\newcommand{\opp}[1]{\mathord{{#1}^{-1}}}
\let\rev\opp
\newcommand{\trans}[2]{\ensuremath{{#1}_{*}\!\left({#2}\right)}\xspace}
\newcommand{\map}[2]{\ensuremath{{#1}\left({#2}\right)}\xspace}
\newcommand{\idfunc}[1][]{\ensuremath{\mathsf{id}_{#1}}\xspace}
\newcommand{\eqv}[2]{\ensuremath{#1 \simeq #2}\xspace}
\newcommand{\type}{\ensuremath{\mathsf{Type}}\xspace}
\renewcommand{\set}{\ensuremath{\mathsf{Set}}\xspace}
\newcommand{\prop}{\ensuremath{\mathsf{Prop}}\xspace}
\newcommand{\trunc}[2]{\Vert #2\Vert_{#1}}
\newcommand{\ttrunc}[2]{\big\Vert #2\big\Vert_{#1}}
\def\pizero{\trunc0}
\newcommand{\brck}[1]{\trunc{}{#1}}
\newcommand{\bbrck}[1]{\ttrunc{}{#1}}
\newcommand{\uset}{\ensuremath{\underline{\set}}\xspace}
\def\defthm#1#2{%
  \newtheorem{#1}{#2}[section]%
  \expandafter\def\csname #1autorefname\endcsname{#2}%
  \expandafter\let\csname c@#1\endcsname\c@thm}
\newtheorem{thm}{Theorem}[section]
\theoremstyle{definition}
\theoremstyle{remark}
\let\c@equation\c@thm
\numberwithin{equation}{section}
\newcommand{\inv}[1]{{#1}^{-1}}
\newcommand{\idtoiso}{\ensuremath{\mathsf{idtoiso}}\xspace}
\newcommand{\isotoid}{\ensuremath{\mathsf{isotoid}}\xspace}
\newcommand{\op}{^{\textrm{op}}}
\newcommand{\y}{\ensuremath{\mathbf{y}}\xspace}
\title{Univalent categories and the Rezk completion}
\author{Benedikt Ahrens}
\author{Krzysztof Kapulkin}
\author{Michael Shulman}
\date{\today}
\begin{document}

\begin{abstract}
  We develop category theory within Univalent Foundations, which is a foundational system for mathematics based on a homotopical interpretation of dependent type theory.
  In this system, we propose a definition of ``category'' for which equality and equivalence of categories agree.
  Such categories satisfy a version of the Univalence Axiom, saying that the type of isomorphisms between any two objects is equivalent to the identity type between these objects; we call them ``saturated'' or ``univalent'' categories.
  Moreover, we show that any category is weakly equivalent to a univalent one in a universal way.
  In homotopical and higher-categorical semantics, this construction corresponds to a truncated version of the Rezk completion for Segal spaces, and also to the stack completion of a prestack.
\end{abstract}

\maketitle

\section{Introduction}
\label{sec:introduction}

Of the branches of mathematics, category theory is one which perhaps fits the least comfortably into existing ``foundations of mathematics''.
This is true both at an informal level, and when trying to be completely formal using a computer proof assistant.
One problem is that naive category theory tends to run afoul of Russellian paradoxes and has to be reinterpreted using universe levels; we will not have much to say about this.
But another problem is that most of category theory is invariant under weaker notions of ``sameness'' than equality, such as isomorphism in a category or equivalence of categories, in a way which traditional foundations (such as set theory) fail to capture.
This problem becomes especially important when formalizing category theory in a computer proof assistant.

Our aim in this paper is to show that this problem can be ameliorated using the new \emph{Univalent Foundations} of mathematics, a.k.a.\ \emph{homotopy type theory}, proposed by V.~Voevodsky \cite{vv_uf}.
It builds on the existing system of dependent type theory \cite{martin-lof:bibliopolis, werner:thesis}, a logical system that is feasible for large-scale formalization of mathematics \cite{gonthier:feit-thompson} and also for internal categorical logic.
The distinctive feature of Univalent Foundations (UF) is its treatment of equality inspired by homotopy-theoretic semantics \cite{awodey-warren, arndt-kapulkin, warren:thesis, garner-van-den-berg:top-and-simp-models}.
Using this interpretation, Voevodsky has extended dependent type theory with an additional axiom, called the \emph{Univalence Axiom}, which was originally suggested by the model of the theory in the category of simplicial sets~\cite{klv:ssetmodel}, and should also be valid in other homotopical models such as categories of higher stacks.

The univalence axiom identifies \emph{identity} of types with \emph{equivalence} of types.
In particular, this implies that anything we can say about sets is automatically invariant under isomorphism, because isomorphism is identified with identity.
In other words, under the univalence axiom, the category of sets \emph{automatically} behaves ``categorically'', in that isomorphic objects cannot be distinguished.
Our goal in this paper is to extend this behavior to other categories, which requires a more careful analysis of the definition of ``category''.

If we ignore size issues, then in set-based mathematics, a category consists of a \emph{set} of objects and, for each pair $x,y$ of objects, a \emph{set} $\hom(x,y)$ of morphisms.
Under Univalent Foundations, a ``naive'' definition of category would simply mimic this with a \emph{type} of objects and \emph{types} of morphisms.
However, if we allowed these types to contain arbitrary higher homotopy, then we ought to impose higher coherence conditions on the associativity and unitality axioms, leading to some notion of $(\infty,1)$-category.
Eventually this should be done, but at present our goal is more modest.
We restrict ourselves to 1-categories, and therefore we restrict the hom-types $\hom(x,y)$ to be \emph{sets} in the sense of UF, i.e.\ types satisfying the principle UIP of ``uniqueness of identity proofs''.

More interesting is whether the type of objects should have any higher homotopy.
If we require it also to be a set, then we end up with a definition that behaves more like the traditional set-theoretic one.
Following Toby Bartels, we call this notion a \emph{strict category}.

However, a (usually) better option is to require a generalized version of the univalence axiom, identifying the \emph{identity type} $(x=_{\mathsf{Obj}} y)$ between two objects with the type $\mathsf{iso}(x,y)$ of \emph{isomorphisms} from $x$ to $y$.
(In particular, this implies that each type $(x=_{\mathsf{Obj}} y)$ is a set, and that therefore the type of objects is a \emph{1-type}, containing no higher homotopy above dimension 1.)
This seems to have been first suggested by Hofmann and Streicher~\cite{hs:gpd-typethy}, who also introduced a precursor of the univalence axiom under the name ``universe extensionality''.
We consider it to be the ``correct'' definition of \emph{category} in Univalent Foundations, since it automatically implies that anything we say about objects of a category is invariant under isomorphism.
For emphasis, we may call such a category a \emph{saturated} or \emph{univalent} category.

Most categories encountered in practice are saturated, at least in the presence of the univalence axiom.
Those which are not saturated, such as the category of $n$-types and homotopy classes of functions for $n\ge 1$, tend to behave much worse than the saturated ones.
Thus, in the non-saturated and non-strict case, we use instead the slightly derogatory word \emph{precategory}.

A good example of the difference between the three notions of category is provided by the statement ``every fully faithful and essentially surjective functor is an equivalence of categories'', which in classical set-based category theory is equivalent to the axiom of choice.
\begin{enumerate}
\item For strict categories, this is still equivalent to to the axiom of choice.
\item For precategories, there is no axiom of choice which can make it true.
\item For saturated categories, it is provable \emph{without} any axiom of choice.\label{item:satnoac}
\end{enumerate}
Saturated categories have the additional advantage that (as conjectured by Hofmann and Streicher~\cite{hs:gpd-typethy}) they are ``univalent as objects'' as well.
Specifically, just the way isomorphic objects \emph{in} a saturated category are equal, \emph{equivalent} saturated categories are themselves equal.

When interpreted in Voevodsky's simplicial set model, our precategories are similar to a truncated analogue of the \emph{Segal spaces} of Rezk~\cite[Sec.~14]{rezk01css}, while our saturated categories correspond to his \emph{complete Segal spaces}.
Strict categories correspond instead to (a weakened and truncated version of) \emph{Segal categories}.
It is known that Segal categories and complete Segal spaces are equivalent models for $(\infty,1)$-categories (see e.g.~\cite{bergner:infty-one}), so that in the simplicial set model, strict and saturated categories yield ``equivalent'' category theories---although as mentioned above, the saturated ones still have many advantages.

However, in the more general categorical semantics of a higher topos, a strict category corresponds to an internal category (in the traditional sense) in the corresponding 1-topos of sheaves, while a saturated category corresponds to a \emph{stack}.
Internal categories are \emph{not} equivalent to stacks (in fact, stacks form a localization of internal categories~\cite{jt:strong-stacks}), and it is well-known that stacks are generally a more appropriate sort of ``category'' relative to a topos.

Besides developing the basic theory of precategories and saturated categories, one of the main goals of this paper is to describe a universal way of ``saturating'' a precategory.
More precisely, we show that the obvious inclusion of saturated precategories into categories has a left adjoint, in the appropriate bicategorical sense.
More concretely, from any precategory $A$, we construct a saturated category $\widehat{A}$, with a universal functor
$A \to \widehat{A}$ (the unit of the adjunction).

With the connection to Rezk's complete Segal spaces in mind, we call the saturation of a precategory its \emph{Rezk completion}.
However, with higher topos semantics in mind, it could also reasonably be called the \emph{stack completion}: a strict category in the internal type theory of a higher topos corresponds to an internal category in the 1-topos of sheaves, and its Rezk completion is essentially its stack completion.
Our construction uses a Yoneda embedding as in~\cite{bunge:stacks-morita-internal} rather than a transfinite localization argument as in~\cite{jt:strong-stacks,rezk01css}, but it is also possible to mimic the latter more closely in type theory using ``higher inductive types''~\cite{ls:hits}.
A slightly expanded version of this paper, which includes this alternative proof, is included in~\cite[Chapter 9]{HoTTbook}.

The Rezk completion also sheds further light on the notion of equivalence of categories.
For instance, the functor $A \to \widehat{A}$ is always fully faithful and essentially surjective, hence a ``weak equivalence''.
It follows that a precategory is a saturated category exactly when it ``sees'' all fully faithful and essentially surjective functors as equivalences.
(The analogous facts for complete Segal spaces and stacks are well-known.)
In particular, the notion of saturated category is already inherent in the notion of ``fully faithful and essentially surjective functor''.

Finally, as mentioned above, one of the virtues of Univalent Foundations (and type theory more generally) is the feasibility of formalizing it in a computer proof assistant.
We have taken advantage of this by verifying large parts of the theory of precategories and saturated categories in the proof assistant \textsf{Coq}, building on Voevodsky's \emph{Foundations} library for UF~\cite{vv_foundations}.
In particular, the formalization includes the Rezk completion together with its universal property.
Our \textsf{Coq} files are attached to this arXiv submission.

\begin{rmk}
  Because saturated categories are the ``correct'' notion of category in UF, when working internally in UF we drop the adjective ``saturated'' and speak merely of \emph{categories}.
  The adjective is only necessary when comparing such categories to other ``external'' notions of category.
\end{rmk}

\subsection*{Outline of the paper}

In \S\ref{sec:background} we recall some definitions from Univalent Foundations.
Then in \S\S\ref{sec:cats}--\ref{sec:yoneda} we develop the basic theory of precategories and saturated categories informally, working entirely inside of Univalent Foundations.
We define functors, natural transformations, adjunctions, equivalences, and prove the Yoneda lemma.
We also show that equivalent categories are equal.
In \S\ref{sec:rezk} we construct the Rezk completion which, as described above, universally saturates any precategory.

Finally, \S\ref{sec:formalization} describes the content of our formalization, the organization of the source files, and the differences between informal presentation and its formal analog.
The actual \textsf{Coq} code is available as a supplement to this paper \cite{rezk_coq}.

\subsection*{Acknowledgements}
First and foremost, we would like to thank Vladimir Voevodsky for initiating the project of Univalent Foundations and for much assistance.
We are also very grateful to the organizers of the special year at the Institute for Advanced Study in 2012--2013, where much of this work was done.
The first- and the third-named author were supported by NSF grant DMS-1128155.
The second-named author was supported by NSF Grant DMS-1001191 (P.I. Steve Awodey).
Any opinions, findings, and conclusions or recommendations expressed in this material are those of the authors and do not necessarily reflect the views of the National Science Foundation.

The second-named author dedicates this work to his mother.

\section{Review of univalent foundations}
\label{sec:background}

Most of this paper is written in an informal style, with the intent of describing mathematics that could be formalized in Univalent Foundations, analogously to the way that traditional mathematics is discussed informally but is generally accepted to be formalizable in set theory.
We do not have space to give an introduction to UF here; instead we refer the reader to~\cite{pelayo-warren:univalent-foundations-paper}.
However, a brief reminder of the essential concepts may be helpful.

The basic objects are \emph{types}, which have \emph{elements}, with the basic judgment of elementhood denoted $a:A$.
There are the usual constructions on types such as dependent sums and dependent products, which we generally write about in English according to the propositions-as-types interpretation: we identify the activity of \emph{proving a theorem} with the activity of \emph{constructing a term in a type}.
For instance, a statement like ``for all $x:A$ we have $P(x)$'' indicates that we have an element of the type $\prd{x:A} P(x)$, while ``there exists an $x:A$ such that $P(x)$'' indicates $\sm{x:A} P(x)$.
Depending on context, we may also pronounce $\sm{x:A} P(x)$ as ``the type of $x:A$ such that $P(x)$'' and write it as $\setof{x:A | P(x)}$.

For $a,b:A$ there is an \emph{identity type} $a=b$ (or $a=_A b$ for emphasis), which in the homotopical semantics becomes a \emph{path type}.
It has the universal property that we may prove things about a general $p:a=b$ by restricting to the special case when $a$ and $b$ are the same and $p$ is ``reflexivity''.
We refer to this as \emph{path induction} or \emph{induction on identity}.
For instance, in this way we can show that if $(P(x))_{x:A}$ is a family of types indexed by $A$, and we have $p:a=_A b$ and $u:P(a)$, then we can \emph{transport} $u$ along $p$ to obtain an element $\trans{p}{u} : P(b)$.
Similarly, we can show that for any $f:A\to B$ and $p:x=_A y$, we have $f(p):f(x)=_B f(y)$, and we can compose paths (written $p\ct q$) and reverse paths (written $\opp{p}$).

The identity type of many types can be characterized up to equivalence (see below).
For instance, to say $(x,u) = (y,v)$ in $\sm{a:A} P(a)$ is equivalent to saying that $p:x =_A y$ and $\trans{p}{u} =_{P(y)} v$.
And to say $f=g$ in $\prd{a:A} P(a)$ is to say that $f(x)=g(x)$ for all $x:A$ (this is \emph{function extensionality}, which follows from the univalence axiom below).

A type $A$ is called a \emph{mere proposition} if for all $a,b:A$ we have $a=b$.
Homotopically, these are the spaces which, if nonempty, are contractible.
With this in mind, we call a type $A$ \emph{contractible} if it is a mere proposition and has an element $a:A$.
On the other hand, we call $A$ a \emph{set} if for all $a,b:A$, the type $a=b$ is a mere proposition.
Homotopically, these are the spaces which are equivalent to discrete ones.
More generally, $A$ is an \emph{$n$-type} if each $a=b$ is an $(n-1)$-type, with the 0-types being the sets, the $(-1)$-types the mere propositions, and the $(-2)$-types the contractible ones.
This exactly matches the traditional notion of \emph{homotopy $n$-type}.

A \emph{quasi-inverse} of a function $f:A\to B$ is a function $g:B\to A$ such that $\eta_x:x = g(f(x))$ for all $x:A$ and $\epsilon_y:f(g(y))=y$ for all $y:B$.
We say $f$ is an \emph{equivalence} if it has a quasi-inverse such that $f(\eta_x) \ct \epsilon_{f(x)} = \refl{f(x)}$ for all $x:A$.
In fact, if $f$ has a quasi-inverse, then it is an equivalence (by modifying $\epsilon$ or $\eta$); this is the usual way that we construct equivalences.
However, the type ``$f$ is an equivalence'' is better-behaved than ``$f$ has a quasi-inverse''; in particular it is a mere proposition.
We write $A\simeq B$ for the type $\sm{f:A\to B} \mathsf{isequiv}(f)$ of equivalences from $A$ to $B$.

In the formalization, we use an equivalent definition that $f:A\to B$ is an equivalence if for all $b:B$, its ``homotopy fiber'' $\sm{x:A} (f(x)=b)$ is contractible.
In some literature such functions are called ``weak equivalences'', but there is nothing weak about them, since in particular they have quasi-inverses.

The types in UF are stratified in a linearly ordered hierarchy of \emph{universes}, which are types whose elements are themselves types.
For most of the paper we avoid mentioning particular universes explicitly: we write simply ``\type'' to indicate \emph{some} universe.
This is called \emph{typical ambiguity}: universes are implicitly quantified over.
However, in \S\S\ref{sec:yoneda}--\ref{sec:rezk} we will be a little more careful.

All our universes are assumed to satisfy the univalence axiom, which says that for types $A,B:\type$ in some universe \type, the canonical map $(A=_\type B) \to (A\simeq B)$ is an equivalence.

We write $\set$ for the type $\sm{A:\type} \mathsf{isset}(A)$ of all sets (in some universe \type).
Technically, this is the type of pairs $(A,s)$ where $A$ is a type and $s$ inhabits the type ``$A$ is a set'', but since the latter type is a mere proposition, it is usually easy to ignore the distinction.
Similarly, we write $\prop \defeq \sm{A:\type} \mathsf{isprop}(A)$ for the type of all mere propositions.

One type forming operation we use in UF which is not as well-known in type theory is the \emph{propositional truncation} of a type $A$.
This is a type $\brck A$ that is a mere proposition, and has the universal property that whenever we want to prove a type $B$ (i.e.\ construct an element of $B$) assuming $\brck A$, and $B$ is a mere proposition, then we may assume $A$ instead of $\brck A$.
In the formalization, we define $\brck A$ with an impredicative encoding as
\[ \brck A \defeq \prd{P:\prop} (A\to P) \to P. \]
This depends for its correctness on an impredicativity axiom for mere propositions (every mere proposition is equivalent to one living in the smallest universe), and also lives in a higher universe level than $A$.
However, $\brck A$ can be constructed as a higher inductive type~\cite{ls:hits}, avoiding both of these issues.

In informal mathematical English, we use the adverb \emph{merely} to indicate the propositional truncation; thus for instance ``there merely exists an $x:A$ such that $P(x)$'' indicates $\bbrck{\sm{x:A} P(x)}$.
In contrast to the type-theoretic ``there exists'' which is strongly constructive, ``mere existence'' is more like the usual mathematical sort of ``there exists'' which does not imply that any particular choice of such an object has been specified.

The propositional truncation is actually the case $n=-1$ of a more general $n$-truncation operation, which makes any type $A$ into an $n$-type $\trunc n A$ in a universal way.
However, we will not have much need of the $n$-truncation for $n\ge 0$.

A function $f:A\to B$ between types is called a \emph{monomorphism} if for all $x,y:A$, the function $f:(x=y) \to (f(x)=f(y))$ is an equivalence.
If $A$ and $B$ are sets, then it is equivalent to say that for all $x,y:A$, if $f(x)=f(y)$, then $x=y$; so in this case we also say that $f$ is \emph{injective}.
Also if $A$ and $B$ are sets, we say that $f:A\to B$ is \emph{surjective} if for every $b:B$ there merely exists an $a:A$ such that $f(a)=b$.
If in this definition we leave out the adverb ``merely'', we call the resulting notion being \emph{split surjective}; in the absence of the axiom of choice the two are different.
(Type theorists are accustomed to use the phrase ``the axiom of choice'' for a provable statement which is really about commutation of dependent sums and products; in UF one can state an axiom of choice that behaves more like the familiar one in set theory.
However, we will not need any such axiom.)

\section{Categories and precategories}
\label{sec:cats}

We use a definition of category in which the arrows form a family of types indexed by the objects.
This matches the way hom-types are always used in category theory; for instance, we never even consider comparing two arrows unless we know their sources and targets agree.
Furthermore, it seems clear that for a theory of 1-categories, the hom-types should all be sets.
This leads us to the following.

\begin{defn}\label{ct:precategory}
  A \textbf{precategory} $A$ consists of the following.
  \begin{enumerate}
  \item A type $A_0$ of \emph{objects}.  We write $a:A$ for $a:A_0$.
  \item For each $a,b:A$, a set $\hom_A(a,b)$ of \emph{arrows} or \emph{morphisms}.
  \item For each $a:A$, a morphism $1_a:\hom_A(a,a)$.
  \item For each $a,b,c:A$, a function of type
    \[  \hom_A(b,c) \to \hom_A(a,b) \to \hom_A(a,c) \]
    denoted infix by $g\mapsto f\mapsto g\circ f$, or sometimes simply by $gf$.
  \item For each $a,b:A$ and $f:\hom_A(a,b)$, we have $\id f {1_b\circ f}$ and $\id f {f\circ 1_a}$.
  \item For each $a,b,c,d:A$ and $f:\hom_A(a,b)$, $g:\hom_A(b,c)$, $h:\hom_A(c,d)$, we have $\id {h\circ (g\circ f)}{(h\circ g)\circ f}$.
  \end{enumerate}
\end{defn}

The problem with the notion of precategory is that for objects $a,b:A$, we have two possibly-different notions of ``sameness''.
On the one hand, we have $\id[A_0]{a}{b}$.
But on the other hand, there is the standard categorical notion of \emph{isomorphism}.

\begin{defn}\label{ct:isomorphism}
  A morphism $f:\hom_A(a,b)$ is an \textbf{isomorphism} if there is a morphism $g:\hom_A(b,a)$ such that $\id{g\circ f}{1_a}$ and $\id{f\circ g}{1_b}$.
  We write $a\cong b$ for the type of such isomorphisms.
\end{defn}

\begin{lem}\label{ct:isoprop}
  For any $f:\hom_A(a,b)$, the type ``$f$ is an isomorphism'' is a mere proposition.
  Therefore, for any $a,b:A$ the type $a\cong b$ is a set.
\end{lem}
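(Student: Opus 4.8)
The plan is to unfold the predicate ``$f$ is an isomorphism'' into the $\Sigma$-type
\[
  \mathsf{isiso}(f) \;\defeq\; \sum_{g:\hom_A(b,a)} \bigl((g\circ f = 1_a) \times (f\circ g = 1_b)\bigr),
\]
and to prove it is a mere proposition by showing directly that any two of its elements are equal. The guiding idea is the familiar categorical fact that an inverse of a morphism is unique when it exists, together with the observation that the accompanying equational witnesses carry no further content because they live in the identity types of the hom-\emph{sets}.

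First I would record that, for each fixed $g:\hom_A(b,a)$, the fiber $(g\circ f = 1_a) \times (f\circ g = 1_b)$ is itself a mere proposition. Indeed, each $\hom_A(\blank,\blank)$ is a set by Definition~\ref{ct:precategory}, so its identity types, such as $g\circ f = 1_a$, are mere propositions by the definition of ``set''; and a product of two mere propositions is again a mere proposition. Thus $\mathsf{isiso}(f)$ is a $\Sigma$-type all of whose fibers are mere propositions, and it suffices to compare the first components of two given elements.

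Next I would establish uniqueness of the inverse. Given $(g,\eta,\epsilon)$ and $(g',\eta',\epsilon')$, where $\eta:g\circ f = 1_a$, $\epsilon:f\circ g = 1_b$ and similarly for the primed data, the standard computation
\[
  g \;=\; g\circ 1_b \;=\; g\circ(f\circ g') \;=\; (g\circ f)\circ g' \;=\; 1_a\circ g' \;=\; g',
\]
using the unit and associativity laws of Definition~\ref{ct:precategory} together with $\epsilon'$ and $\eta$, produces a path $p:g=g'$. By the characterization of equality in $\Sigma$-types recalled in \S\ref{sec:background}, to conclude $(g,\eta,\epsilon)=(g',\eta',\epsilon')$ it then remains to provide a path between the transport of $(\eta,\epsilon)$ along $p$ and the pair $(\eta',\epsilon')$; but both lie in the fiber over $g'$, which we have just seen is a mere proposition, so such a path exists automatically. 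Hence $\mathsf{isiso}(f)$ is a mere proposition.

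I do not expect a serious obstacle: the categorical content is just uniqueness of inverses, which is routine, and in fact almost exactly the argument used classically. The one conceptual point to get right is that the coherence of the witnesses $\eta$ and $\epsilon$ may be ignored, and this relies squarely on the hypothesis that the hom-types are sets; without it, the identity types $g\circ f = 1_a$ would fail to be mere propositions and one would be forced to track higher paths between them. Finally, for the second claim, $a\cong b$ is by definition $\sum_{f:\hom_A(a,b)} \mathsf{isiso}(f)$; since the base $\hom_A(a,b)$ is a set and each fiber $\mathsf{isiso}(f)$ is a mere proposition, hence in particular a set, the total space is a set by the standard closure of $n$-types under $\Sigma$.
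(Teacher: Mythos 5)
Your proposal is correct and follows essentially the same route as the paper: both reduce the claim to uniqueness of the inverse $g$ via the standard unit/associativity computation, using the fact that hom-sets are sets to dismiss the witnesses $\eta,\epsilon$, and both obtain the second claim from closure of sets under $\Sigma$ with propositional fibers. The only difference is expository: you spell out the transport step in the $\Sigma$-type equality and the closure argument, which the paper leaves implicit.
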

\begin{proof}
  Suppose given $g:\hom_A(b,a)$ and $\eta:(\id{1_a}{g\circ f})$ and $\epsilon:(\id{f\circ g}{1_b})$, and similarly $g'$, $\eta'$, and $\epsilon'$.
We must show $\id{(g,\eta,\epsilon)}{(g',\eta',\epsilon')}$.
  But since all hom-sets are sets, their identity types (in which $\eta$ and $\epsilon$ live) are mere propositions, so it suffices to show $\id g {g'}$.
  For this we have
  \[g' = 1_a\circ g' = (g\circ f)\circ g' = g\circ (f\circ g') = g\circ 1_b = g\]
  using $\eta$ and $\epsilon'$.
\end{proof}

If $f:a\cong b$, then we write $\inv f$ for its inverse, which by \autoref{ct:isoprop} is uniquely determined.

The only relationship between these two notions of sameness that we have in a precategory is the following.

\begin{lem}[\textsf{idtoiso}]\label{ct:idtoiso}
  If $A$ is a precategory and $a,b:A$, then
  \[(\id a b)\to (a \cong b).\]
\end{lem}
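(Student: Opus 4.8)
The plan is to define this map by \emph{path induction}. A term $p:\id a b$ can be analyzed by reducing to the case where $b$ is $a$ and $p$ is $\refl a$, so it suffices to specify where the reflexivity path should go. Thus I would define the function $\mathsf{idtoiso}$ by sending $\refl a$ to the identity isomorphism at $a$.

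Concretely, first I would apply induction on identity to reduce the construction to producing a single element of $a\cong a$ (for the diagonal case). Second, I would exhibit the identity morphism $1_a:\hom_A(a,a)$ as the underlying arrow and verify that it is an isomorphism in the sense of \autoref{ct:isomorphism}: take its proposed inverse to be $g\defeq 1_a$, and observe that the two required equations $\id{1_a\circ 1_a}{1_a}$ follow immediately from the unit laws in item (v) of \autoref{ct:precategory}. This assembles into a term of $a\cong a$, completing the definition on $\refl a$ and hence, by path induction, on all of $\id a b$.

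There is essentially no obstacle here beyond setting up the induction correctly. One convenient point is that, by \autoref{ct:isoprop}, the type ``$1_a$ is an isomorphism'' is a mere proposition, so no coherence conditions among the inverse and the unit witnesses need to be checked; any valid choice of the witnessing data suffices. The only thing to keep in mind is that $\mathsf{idtoiso}$ is genuinely being \emph{defined} by this induction, not merely shown to exist, since later developments (the notion of a saturated category) will refer to this specific map and ask that it be an equivalence.
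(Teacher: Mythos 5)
Your proposal is correct and matches the paper's proof exactly: both proceed by induction on identity to reduce to the diagonal case and then exhibit $1_a$ as an isomorphism (the paper simply leaves the verification via the unit laws implicit, which you spell out). Your closing remark that $\mathsf{idtoiso}$ is genuinely \emph{defined} by this induction, since \autoref{ct:category} later refers to this specific map, is also the right point to emphasize.
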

\begin{proof}
  By induction on identity, we may assume $a$ and $b$ are the same.
  But then we have $1_a:\hom_A(a,a)$, which is clearly an isomorphism.
\end{proof}

The intuitive similarity to the univalence axiom should be clear.
More precisely, we have the following:

\begin{eg}\label{ct:precatset}
  There is a precategory \uset, whose type of objects is \set, and with $\hom_{\uset}(A,B) \defeq (A\to B)$.
  The identity morphisms are identity functions and the composition is function composition.
  For this precategory, \autoref{ct:idtoiso} is equal to the restriction to sets of the canonical identity-to-equivalence map, which the univalence axiom asserts to be an equivalence.
\end{eg}

Thus, it is natural to make the following definition.

\begin{defn}\label{ct:category}
  A \textbf{category} is a precategory such that for all $a,b:A$, the function $\idtoiso_{a,b}$ from \autoref{ct:idtoiso} is an equivalence.
\end{defn}

In particular, in a category, if $a\cong b$, then $a=b$.

\begin{eg}\label{ct:eg:set}
  The univalence axiom implies immediately that \uset is a category.
  One can also show, using univalence, that any precategory of set-level structures such as groups, rings, topological spaces, etc.\ is a category; see for instance~\cite{dc:isoeq}.
\end{eg}

We also note the following.

\begin{lem}\label{ct:obj-1type}
  In a category, the type of objects is a 1-type.
\end{lem}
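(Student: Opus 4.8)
The plan is to unfold the definition of ``1-type'' and reduce the statement to a property of the identity types between objects, which can then be transported across the equivalence furnished by the category axiom. Recall that a type is a 1-type precisely when each of its identity types is a 0-type, i.e.\ a set. Hence, writing $A_0$ for the type of objects, it suffices to fix arbitrary $a,b:A$ and show that the identity type $a=b$ is a set.

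To do this I would chain together the two preceding lemmas. Since $A$ is a category, \autoref{ct:category} tells us that $\idtoiso_{a,b}$ of \autoref{ct:idtoiso} is an equivalence, so $a=b$ is equivalent to the type $a\cong b$ of isomorphisms. By \autoref{ct:isoprop}, the latter is a set. It then remains only to know that being a set is invariant under equivalence: any type equivalent to a set is itself a set. Applying this to the equivalence $(a=b)\simeq(a\cong b)$ shows that $a=b$ is a set, and since $a,b$ were arbitrary, $A_0$ is a 1-type.

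The only ingredient here that is not one of the cited results is the closure of sets under equivalence, and I expect this to be the sole (entirely routine) obstacle. It holds because an equivalence $e:(a=b)\simeq(a\cong b)$ is in particular a monomorphism, so for any $p,q:a=b$ it induces an equivalence $(p=q)\simeq(e(p)=e(q))$; the right-hand type is a mere proposition because $a\cong b$ is a set, and a type equivalent to a mere proposition is again a mere proposition. Thus $p=q$ is a mere proposition for all $p,q$, which is exactly the assertion that $a=b$ is a set.
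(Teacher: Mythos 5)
Your proof is correct and is essentially the paper's own argument: reduce to showing each identity type $a=b$ is a set, use the category axiom to identify $a=b$ with $a\cong b$, and invoke \autoref{ct:isoprop}. The closure of sets under equivalence, which you spell out, is a standard fact the paper leaves implicit.
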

\begin{proof}
  It suffices to show that for any $a,b:A$, the type $\id a b$ is a set.
  But $\id a b$ is equivalent to $a \cong b$, which is a set.
\end{proof}

We write $\isotoid$ for the inverse $(a\cong b) \to (\id a b)$ of the map $\idtoiso$ from \autoref{ct:idtoiso}.
The following relationship between the two is important.

Recall the notion of \emph{transport} along a path, denoted $\trans p z$.
Additionally, if $p:\id a a'$ and $q:\id b b'$, then we write $(p,q)$ for the induced path of type $\id{(a,b)}{(a',b')}$.

\begin{lem}\label{ct:idtoiso-trans}
  For $p:\id a a'$ and $q:\id b b'$ and $f:\hom_A(a,b)$, we have
  \begin{equation}\label{ct:idtoisocompute}
    \id{\trans{(p,q)}{f}}
    {\idtoiso(q)\circ f \circ \inv{\idtoiso(p)}}
  \end{equation}
\end{lem}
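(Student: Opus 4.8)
The plan is to proceed by path induction on both $p$ and $q$. Since the claim is parametrized by arbitrary identifications $p : \id a {a'}$ and $q : \id b {b'}$, I would apply induction first on $p$ and then on $q$, reducing to the case where $a' \jdeq a$, $b' \jdeq b$, $p \jdeq \refl a$, and $q \jdeq \refl b$. In this case the induced path $(p,q)$ is just $\refl{(a,b)}$, so it suffices to verify the equation there.

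For the reduced case I would compute the two sides independently. On the left, transport along a reflexivity path is the identity function, so $\trans{(p,q)}{f}$ becomes simply $f$. On the right, I would invoke the computation of $\idtoiso$ on reflexivity coming from its construction in \autoref{ct:idtoiso}: that map sends $\refl a$ to the identity isomorphism $1_a$, so $\idtoiso(\refl a) = 1_a$ and likewise $\idtoiso(\refl b) = 1_b$. Since $1_a$ is its own inverse (as $1_a \circ 1_a = 1_a$ witnesses), we obtain $\inv{\idtoiso(\refl a)} = 1_a$, and thus the right-hand side is $1_b \circ f \circ 1_a$.

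It then remains to simplify $1_b \circ f \circ 1_a$ to $f$, which follows directly from the left and right unit laws recorded in \autoref{ct:precategory}. Both sides therefore equal $f$, which establishes the equation in the reduced case and hence, by path induction, in general.

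As for the main obstacle: there is essentially none of substance. The entire argument is driven by the path induction, and the only points needing care are knowing how $\idtoiso$ behaves on $\refl{}$ — which is immediate from its definition — and recalling that the inverse of an identity morphism is itself. Because the hom-types are assumed to be sets, there are no higher identifications to track and no coherence obligations arise, so once the inductive setup is in place the remaining verification is routine.
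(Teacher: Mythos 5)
Your proof is correct and follows essentially the same route as the paper's: path induction on both $p$ and $q$, reduction of the left-hand side to $f$ via transport along reflexivity, computation of $\idtoiso$ on $\refl{}$ to get $1_a$ and $1_b$ (noting $\inv{1_a} = 1_a$), and the unit laws to finish. The only difference is that you spell out the step $\inv{\idtoiso(\refl a)} = 1_a$, which the paper leaves implicit.
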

\begin{proof}
  By induction, we may assume $p$ and $q$ are $\refl a$ and $\refl b$ respectively.
Then the left-hand side of~\eqref{ct:idtoisocompute} is simply $f$.
  But by definition, $\idtoiso(\refl a)$ is $1_a$, and $\idtoiso(\refl b)$ is $1_b$, so the right-hand side of~\eqref{ct:idtoisocompute} is $1_b\circ f\circ 1_a$, which is equal to $f$.
\end{proof}

Similarly, we can show
\begin{gather}
  \id{\idtoiso(\rev p)}{\inv {(\idtoiso(p))}}\\
  \id{\idtoiso(p\ct q)}{\idtoiso(q)\circ \idtoiso(p)}\\
  \id{\isotoid(f\circ e)}{\isotoid(e)\ct \isotoid(f)}
\end{gather}
and so on.

\begin{eg}\label{ct:orders}
  A precategory in which each set $\hom_A(a,b)$ is a mere proposition is equivalently a type $A_0$ equipped with a mere relation ``$\le$'' that is reflexive ($a\le a$) and transitive (if $a\le b$ and $b\le c$, then $a\le c$).
  We call this a \textbf{preorder}.

  In a preorder, a morphism $f\colon a\le b$ is an isomorphism just when there exists some proof $g\colon b\le a$.
  Thus, $a\cong b$ is the mere proposition that $a\le b$ and $b\le a$.
  Therefore, a preorder $A$ is a category just when (1) each type $a=b$ is a mere proposition, and (2) for any $a,b:A_0$ there exists a function $(a\cong b) \to (a=b)$.
  In other words, $A_0$ must be a set, and $\le$ must be antisymmetric (if $a\le b$ and $b\le a$, then $a=b$).
  We call this a \textbf{(partial) order} or a \textbf{poset}.
\end{eg}

\begin{eg}\label{ct:gaunt}
  If $A$ is a category, then $A_0$ is a set if and only if for any $a,b:A_0$, the type $a\cong b$ is a mere proposition.
  Classically, a category satisfies this condition if and only if it is equivalent to one in which every isomorphism is an identity morphism.
  A category of the latter sort is sometimes called \textbf{gaunt} (this term was introduced by Barwick and Schommer-Pries~\cite{bsp12infncats}).
\end{eg}

\begin{eg}\label{ct:discrete}
  For any 1-type $X$, there is a category with $X$ as its type of objects and with $\hom(x,y) \defeq (x=y)$.
  If $X$ is a set, we call this the \textbf{discrete} category on $X$.
  In general, we call it a \textbf{groupoid}.
\end{eg}

\begin{eg}\label{ct:fundgpd}
  For \emph{any} type $X$, there is a precategory with $X$ as its type of objects and with $\hom(x,y) \defeq \pizero{x=y}$, the 0-truncation of its identity type.

  We call this the \emph{fundamental pregroupoid} of $X$.
\end{eg}

\begin{eg}\label{ct:hoprecat}
  There is a precategory whose type of objects is \type and with $\hom(X,Y) \defeq \pizero{X\to Y}$.
  We call this the \emph{homotopy precategory of types}.
\end{eg}

\begin{rmk}\label{defn:strict}
  As suggested in the introduction, if a precategory has the property that its type $A_0$ of objects is a \emph{set}, we call it a \textbf{strict category}.
  We will not have much to say about strict categories in this paper, however.
\end{rmk}

\section{Functors and transformations}
\label{sec:transfors}

The following definitions are fairly obvious, and need no modification.

\begin{defn}\label{ct:functor}
  Let $A$ and $B$ be precategories.
  A \textbf{functor} $F:A\to B$ consists of
  \begin{enumerate}
  \item A function $F_0:A_0\to B_0$, generally also denoted $F$.
  \item For each $a,b:A$, a function $F_{a,b}:\hom_A(a,b) \to \hom_B(Fa,Fb)$, generally also denoted $F$.
  \item For each $a:A$, we have $\id{F(1_a)}{1_{Fa}}$.
  \item For each $a,b,c:A$ and $f:\hom_A(a,b)$ and $g:\hom_B(b,c)$, we have
    \[\id{F(g\circ f)}{Fg\circ Ff}.\]
  \end{enumerate}
\end{defn}

Note that by induction on identity, a functor also preserves \idtoiso.

\begin{defn}\label{ct:nattrans}
  For functors $F,G:A\to B$, a \textbf{natural transformation} $\gamma:F\to G$ consists of
  \begin{enumerate}
  \item For each $a:A$, a morphism $\gamma_a:\hom_B(Fa,Ga)$.
  \item For each $a,b:A$ and $f:\hom_A(a,b)$, we have $\id{Gf\circ \gamma_a}{\gamma_b\circ Ff}$.
  \end{enumerate}
\end{defn}

Since each type $\hom_B(Fa,Gb)$ is a set, its identity type is a mere proposition.
Thus, the naturality axiom is a mere proposition, so (invoking function extensionality) identity of natural transformations is determined by identity of their components.
In particular, for any $F$ and $G$, the type of natural transformations from $F$ to $G$ is again a set.

Similarly, identity of functors is determined by identity of the functions $A_0\to B_0$ and (transported along this) of the corresponding functions on hom-sets.

\begin{defn}\label{ct:functor-precat}
  For precategories $A,B$, there is a precategory $B^A$ defined by
  \begin{itemize}
  \item $(B^A)_0$ is the type of functors from $A$ to $B$.
  \item $\hom_{B^A}(F,G)$ is the type of natural transformations from $F$ to $G$.
  \end{itemize}
\end{defn}
\begin{proof}
  We define $(1_F)_a\defeq 1_{Fa}$.
  Naturality follows by the unit axioms of a precategory.
  For $\gamma:F\to G$ and $\delta:G\to H$, we define $(\delta\circ\gamma)_a\defeq \delta_a\circ \gamma_a$.
  Naturality follows by associativity.
  Similarly, the unit and associativity laws for $B^A$ follow from those for $B$.
\end{proof}

\begin{lem}\label{ct:natiso}
  A natural transformation $\gamma:F\to G$ is an isomorphism in $B^A$ if and only if each $\gamma_a$ is an isomorphism in $B$.
\end{lem}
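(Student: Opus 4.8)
The plan is to prove both directions of the biconditional. The statement asserts an equivalence between two properties: $\gamma$ being an isomorphism in $B^A$, and each component $\gamma_a$ being an isomorphism in $B$. Since both ``$\gamma$ is an isomorphism'' and ``each $\gamma_a$ is an isomorphism'' are mere propositions (by \autoref{ct:isoprop}, together with the fact that a dependent product of mere propositions is a mere proposition), it suffices to construct functions in both directions; no coherence between them needs to be checked.

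For the ``only if'' direction, I would start from an inverse natural transformation $\delta:G\to F$ witnessing that $\gamma$ is an isomorphism in $B^A$, so that $\delta\circ\gamma=1_F$ and $\gamma\circ\delta=1_G$. Evaluating these equalities of natural transformations at each object $a:A$, and using that identity of natural transformations is determined componentwise (as noted just before \autoref{ct:functor-precat}), together with the pointwise definitions $(\delta\circ\gamma)_a\jdeq\delta_a\circ\gamma_a$ and $(1_F)_a\jdeq 1_{Fa}$, I obtain $\delta_a\circ\gamma_a=1_{Fa}$ and $\gamma_a\circ\delta_a=1_{Ga}$. Thus $\delta_a$ is an inverse to $\gamma_a$ in $B$, so each $\gamma_a$ is an isomorphism.

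The ``if'' direction is where the only real content lies. Suppose each $\gamma_a$ is an isomorphism in $B$, with inverse $\inv{(\gamma_a)}$ (uniquely determined by \autoref{ct:isoprop}). I would define a candidate inverse $\delta:G\to F$ by setting $\delta_a\defeq\inv{(\gamma_a)}$. The main obstacle is verifying that this family of morphisms is actually \emph{natural}, i.e.\ that $\id{Ff\circ\delta_a}{\delta_b\circ Gf}$ for every $f:\hom_A(a,b)$. This is the standard ``inverse of a natural isomorphism is natural'' calculation: starting from the naturality square for $\gamma$, namely $Gf\circ\gamma_a=\gamma_b\circ Ff$, I would whisker on both sides by the appropriate inverses $\inv{(\gamma_a)}$ and $\inv{(\gamma_b)}$ and cancel using $\gamma_a\circ\inv{(\gamma_a)}=1_{Ga}$ and $\inv{(\gamma_b)}\circ\gamma_b=1_{Fb}$, which rearranges the square into the naturality condition for $\delta$. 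Since hom-types are sets, this equation is a mere proposition and the routine chain of associativity and unit laws suffices.

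Finally, with $\delta$ established as a genuine natural transformation, I would check that $\delta$ inverts $\gamma$ in $B^A$. Again using componentwise identity of natural transformations, it suffices to observe that $(\delta\circ\gamma)_a\jdeq\delta_a\circ\gamma_a=\inv{(\gamma_a)}\circ\gamma_a=1_{Fa}\jdeq(1_F)_a$ and symmetrically $(\gamma\circ\delta)_a=1_{Ga}$. Hence $\gamma$ is an isomorphism in $B^A$, completing the proof.
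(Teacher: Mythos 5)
Your proposal is correct and follows essentially the same route as the paper: extracting componentwise inverses via the definitional equality $(\delta\gamma)_a\jdeq\delta_a\gamma_a$ in one direction, and in the other defining $\delta$ componentwise from the inverses $\inv{(\gamma_a)}$, verifying naturality by the standard insert-and-cancel computation, and concluding by componentwise identity of natural transformations. The opening remark about both sides being mere propositions is harmless but not needed, since a biconditional only requires maps in each direction anyway.
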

\begin{proof}
  If $\gamma$ is an isomorphism, then we have $\delta:G\to F$ that is its inverse.
  By definition of composition in $B^A$, $(\delta\gamma)_a\jdeq \delta_a\gamma_a$ and similarly.
  Thus, $\id{\delta\gamma}{1_F}$ and $\id{\gamma\delta}{1_G}$ imply $\id{\delta_a\gamma_a}{1_{Fa}}$ and $\id{\gamma_a\delta_a}{1_{Ga}}$, so $\gamma_a$ is an isomorphism.

  Conversely, suppose each $\gamma_a$ is an isomorphism, with inverse called $\delta_a$, say.
We define a natural transformation $\delta:G\to F$ with components $\delta_a$; for the naturality axiom we have
  \[ Ff\circ \delta_a = \delta_b\circ \gamma_b\circ Ff \circ \delta_a = \delta_b\circ Gf\circ \gamma_a\circ \delta_a = \delta_b\circ Gf. \]
  Now since composition and identity of natural transformations is determined on their components, we have $\id{\gamma\delta}{1_G}$ and $\id{\delta\gamma}{1_F}$.
\end{proof}

The following result, due originally to Hofmann and Streicher~\cite{hs:gpd-typethy}, is fundamental.

\begin{thm}\label{ct:functor-cat}
  If $A$ is a precategory and $B$ is a category, then $B^A$ is a category.
\end{thm}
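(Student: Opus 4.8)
The goal is to show that for any two functors $F,G:A\to B$ the map $\idtoiso_{F,G}:(\id F G)\to(F\cong G)$ of \autoref{ct:idtoiso} is an equivalence. Since ``is an equivalence'' is a mere proposition, I plan to exhibit a quasi-inverse, which I will again call $\isotoid$. The whole point is to leverage the hypothesis that $B$ is a category, so that $\idtoiso$ is invertible on the objects $Fa,Ga$ of $B$, and to glue these object-level inverses together using function extensionality.

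Concretely, given a natural isomorphism $\gamma:F\cong G$, I would first apply \autoref{ct:natiso} to extract that each component $\gamma_a:Fa\cong Ga$ is an isomorphism in $B$. Because $B$ is a category, $\isotoid$ applies to each $\gamma_a$, yielding $p_a\defeq\isotoid(\gamma_a):\id{Fa}{Ga}$, and function extensionality then packages the family $(p_a)$ into a single identification $p:\id{F_0}{G_0}$ of the object-functions. To promote $p$ to an identity of functors I would use the fact, observed above, that an identity of functors is determined by an identity of object-functions together with a transported identity of the actions on hom-sets---the functoriality axioms contributing nothing, since they are mere propositions (the hom-types being sets). So the task reduces to checking that transporting $F$'s action on morphisms along $p$ produces $G$'s.

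This transport computation is where I expect the only real work to be. By \autoref{ct:idtoiso-trans}, transporting a morphism along $(p_a,p_b)$ conjugates it by $\idtoiso(p_b)$ and $\inv{\idtoiso(p_a)}$; and since $\idtoiso$ and $\isotoid$ are mutually inverse in $B$, these collapse to $\gamma_b$ and $\inv{\gamma_a}$. Hence the transported action carries $f:\hom_A(a,b)$ to $\gamma_b\circ Ff\circ\inv{\gamma_a}$, which the naturality of $\gamma$ rewrites as $Gf$. That is exactly the needed agreement, so this defines $\isotoid(\gamma):\id F G$. The delicate part is purely the bookkeeping here: applying \autoref{ct:idtoiso-trans} to the entire hom-function at once and keeping the orientation of the naturality square straight.

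It then remains to verify that $\isotoid$ is a genuine quasi-inverse, and both round trips decompose componentwise because identity of natural transformations, and of functors, is detected on components (once more using that hom-types are sets). On the one side, the $a$-component of $\idtoiso(\isotoid(\gamma))$ is $\idtoiso(\isotoid(\gamma_a))=\gamma_a$, so $\idtoiso(\isotoid(\gamma))=\gamma$. On the other side, path induction lets me assume $q\jdeq\refl F$, so $\idtoiso(q)=1_F$ and every component of $\isotoid(1_F)$ is $\isotoid(1_{Fa})=\refl{Fa}$, giving $\isotoid(\idtoiso(q))=q$. This shows $\idtoiso_{F,G}$ is an equivalence for all $F,G$, which is precisely the assertion that $B^A$ is a category.
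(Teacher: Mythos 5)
Your proposal is correct and follows essentially the same route as the paper's proof: construct the inverse by applying $\isotoid$ componentwise (using that $B$ is a category), glue with function extensionality, handle the hom-action by the transport computation of \autoref{ct:idtoiso-trans} plus naturality, and check both round trips componentwise using that hom-sets are sets. The only cosmetic difference is in the round trip starting from an identity $q:\id F G$, where you use path induction on $q$ directly while the paper instead proves the auxiliary fact $\idtoiso(p)_a = \idtoiso(p_a)$ by induction on $p$ and then compares object-parts; these are interchangeable.
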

\begin{proof}
  Let $F,G:A\to B$; we must show that $\idtoiso:(\id{F}{G}) \to (F\cong G)$ is an equivalence.

  To give an inverse to it, suppose $\gamma:F\cong G$ is a natural isomorphism.
  Then for any $a:A$, we have an isomorphism $\gamma_a:Fa \cong Ga$, hence an identity $\isotoid(\gamma_a):\id{Fa}{Ga}$.
  By function extensionality, we have an identity $\bar{\gamma}:\id[(A_0\to B_0)]{F_0}{G_0}$.

  Now since the last two axioms of a functor are mere propositions, to show that $\id{F}{G}$ it will suffice to show that for any $a,b:A$, the functions
  \begin{align*}
    F_{a,b}&:\hom_A(a,b) \to \hom_B(Fa,Fb)\mathrlap{\qquad\text{and}}\\
    G_{a,b}&:\hom_A(a,b) \to \hom_B(Ga,Gb)
  \end{align*}
  become equal when transported along $\bar\gamma$.
  By computation for function extensionality, when applied to $a$, $\bar\gamma$ becomes equal to $\isotoid(\gamma_a)$.
  But by \autoref{ct:idtoiso-trans}, transporting $Ff:\hom_B(Fa,Fb)$ along $\isotoid(\gamma_a)$ and $\isotoid(\gamma_b)$ is equal to the composite $\gamma_b\circ Ff\circ \inv{(\gamma_a)}$, which by naturality of $\gamma$ is equal to $Gf$.

  This completes the definition of a function $(F\cong G) \to (\id F G)$.
  Now consider the composite
  \[ (\id F G) \to (F\cong G) \to (\id F G). \]
  Since hom-sets are sets, their identity types are mere propositions, so to show that two identities $p,q:\id F G$ are equal, it suffices to show that $\id[\id{F_0}{G_0}]{p}{q}$.
  But in the definition of $\bar\gamma$, if $\gamma$ were of the form $\idtoiso(p)$, then $\gamma_a$ would be equal to $\idtoiso(p_a)$ (this can easily be proved by induction on $p$).
  Thus, $\isotoid(\gamma_a)$ would be equal to $p_a$, and so by function extensionality we would have $\id{\bar\gamma}{p}$, which is what we need.

  Finally, consider the composite
  \[(F\cong G)\to  (\id F G) \to (F\cong G). \]
  Since identity of natural transformations can be tested componentwise, it suffices to show that for each $a$ we have $\id{\idtoiso(\bar\gamma)_a}{\gamma_a}$.
  But as observed above, we have $\id{\idtoiso(\bar\gamma)_a}{\idtoiso((\bar\gamma)_a)}$, while $\id{(\bar\gamma)_a}{\isotoid(\gamma_a)}$ by computation for function extensionality.
  Since $\isotoid$ and $\idtoiso$ are inverses, we have $\id{\idtoiso(\bar\gamma)_a}{\gamma_a}$.
\end{proof}

In particular, naturally isomorphic functors between categories (as opposed to precategories) are equal.

\begin{defn}
  For functors $F:A\to B$ and $G:B\to C$, their composite $G\circ F:A\to C$ is given by
  \begin{itemize}
  \item The composite $(G_0\circ F_0) : A_0 \to C_0$
  \item For each $a,b:A$, the composite
    \[(G_{Fa,Fb}\circ F_{a,b}):\hom_A(a,b) \to \hom_C(GFa,GFb).\]
  \end{itemize}
  It is easy to check the axioms.
\end{defn}

\begin{defn}\label{def:whisker}
  For functors $F:A\to B$ and $G,H:B\to C$ and a natural transformation $\gamma:G\to H$, the composite $(\gamma F):GF\to HF$ is given by
  \begin{itemize}
  \item For each $a:A$, the component $\gamma_{Fa}$.
  \end{itemize}
  Naturality is easy to check.
  Similarly, for $\gamma$ as above and $K:C\to D$, the composite $(K\gamma):KG\to KH$ is given by
  \begin{itemize}
  \item For each $b:B$, the component $K(\gamma_b)$.
  \end{itemize}
\end{defn}

\begin{lem}\label{ct:interchange}
  For functors $F,G:A\to B$ and $H,K:B\to C$ and natural transformations $\gamma:F\to G$ and $\delta:H\to K$, we have
  \[\id{(\delta G)(H\gamma)}{(K\gamma)(\delta F)}.\]
\end{lem}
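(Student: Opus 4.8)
The plan is to reduce the statement to a componentwise check in $C^A$ and then recognize the remaining equation as a single naturality square. Both $(\delta G)(H\gamma)$ and $(K\gamma)(\delta F)$ are natural transformations $HF\to KG$ in the functor precategory $C^A$ (\autoref{ct:functor-precat}), so since identity of natural transformations is determined by identity of their components (as observed just after \autoref{ct:nattrans}), it suffices to prove the equation at each object $a:A$.

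First I would compute the two components explicitly. Using the definition of vertical composition in $C^A$ from \autoref{ct:functor-precat} together with the whiskering operations of \autoref{def:whisker}, the $a$-component of the left-hand side is
\[ \bigl((\delta G)(H\gamma)\bigr)_a = (\delta G)_a\circ (H\gamma)_a = \delta_{Ga}\circ H(\gamma_a), \]
while that of the right-hand side is
\[ \bigl((K\gamma)(\delta F)\bigr)_a = (K\gamma)_a\circ (\delta F)_a = K(\gamma_a)\circ \delta_{Fa}. \]
Both are morphisms $HFa\to KGa$ in $C$, so the goal at $a$ reduces to the equality $\id{\delta_{Ga}\circ H(\gamma_a)}{K(\gamma_a)\circ \delta_{Fa}}$.

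Finally, I would observe that this is exactly the naturality square for $\delta:H\to K$ evaluated at the morphism $\gamma_a:\hom_B(Fa,Ga)$: instantiating the naturality axiom of \autoref{ct:nattrans} for $\delta$ with $f\defeq\gamma_a$ (so that its source is $Fa$ and its target is $Ga$) yields precisely $\id{K(\gamma_a)\circ \delta_{Fa}}{\delta_{Ga}\circ H(\gamma_a)}$, which closes the goal. There is no genuinely hard step here; the only thing requiring care is the bookkeeping of sources and targets when reading off the whiskered components, after which the whole statement collapses to a single application of naturality of $\delta$. (Symmetrically, one could instead apply naturality of $\gamma$, but evaluating $\delta$ at $\gamma_a$ is the cleanest route.)
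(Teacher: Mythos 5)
Your proposal is correct and matches the paper's proof essentially verbatim: both reduce to a componentwise check, compute $((\delta G)(H\gamma))_a \jdeq \delta_{Ga}\circ H(\gamma_a)$ and $((K\gamma)(\delta F))_a \jdeq K(\gamma_a)\circ \delta_{Fa}$, and close the gap by a single application of the naturality of $\delta$ at the morphism $\gamma_a$. No differences worth noting.
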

\begin{proof}
  It suffices to check componentwise: at $a:A$ we have
  \begin{align*}
    ((\delta G)(H\gamma))_a
    &\jdeq (\delta G)_{a}(H\gamma)_a\\
    &\jdeq \delta_{Ga}\circ H(\gamma_a)\\
    &= K(\gamma_a) \circ \delta_{Fa} \hspace{2cm}\text{(by naturality of $\delta$)}\\
    &\jdeq (K \gamma)_a\circ (\delta F)_a\\
    &\jdeq ((K \gamma)(\delta F))_a.\qedhere
  \end{align*}
\end{proof}

Classically, one defines the ``horizontal composite'' of $\gamma:F\to G$ and $\delta:H\to K$ to be the common value of ${(\delta G)(H\gamma)}$ and ${(K\gamma)(\delta F)}$.
We will refrain from doing this, because while equal, these two transformations are not \emph{definitionally} equal.
This restraint also has the consequence that we can use the symbol $\circ$ (or juxtaposition) for all kinds of composition unambiguously: there is only one way to compose two natural transformations (as opposed to composing a natural transformation with a functor on either side).

\begin{lem}\label{ct:functor-assoc}
  Composition of functors is associative: $\id{H(GF)}{(HG)F}$.
\end{lem}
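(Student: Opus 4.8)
The plan is to invoke the criterion for equality of functors recorded just after \autoref{ct:nattrans}: an identity $H(GF) = (HG)F$ is determined by an identity between the two underlying object functions $A_0 \to D_0$ together with, transported along that identity, an identity between the induced functions on hom-sets. The two functoriality axioms (preservation of identities and of composition) are mere propositions, since by assumption all hom-types are sets and hence their identity types are mere propositions; so those components contribute nothing and may be ignored. Thus the whole task reduces to comparing object functions and hom-functions.

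First I would unfold the object functions. By the definition of functor composition, the object function of $H(GF)$ is $H_0 \circ (G_0 \circ F_0)$ while that of $(HG)F$ is $(H_0 \circ G_0) \circ F_0$. Since composition of ordinary functions is associative \emph{definitionally}—both sides compute to $\lambda a.\,H_0(G_0(F_0(a)))$—these two object functions are judgmentally equal, so the required identity between them may be taken to be reflexivity. Because that identity is reflexivity, transport along it is (definitionally) the identity, and the comparison of hom-functions reduces to plain equality of the two composites rather than equality-after-transport. For each $a,b:A$ these composites are $H_{GFa,GFb}\circ(G_{Fa,Fb}\circ F_{a,b})$ and $(H_{GFa,GFb}\circ G_{Fa,Fb})\circ F_{a,b}$, which again coincide definitionally by associativity of function composition.

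I do not expect a genuine obstacle here: the entire statement rests on the definitional associativity of function composition, so once the bookkeeping of the transport is dispatched no path algebra is needed. The one point demanding care—rather than difficulty—is verifying that taking the object-level identity to be reflexivity really does collapse the hom-level comparison to a bare judgmental equality; after that observation, the proof is a short unfolding of definitions.
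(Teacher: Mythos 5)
Your proposal is correct and matches the paper's own proof: the paper likewise notes that definitional associativity of function composition handles the object and hom parts, and that the functor axioms are automatic because hom-sets are sets (so their identity types are mere propositions). Your additional care about taking the object-level path to be reflexivity so that the transport collapses is exactly the bookkeeping the paper leaves implicit, and it is consistent with the paper's later remark that the resulting equality of functors is nonetheless not definitional, precisely because of the axiom components.
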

\begin{proof}
  Since composition of functions is associative, this follows immediately for the actions on objects and on homs.
  And since hom-sets are sets, the rest of the data is automatic.
\end{proof}

The equality in \autoref{ct:functor-assoc} is likewise not definitional.
(Composition of functions is definitionally associative, but the axioms that go into a functor must also be composed, and this breaks definitional associativity.)  For this reason, we need also to know about \emph{coherence} for associativity.

\begin{lem}\label{ct:pentagon}
  \autoref{ct:functor-assoc} is coherent, i.e.\ the following pentagon of equalities commutes:
  \[ \xymatrix{ & K(H(GF)) \ar[dl] \ar[dr]\\
    (KH)(GF) \ar[d] && K((HG)F) \ar[d]\\
    ((KH)G)F && (K(HG))F \ar[ll] }
  \]
\end{lem}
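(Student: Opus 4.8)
The plan is to reduce the coherence question entirely to the action on objects, where associativity of composition is strict (definitional) rather than merely propositional. The key observation, already recorded following \autoref{ct:nattrans}, is that an identity of functors $X,Y:A\to E$ is determined by an identity of their object-functions $X_0,Y_0:A_0\to E_0$ together with a (transported) identity of their actions on hom-sets, the two functoriality axioms contributing nothing since they live in the hom-sets of $E$, which are sets. Since those hom-sets are sets, for a fixed object-path the fibre of the projection — a transported identity of hom-actions together with identities of the axioms — is a mere proposition. Consequently the map
\[ \Phi \defeq \mapfunc{(\blank)_0} : (\id{X}{Y}) \to (\id{X_0}{Y_0}) \]
taking a path of functors to the induced path of object-functions is an embedding, i.e.\ has mere-propositional fibres. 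In particular, for two parallel paths $P,Q:\id{X}{Y}$, in order to prove $\id{P}{Q}$ it suffices to prove $\id{\Phi(P)}{\Phi(Q)}$.

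First I would apply this with $X\jdeq K(H(GF))$ and $Y\jdeq ((KH)G)F$, taking $P$ and $Q$ to be the two composite paths around the two sides of the pentagon. All five functors occurring as vertices have \emph{definitionally} equal actions on objects and on hom-sets, because composition of functions is definitionally associative; hence every edge is a path between functors whose underlying object-function is literally $K_0\circ H_0\circ G_0\circ F_0$, and $\id{X_0}{Y_0}$ is the loop type based at this function.

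Next I would compute $\Phi$ on each of the five edges. The three unwhiskered edges are instances of \autoref{ct:functor-assoc}, whose proof builds the identity from reflexivity on the object- and hom-actions (these being definitionally equal) and fills in the axioms automatically; thus each has object-component $\refl{}$. The two whiskered edges have the form $\mapfunc{K\circ\blank}(\alpha)$ and $\mapfunc{\blank\circ F}(\alpha')$ for associativity paths $\alpha,\alpha'$ supplied by \autoref{ct:functor-assoc}; since $(\blank)_0\circ(K\circ\blank)$ agrees with $(K_0\circ\blank)\circ(\blank)_0$, and similarly on the other side, functoriality of $\mapfunc{}$ rewrites their object-components as $\mapfunc{K_0\circ\blank}(\refl{})$ and $\mapfunc{\blank\circ F}(\refl{})$, which are again $\refl{}$. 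Because $\mapfunc{(\blank)_0}$ carries concatenation to concatenation, $\Phi(P)$ and $\Phi(Q)$ are each a concatenation of reflexivities, so both equal $\refl{}$. Hence $\id{\Phi(P)}{\Phi(Q)}$ holds, and by the embedding property we obtain $\id{P}{Q}$, which is the commutativity of the pentagon.

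I expect the only real obstacle to be the bookkeeping in the reduction step: one must check that the object-component of a \emph{transported} identity of hom-actions is well-behaved, and that $\mapfunc{}$ of the two whiskering operations commutes with the projection to object-functions. Once the problem has been pushed onto object-functions there is no genuine coherence left to verify — composition of functions is strictly associative, so the object-level pentagon is degenerate and both routes collapse to $\refl{}$. This is the precise sense in which strictness of the underlying data trivializes Mac Lane's pentagon in this setting.
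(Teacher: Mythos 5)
Your proof is correct and is essentially the paper's own argument: the paper disposes of the pentagon by noting that, as in \autoref{ct:functor-assoc}, the statement is evident on the actions on objects (where composition of functions is definitionally associative) and ``the rest is automatic'' since hom-sets are sets. Your write-up simply makes explicit the two points the paper leaves implicit --- that $\mapfunc{(\blank)_0}$ is an embedding on identity types of functors, and that all five edges project to $\refl{}$ on object-functions --- so it is a faithful, more detailed rendering of the same proof.
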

\begin{proof}
  As in \autoref{ct:functor-assoc}, this is evident for the actions on objects, and the rest is automatic.
\end{proof}

We will henceforth abuse notation by writing $H\circ G\circ F$ or $HGF$ for either $H(GF)$ or $(HG)F$, transporting along \autoref{ct:functor-assoc} whenever necessary.
We have a similar coherence result for units.

\begin{lem}\label{ct:units}
  For a functor $F:A\to B$, we have equalities $\id{(1_B\circ F)}{F}$ and $\id{(F\circ 1_A)}{F}$, such that given also $G:B\to C$, the following triangle of equalities commutes.
  \[ \xymatrix{
    G\circ (1_B \circ F) \ar[rr] \ar[dr] &&
    (G\circ 1_B)\circ F \ar[dl] \\
    & G \circ F.}
  \]
\end{lem}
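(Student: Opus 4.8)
The plan is to reuse the method of \autoref{ct:functor-assoc} and \autoref{ct:pentagon}, whose key ingredient is that an equality of functors is pinned down by its actions on objects and on hom-sets, the two remaining functor axioms being mere propositions (since hom-sets are sets) and hence carrying no data. First I would construct the two unit equalities. On the action on objects we have $(1_B\circ F)_0 \jdeq \idfunc[B_0]\circ F_0$, which equals $F_0$ by the unit law for composition of functions, and likewise $(F\circ 1_A)_0 \jdeq F_0\circ\idfunc[A_0]$ equals $F_0$; the same computation applies verbatim to the action on hom-sets. Since the identity-preservation and composition-preservation axioms live in mere propositions, the equalities $\id{(1_B\circ F)}{F}$ and $\id{(F\circ 1_A)}{F}$ then follow automatically.

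For the coherence triangle I would argue as in \autoref{ct:pentagon}. The three arrows are: associativity (\autoref{ct:functor-assoc}) along the top; the arrow which applies the unit equality $\id{(G\circ 1_B)}{G}$ inside $(-)\circ F$ along the lower-left; and the arrow which applies $\id{(1_B\circ F)}{F}$ inside $G\circ(-)$ along the lower-right. I want to show that the top arrow followed by the lower-left one equals the lower-right one, as a pair of parallel equalities of type $\id{G\circ(1_B\circ F)}{G\circ F}$. The reduction step is that any two parallel equalities of functors which agree on the action on objects are themselves equal: discarding the propositional axiom data, a functor is a pair consisting of an object-function in $A_0\to C_0$ together with a hom-family living in a \emph{set}, so by the characterization of identity types of $\Sigma$-types the type of equalities between two fixed functors embeds, via the action on objects, into the type of equalities of their object-functions. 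It therefore suffices to check that the triangle commutes after applying the action-on-objects projection, where it becomes the corresponding triangle for plain function composition among $G_0\circ(\idfunc[B_0]\circ F_0)$, $(G_0\circ\idfunc[B_0])\circ F_0$, and $G_0\circ F_0$; this holds because the associativity and unit equalities for function composition are all reflexivities, so the triangle of functions commutes trivially.

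The main obstacle is conceptual rather than computational: it is isolating and justifying the right recognition principle for \emph{two}-dimensional equalities of functors---namely that an equality of equalities of functors is determined by the induced equality of object-functions, because the hom-part of a functor inhabits a set and the axiom-part a mere proposition. This is the genuinely load-bearing fact, already implicit in \autoref{ct:functor-assoc} and \autoref{ct:pentagon}; once it is in hand, everything else is a direct unwinding. A secondary point requiring care is the bookkeeping: matching each of the three displayed arrows to the correct equality (associativity versus the two distinct unit laws) and to the correct side of whiskering, so that their endpoints agree and they compose in the intended direction.
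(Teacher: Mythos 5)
Your proof is correct and follows exactly the route the paper intends: the paper in fact states \autoref{ct:units} without any proof, implicitly appealing to the same argument as \autoref{ct:functor-assoc} and \autoref{ct:pentagon} (everything is definitional on the actions on objects, and ``the rest is automatic'' since hom-sets are sets). Your explicit formulation of the load-bearing recognition principle---that parallel equalities of functors agreeing on their object-parts are equal, because the hom-part of a functor inhabits a set and the axiom-part a mere proposition---is precisely the fact that the paper's terse pentagon proof is gesturing at, so your write-up is a faithful (and more detailed) rendering of the intended argument.
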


\section{Adjunctions}
\label{sec:adjunctions}

We take as our definition of adjunction the purely diagrammatic one in terms of a unit and counit natural transformation.

\begin{defn}\label{def:adjoint}
  A functor $F:A\to B$ is a \textbf{left adjoint} if there exists
  \begin{itemize}
  \item A functor $G:B\to A$.
  \item A natural transformation $\eta:1_A \to GF$.
  \item A natural transformation $\epsilon:FG\to 1_B$.
  \item $\id{(\epsilon F)(F\eta)}{1_F}$.
  \item $\id{(G\epsilon)(\eta G)}{1_G}$.
  \end{itemize}
\end{defn}

\begin{lem}\label{ct:adjprop}
  If $A$ is a category (but $B$ may be only a precategory), then the type ``$F$ is a left adjoint'' is a mere proposition.
\end{lem}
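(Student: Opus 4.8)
The plan is to peel off the parts of the structure that are manifestly propositional and then reduce the remaining data to a statement about a functor category, which is a category by \autoref{ct:functor-cat}.

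First I would note that the last two clauses of \autoref{def:adjoint}, namely $\id{(\epsilon F)(F\eta)}{1_F}$ and $\id{(G\epsilon)(\eta G)}{1_G}$, are equalities of natural transformations. Since all hom-sets are sets, the type of natural transformations between two fixed functors is a set, so these identity types are mere propositions. Hence ``$F$ is a left adjoint'' is a $\Sigma$-type whose fibre over each triple $(G,\eta,\epsilon)$ is a mere proposition, and it suffices to show that any two left-adjoint structures $(G,\eta,\epsilon)$ and $(G',\eta',\epsilon')$ have equal underlying triples. By the characterization of paths in $\Sigma$-types, this amounts to producing a path $p:G=G'$ together with identifications witnessing that $\eta$ and $\epsilon$, transported along $p$, become $\eta'$ and $\epsilon'$.

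Next I would construct $p$. From the two structures I build the classical comparison transformation $\gamma:G\to G'$ with components $\gamma_b\defeq G'(\epsilon_b)\circ \eta'_{Gb}$ and its candidate inverse $\delta_b\defeq G(\epsilon'_b)\circ \eta_{G'b}$; using naturality of $\eta$ and $\eta'$ together with the triangle identities of \emph{both} structures, one checks that $\gamma$ is natural and that $\gamma$ and $\delta$ are mutually inverse, so that $\gamma$ is an isomorphism in the functor precategory whose objects are functors $B\to A$. Since $A$ is a category, \autoref{ct:functor-cat} shows this functor precategory is itself a category, so $\idtoiso$ is an equivalence and $\gamma$ determines a path $p\defeq \isotoid(\gamma)$ with $\idtoiso(p)=\gamma$.

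Finally I would verify that transporting $\eta$ and $\epsilon$ along $p$ yields $\eta'$ and $\epsilon'$. Because natural transformations form a set, each of these is a mere proposition and may be checked componentwise; the transport of a morphism along $p=\isotoid(\gamma)$ is computed by conjugating with the components $\gamma_b$, exactly in the manner of \autoref{ct:idtoiso-trans}, and the resulting equalities reduce to the defining formulas for $\gamma$ and $\delta$ together with naturality. The transported triangle-identity proofs then agree automatically, as they inhabit a mere proposition. I expect the main obstacle to be precisely this last compatibility step: arranging the transport computation for the unit and counit so that conjugation by $\gamma$ collapses, via the triangle identities, onto the primed data. The construction and invertibility of $\gamma$ (the ``uniqueness of adjoints'' calculation) is routine but is where the triangle identities are actually consumed, and the passage from the isomorphism $\gamma$ to the path $p$ is exactly where the hypothesis that $A$ is a category (hence that the functor category is univalent) enters.
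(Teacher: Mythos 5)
Your proposal is correct and follows essentially the same route as the paper's own proof: the same comparison transformations $\gamma=(G'\epsilon)(\eta' G)$ and $\delta=(G\epsilon')(\eta G')$, the same appeal to \autoref{ct:functor-cat} to turn the natural isomorphism $G\cong G'$ into a path, the same transport computation via \autoref{ct:idtoiso-trans} for the unit and counit, and the same observation that the triangle identities transport automatically because hom-sets are sets. The only difference is presentational: you phrase the calculations componentwise, where the paper works with whiskered composites and the interchange law (\autoref{ct:interchange}).
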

\begin{proof}
  Suppose given $(G,\eta,\epsilon)$ with the triangle identities and also $(G',\eta',\epsilon')$.
  Define $\gamma:G\to G'$ to be $(G'\epsilon)(\eta' G)$, and $\delta:G'\to G$ to be $(G\epsilon')(\eta G')$.
  Then
  \begin{align*}
    \delta\gamma &=
    (G\epsilon')(\eta G')(G'\epsilon)(\eta'G)\\
    &= (G\epsilon')(G F G'\epsilon)(\eta G' F G)(\eta'G)\\
    &= (G\epsilon)(G\epsilon'FG)(G F \eta' G)(\eta G)\\
    &= (G\epsilon)(\eta G)\\
    &= 1_G
  \end{align*}
  using \autoref{ct:interchange} and the triangle identities.
  Similarly, we show $\id{\gamma\delta}{1_{G'}}$, so $\gamma$ is a natural isomorphism $G\cong G'$.
  By \autoref{ct:functor-cat}, we have an identity $\id G {G'}$.

  Now we need to know that when $\eta$ and $\epsilon$ are transported along this identity, they become equal to $\eta'$ and $\epsilon'$.
  By \autoref{ct:idtoiso-trans}, this transport is given by composing with $\gamma$ or $\delta$ as appropriate.
  For $\eta$, this yields
  \begin{equation*}
    (G'\epsilon F)(\eta'GF)\eta
    = (G'\epsilon F)(G'F\eta)\eta'
    = \eta'
  \end{equation*}
  using \autoref{ct:interchange} and the triangle identity.
  The case of $\epsilon$ is similar.
  Finally, the triangle identities transport correctly automatically, since hom-sets are sets.
\end{proof}

In \S\ref{sec:yoneda} we will mention another way to prove \autoref{ct:adjprop}.

\section{Equivalences}
\label{sec:equivalences}

It is usual to define an equivalence of categories to be a functor $F:A\to B$ for which there exists a functor $G:B\to A$ and natural isomorphisms $F\circ G \cong 1_B$ and $G\circ F \cong 1_A$.
However, because of the ``proof-relevant'' or ``constructive'' nature of ``there exists'' (dependent sum types) in UF, this definition does not produce a well-behaved \emph{type of equivalences} between two categories.
The solution is not surprising to a category theorist: whenever equivalences are ill-behaved, it usually suffices to consider \emph{adjoint} equivalences instead.
(This is exactly the same problem and solution as is encountered in the definition of equivalence of \emph{types} in UF.)

\begin{defn}
  A functor $F:A\to B$ is an \textbf{equivalence of (pre)categories} if it is a left adjoint for which $\eta$ and $\epsilon$ are isomorphisms.
  We write $A\simeq B$ for the type of equivalences of categories from $A$ to $B$.
\end{defn}

By \autoref{ct:adjprop} and \autoref{ct:isoprop}, if $A$ is a category, then the type ``$F:A\to B$ is an equivalence of precategories'' is a mere proposition.

\begin{lem}\label{ct:adjointification}
  If for $F:A\to B$ there exists $G:B\to A$ and isomorphisms $GF\cong 1_A$ and $FG\cong 1_B$, then $F$ is an equivalence of precategories.
\end{lem}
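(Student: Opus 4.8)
The plan is to massage the given data into an \emph{adjoint} equivalence, i.e.\ to produce a unit and counit satisfying the triangle identities of \autoref{def:adjoint} while keeping both isomorphisms. First I would use \autoref{ct:natiso} to unpack the hypotheses: an isomorphism $GF \cong 1_A$ in $A^A$ is precisely a natural transformation $\eta : 1_A \to GF$ each of whose components $\eta_a : a \cong GFa$ is an isomorphism, and similarly $FG \cong 1_B$ yields a natural $\epsilon : FG \to 1_B$ with each $\epsilon_b$ an isomorphism. These $\eta$ and $\epsilon$ need not satisfy the triangle identities, so they do not yet exhibit $F$ as a left adjoint in the sense of \autoref{def:adjoint}.

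The standard trick is to keep $\eta$ and repair $\epsilon$. I would define a new counit $\epsilon' : FG \to 1_B$ by
\[ \epsilon'_b \defeq \epsilon_b \circ F(\eta_{Gb})^{-1} \circ (\epsilon_{FGb})^{-1}, \]
which is again natural (a vertical composite of whiskered copies of $\eta^{-1}$ and $\epsilon^{\pm1}$) and whose components are isomorphisms, being composites of isomorphisms; hence $\epsilon'$ is again an isomorphism $FG \cong 1_B$ by \autoref{ct:natiso}. It then remains to verify the two triangle identities for the triple $(G,\eta,\epsilon')$.

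The heart of the argument is the first triangle identity $\id{(\epsilon' F)(F\eta)}{1_F}$. Checking it componentwise at $a:A$, I would expand $\epsilon'_{Fa}\circ F\eta_a$ and simplify using naturality. The key observation is that naturality of $\eta$ applied to the morphism $\eta_a : a \to GFa$ gives $\eta_{GFa}\circ\eta_a = GF(\eta_a)\circ\eta_a$, and cancelling the isomorphism $\eta_a$ yields the identity $\eta_{GFa} = GF(\eta_a)$. Feeding this, together with naturality of $\epsilon$ at the morphism $F\eta_a$, into the expansion collapses the composite to $1_{Fa}$. I expect this bookkeeping---keeping the various whiskered copies of $\eta$ and $\epsilon$ straight---to be the main obstacle, even though each individual step is just naturality plus cancellation of an isomorphism.

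For the second triangle identity $\id{(G\epsilon')(\eta G)}{1_G}$, I would avoid a second long computation by observing that, since $\eta$ is a natural isomorphism, $F$ is faithful: from $\eta_{a'}\circ k = GF(k)\circ\eta_a$ one recovers $k$ from $F(k)$ by cancelling the isomorphisms $\eta_a,\eta_{a'}$. Applying $F$ to the component $G\epsilon'_b\circ\eta_{Gb}$ and simplifying using the first triangle identity (at $Gb$) and naturality of $\epsilon'$ at $\epsilon'_b$ shows it is sent to $1_{FGb} = F(1_{Gb})$; faithfulness then forces $G\epsilon'_b\circ\eta_{Gb} = 1_{Gb}$. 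With both triangle identities established and $\eta,\epsilon'$ isomorphisms, the triple $(G,\eta,\epsilon')$ exhibits $F$ as a left adjoint whose unit and counit are isomorphisms, which is exactly the definition of an equivalence of precategories.
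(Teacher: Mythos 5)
Your proposal is correct and matches the paper's proof essentially step for step: the same rectified counit $\epsilon'_b = \epsilon_b \circ F(\eta_{Gb})^{-1} \circ (\epsilon_{FGb})^{-1}$, the same key identity $\eta_{GFa} = GF(\eta_a)$ obtained by cancelling $\eta_a$ in the naturality square, and the same componentwise computation for the first triangle identity. The only cosmetic difference is the second triangle identity, which you finish via faithfulness of $F$ (itself derived from $\eta$ being a natural isomorphism), whereas the paper applies $G$, uses naturality of $\eta$ and of $\epsilon'$, and cancels the isomorphism $GFG(\epsilon'_b)$ --- the same ingredients arranged slightly differently.
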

\begin{proof}
  We can repeat the standard proof that any equivalence of categories gives rise to an adjoint equivalence.
  First note that for any $a:A$ we have
  \begin{equation}
    \eta_{GFa} = GF(\eta_a).\label{eq:gfeta}
  \end{equation}
  This follows by cancelling $\eta_a$ in the naturality condition $\eta_{GFa} \circ \eta_a = GF(\eta_a) \circ \eta_a$.

  Now, given $G$ and $\eta:FG\cong 1_B$ and $\epsilon : 1_A \cong GF$, we define $\epsilon'$ by
  \[ \epsilon'_b \defeq
  \epsilon_b \circ
  F(\eta_{Gb})^{-1} \circ
  (\epsilon_{FGb})^{-1}.
  \]
  This is evidently a natural isomorphism.
  Then we have
  \begin{align*}
    \epsilon'_{Fa} \circ F\eta_{a}
    &= \epsilon_{Fa} \circ F(\eta_{GFa})^{-1} \circ (\epsilon_{FGFa})^{-1} \circ F\eta_a\\
    &= \epsilon_{Fa} \circ FGF(\eta_{a})^{-1} \circ (\epsilon_{FGFa})^{-1} \circ F\eta_a\\
    &= \epsilon_{Fa} \circ (\epsilon_{Fa})^{-1} \circ F(\eta_{a})^{-1} \circ F\eta_a\\
    &= 1_{Fa}.
  \end{align*}
  using~\eqref{eq:gfeta} and naturality of $\epsilon$.
  For the other identity $G(\epsilon_b) \circ \eta_{Gb} = 1_{Gb}$, it suffices to show $G(\epsilon_b) \circ GFG(\epsilon_b) = \eta_{Gb}^{-1} \circ GFG(\epsilon_b)$.
  But we have
  \begin{align*}
    \eta_{Gb}^{-1} \circ GFG(\epsilon'_b)
    &= G(\epsilon'_b) \circ \eta_{GFGb}^{-1}\\
    &= G(\epsilon'_b) \circ GF(\eta_{Gb})^{-1}\\
    &= G(\epsilon'_b) \circ G(\epsilon'_{FGb})\\
    &= G(\epsilon'_b) \circ GFG(\epsilon'_b)
  \end{align*}
  using naturality of $\eta$,~\eqref{eq:gfeta}, the previous identity, and naturality of $\epsilon'$.
\end{proof}

We now investigate some alternative definitions of equivalences of categories.

\begin{defn}
  We say a functor $F:A\to B$ is \textbf{faithful} if for all $a,b:A$, the function
  \[F_{a,b}:\hom_A(a,b) \to \hom_B(Fa,Fb)\]
  is injective, and \textbf{full} if for all $a,b:A$ this function is surjective.
  If it is both (hence each $F_{a,b}$ is an equivalence) we say $F$ is \textbf{fully faithful}.
\end{defn}

\begin{defn}
  We say a functor $F:A\to B$ is \textbf{split essentially surjective} if for all $b:B$ there exists an $a:A$ such that $Fa\cong b$.
\end{defn}

The reason for the adjective \emph{split} is that because of the strong type-theoretic meaning of ``there exists'', such a functor comes with a function assigning a specified $a$ for every $b$.
This has the following advantage.

\begin{lem}\label{ct:ffeso}
  For any precategories $A$ and $B$ and functor $F:A\to B$, the following types are equivalent.
  \begin{enumerate}
  \item $F$ is an equivalence of precategories.\label{item:ct:ffeso1}
  \item $F$ is fully faithful and split essentially surjective.\label{item:ct:ffeso2}
  \end{enumerate}
\end{lem}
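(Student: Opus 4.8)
The plan is to prove the two implications separately, establishing an equivalence of types by showing each direction and then checking the maps are mutually inverse (or, since both sides are mere propositions when suitable hypotheses hold, simply that each implies the other). Let me sketch both directions.

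\medskip

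\textbf{From \eqref{item:ct:ffeso1} to \eqref{item:ct:ffeso2}.} Suppose $F$ is an equivalence of precategories, so we are given $G:B\to A$, natural isomorphisms $\eta:1_A\cong GF$ and $\epsilon:FG\cong 1_B$, and the triangle identities. First I would show $F$ is split essentially surjective: for each $b:B$, take $a\defeq Gb$, and then $\epsilon_b:FGb\cong b$ is the required isomorphism $Fa\cong b$, so the specified choice is simply $b\mapsto (Gb,\epsilon_b)$. Next I would show $F$ is fully faithful, i.e.\ that each $F_{a,b}:\hom_A(a,b)\to\hom_B(Fa,Fb)$ is an equivalence of sets. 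The standard argument gives an inverse: send $g:\hom_B(Fa,Fb)$ to the composite
\[ \inv{(\eta_a)}\circ G(g)\circ \eta_a : \hom_A(a,b), \]
using $\eta_a:a\cong GFa$. To check this is a two-sided inverse to $F_{a,b}$ one computes, for $f:\hom_A(a,b)$, that $\inv{(\eta_a)}\circ GF(f)\circ\eta_a = f$ by naturality of $\eta$, and conversely that $F$ of the inverse applied to $g$ recovers $g$, using naturality of $\epsilon$ together with the triangle identity relating $F\eta$ and $\epsilon F$. These are routine diagram chases once the naturality squares and triangle identities are written out.

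\medskip

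\textbf{From \eqref{item:ct:ffeso2} to \eqref{item:ct:ffeso1}.} Conversely, suppose $F$ is fully faithful and split essentially surjective. Split essential surjectivity provides, for each $b:B$, an object $G_0 b:A$ together with an isomorphism $\epsilon_b : F(G_0 b)\cong b$; this defines the object part of the putative inverse functor. To make $G$ into a functor I would use full faithfulness: given $g:\hom_B(b,b')$, the morphism $\inv{(\epsilon_{b'})}\circ g\circ \epsilon_b : \hom_B(FG_0b, FG_0 b')$ has a \emph{unique} preimage under $F_{G_0b,G_0b'}$ (since that map is an equivalence), and I define $G(g)$ to be this preimage. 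Functoriality of $G$ (preservation of identities and composites) then follows from faithfulness, since $F$ of both sides agree by construction. The isomorphisms $\epsilon_b$ assemble into a natural isomorphism $\epsilon:FG\cong 1_B$ by the very definition of $G$ on morphisms. It then remains to produce $\eta:1_A\cong GF$; one defines $\eta_a:\hom_A(a,GFa)$ as the unique morphism with $F(\eta_a)=\inv{(\epsilon_{Fa})}$, which is an isomorphism because $F$ is fully faithful and reflects isomorphisms. Having $G,\eta,\epsilon$ with $\eta,\epsilon$ isomorphisms, \autoref{ct:adjointification} upgrades this data to a genuine adjoint equivalence, which is exactly what \eqref{item:ct:ffeso1} asserts. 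Invoking \autoref{ct:adjointification} here is what lets me avoid verifying the triangle identities directly.

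\medskip

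\textbf{The equivalence of types.} To conclude that the two types are \emph{equivalent} (not merely logically equivalent), I would ideally check that the maps constructed above are mutually inverse. The cleanest route is to observe that when $A$ is a category, both types are mere propositions---\eqref{item:ct:ffeso1} by the remark following \autoref{ct:adjprop}, and \eqref{item:ct:ffeso2} because fully-faithfulness is a family of \isequiv-predicates and split essential surjectivity can be shown to be a proposition when $A$ is a category---so logical equivalence suffices. For general precategories this shortcut is unavailable, and the main obstacle I anticipate is verifying that the round-trips are identities on the nose: in particular, that reconstructing the adjoint data from $(G,\eta,\epsilon)$ via fully-faithfulness-and-essential-surjectivity and back recovers the original $G$, $\eta$, and $\epsilon$ up to the relevant identity types. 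This bookkeeping---tracking how the specified $a$ in essential surjectivity corresponds to $Gb$, and how the unique preimages defining $G$ on morphisms match the original $G$---is the delicate part, and is where I would spend the bulk of the care, using that hom-sets are sets so that all coherence data is automatic and only the object- and morphism-level equalities need explicit attention.
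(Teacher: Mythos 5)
Your constructions in both directions coincide with the paper's: the same inverse to $F_{a,b}$ (though note your formula $\inv{(\eta_a)}\circ G(g)\circ \eta_a$ has the wrong outer index and does not typecheck; it must be $\inv{(\eta_b)}\circ G(g)\circ \eta_a$), the same splitting $b\mapsto (Gb,\epsilon_b)$, and in the converse direction the same $G_0$, the same $G$ on morphisms via unique preimages under $F$, and the same $\eta_a$ characterized by $F\eta_a=\inv{(\epsilon_{Fa})}$. The problem is what you leave undone. For general precategories neither type in the statement is a mere proposition (the paper remarks on this immediately after the lemma), so the round-trip identities are not ``bookkeeping'': they are the entire content that distinguishes the claimed equivalence of types from a mere logical equivalence, and your proposal stops exactly at that point. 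Your fallback (both sides are propositions when $A$ is a category) is, as you yourself note, unavailable here, so the proof as written is incomplete at its essential step.

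Moreover, your one genuine departure from the paper---invoking \autoref{ct:adjointification} to avoid verifying the triangle identities---makes the deferred step harder, not easier. The paper checks the triangle identities directly for the constructed $(G,\eta,\epsilon)$, which is nearly free because $\eta$ is \emph{defined} by the equation $F\eta_a=\inv{(\epsilon_{Fa})}$, i.e.\ by one of the triangle identities; this is precisely what makes both round trips evidently the identity (in \ref{item:ct:ffeso1}$\to$\ref{item:ct:ffeso2}$\to$\ref{item:ct:ffeso1} one recovers $G$ on morphisms from naturality of $\epsilon$, recovers $\epsilon$ on the nose, and recovers $\eta$ from the triangle identity; in \ref{item:ct:ffeso2}$\to$\ref{item:ct:ffeso1}$\to$\ref{item:ct:ffeso2} only the splitting needs checking, fully faithfulness being a proposition). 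If instead your map \ref{item:ct:ffeso2}$\to$\ref{item:ct:ffeso1} routes through \autoref{ct:adjointification}, its output is $(G,\eta,\epsilon')$ with the modified counit $\epsilon'_b=\epsilon_b\circ F(\inv{(\eta_{Gb})})\circ\inv{(\epsilon_{FGb})}$, so to see that the round trip on \ref{item:ct:ffeso2} is the identity you must prove $\epsilon'_b=\epsilon_b$, which is exactly the identity $F\eta_{Gb}=\inv{(\epsilon_{FGb})}$ you invoked the lemma to avoid---now buried inside a proof you would have to unfold. The shortcut saves nothing; verify the two triangle identities for your $(G,\eta,\epsilon)$ directly, and then carry out the two (short) round-trip computations, as the paper does.
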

\begin{proof}
  Suppose $F$ is an equivalence of precategories, with $G,\eta,\epsilon$ specified.
  Then we have the function
  \begin{equation*}
    \begin{array}{rcl}
      \hom_B(Fa,Fb) &\to& \hom_A(a,b)\\
      g &\mapsto& \inv{\eta_b}\circ G(g)\circ \eta_a.
    \end{array}
  \end{equation*}
  For $f:\hom_A(a,b)$, we have
  \[ \inv{\eta_{b}}\circ G(F(f))\circ \eta_{a}  =
  \inv{\eta_{b}} \circ \eta_{b} \circ f=
  f
  \]
  while for $g:\hom_B(Fa,Fb)$ we have
  \begin{align*}
    F(\inv{\eta_b} \circ G(g)\circ\eta_a)
    &= F(\inv{\eta_b})\circ F(G(g))\circ F(\eta_a)\\
    &= \epsilon_{Fb}\circ F(G(g))\circ F(\eta_a)\\
    &= g\circ\epsilon_{Fa}\circ F(\eta_a)\\
    &= g
  \end{align*}
  using naturality of $\epsilon$, and the triangle identities twice.
  Thus, $F_{a,b}$ is an equivalence, so $F$ is fully faithful.
  Finally, for any $b:B$, we have $Gb:A$ and $\epsilon_b:FGb\cong b$.

  On the other hand, suppose $F$ is fully faithful and split essentially surjective.
  Define $G_0:B_0\to A_0$ by sending $b:B$ to the $a:A$ given by the specified essential splitting, and write $\epsilon_b$ for the likewise specified isomorphism $FGb\cong b$.

  Now for any $g:\hom_B(b,b')$, define $G(g):\hom_A(Gb,Gb')$ to be the unique morphism such that $\id{F(G(g))}{\inv{(\epsilon_{b'})}\circ g \circ \epsilon_b }$ (which exists since $F$ is fully faithful).
  Finally, for $a:A$ define $\eta_a:\hom_A(a,GFa)$ to be the unique morphism such that $\id{F\eta_a}{\inv{\epsilon_{Fa}}}$.
  It is easy to verify that $G$ is a functor and that $(G,\eta,\epsilon)$ exhibit $F$ as an equivalence of precategories.

  Now consider the composite~\ref{item:ct:ffeso1}$\to$\ref{item:ct:ffeso2}$\to$\ref{item:ct:ffeso1}.
  We clearly recover the same function $G_0:B_0 \to A_0$.
  For the action of $G$ on hom-sets, we must show that for $g:\hom_B(b,b')$, $G(g)$ is the (necessarily unique) morphism such that $F(G(g)) = \inv{(\epsilon_{b'})}\circ g \circ \epsilon_b$.
  But this equation holds by the assumed naturality of $\epsilon$.
  We also clearly recover $\epsilon$, while $\eta$ is uniquely characterized by $\id{F\eta_a}{\inv{\epsilon_{Fa}}}$ (which is one of the triangle identities assumed to hold in the structure of an equivalence of precategories).
  Thus, this composite is equal to the identity.

  Finally, consider the other composite~\ref{item:ct:ffeso2}$\to$\ref{item:ct:ffeso1}$\to$\ref{item:ct:ffeso2}.
  Since being fully faithful is a mere proposition, it suffices to observe that we recover, for each $b:B$, the same $a:A$ and isomorphism $F a \cong b$.
  But this is clear, since we used this function and isomorphism to define $G_0$ and $\epsilon$ in~\ref{item:ct:ffeso1}, which in turn are precisely what we used to recover~\ref{item:ct:ffeso2} again.
  Thus, the composites in both directions are equal to identities, hence we have an equivalence \eqv{\ref{item:ct:ffeso1}}{\ref{item:ct:ffeso2}}.
\end{proof}

However, if $B$ is not a category, then neither type in \autoref{ct:ffeso} may necessarily be a mere proposition.
Moreover, classically, one usually defines ``essentially surjective'' without specifying the witnesses in a determinate way.
In UF, the appropriate version of this definition is the following.

\begin{defn}
  A functor $F:A\to B$ is \textbf{essentially surjective} if for all $b:B$, there \emph{merely} exists an $a:A$ such that $Fa\cong b$.
  We say $F$ is a \textbf{weak equivalence} if it is fully faithful and essentially surjective.
\end{defn}

Being a weak equivalence is \emph{always} a mere proposition, since a function being an equivalence of types is such, and the propositional truncation is so by definition.
For categories, however, there is no difference between equivalences and weak ones.

\begin{lem}\label{ct:catweq}
  If $F:A\to B$ is fully faithful and $A$ is a category, then for any $b:B$ the type $\sm{a:A} (Fa\cong b)$ is a mere proposition.
  Hence if $A$ and $B$ are categories, then the types ``$F$ is an equivalence'' and ``$F$ is a weak equivalence'' are equivalent (and mere propositions).
\end{lem}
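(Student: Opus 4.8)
The plan is to prove the first assertion — that $\sm{a:A} (Fa\cong b)$ is a mere proposition — directly, and then to obtain the second assertion as a formal consequence of it together with \autoref{ct:ffeso}.

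For the first assertion, I would take two inhabitants $(a,f)$ and $(a',f')$, where $f:Fa\cong b$ and $f':Fa'\cong b$, and prove $\id{(a,f)}{(a',f')}$. By the characterization of paths in a $\Sigma$-type this amounts to exhibiting $p:\id a {a'}$ together with a proof that $\id{\trans p f}{f'}$. To build $p$, observe that $\inv{f'}\circ f$ is an isomorphism $Fa\cong Fa'$ in $B$. Since $F$ is fully faithful it reflects isomorphisms — given the inverse morphism $\inv f\circ f'$, full faithfulness produces a preimage and faithfulness forces it to be a two-sided inverse — so there is a unique isomorphism $g:a\cong a'$ with $\id{Fg}{\inv{f'}\circ f}$. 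Because $A$ is a category, $\idtoiso$ is an equivalence, and I set $p\defeq\isotoid(g)$, which gives $\id{\idtoiso(p)}{g}$.

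It remains to verify the transport identity $\id{\trans p f}{f'}$, which I expect to be the crux of the argument. Since being an isomorphism is a mere proposition (\autoref{ct:isoprop}), it suffices to compare the underlying morphisms. The transport is taken in the family $a\mapsto\hom_B(Fa,b)$; as this family is reindexed through $F$, the standard computation of transport in a reindexed family rewrites it as the transport of $f$ along $\mapfunc F(p):\id{Fa}{Fa'}$ in the family $x\mapsto\hom_B(x,b)$. Now \autoref{ct:idtoiso-trans}, applied with the target path trivial, gives $\id{\trans p f}{f\circ\inv{\idtoiso(\mapfunc F(p))}}$. Finally, functors preserve $\idtoiso$, so $\id{\idtoiso(\mapfunc F(p))}{F(\idtoiso(p))}=Fg=\inv{f'}\circ f$; substituting yields $f\circ\inv f\circ f'=f'$, as required.

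For the second assertion I would argue purely formally. By \autoref{ct:ffeso}, ``$F$ is an equivalence'' is equivalent to ``$F$ is fully faithful and split essentially surjective'', whereas ``$F$ is a weak equivalence'' is by definition ``$F$ is fully faithful and essentially surjective''. The two notions of surjectivity differ only in that split essential surjectivity asks, for each $b$, for an element of $\sm{a:A} (Fa\cong b)$, while essential surjectivity asks only for an element of its truncation $\bbrck{\sm{a:A} (Fa\cong b)}$. But the first assertion shows that, whenever $F$ is fully faithful (and $A$ is a category), this $\Sigma$-type is already a mere proposition, hence equivalent to its truncation; thus, conditional on full faithfulness, the two surjectivity conditions coincide. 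Combining these gives the desired equivalence of types. That both are mere propositions follows because ``weak equivalence'' is one by the remark preceding the lemma, and ``equivalence'' is one when $A$ is a category by \autoref{ct:adjprop} together with \autoref{ct:isoprop} (or simply since it is equivalent to a mere proposition). The main obstacle is the transport identity in the third paragraph, which requires carefully threading the reindexing of transport through $F$, \autoref{ct:idtoiso-trans}, and the compatibility of $F$ with $\idtoiso$; everything else is formal bookkeeping.
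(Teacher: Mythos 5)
Your proposal is correct and follows essentially the same route as the paper's own proof: the same isomorphism $g:a\cong a'$ with $Fg=\inv{f'}\circ f$ obtained from full faithfulness, the same path $p\defeq\isotoid(g)$ from saturation of $A$, the same transport computation (which the paper leaves implicit but you verify via reindexing, \autoref{ct:idtoiso-trans}, and preservation of $\idtoiso$), and the same deduction of the second claim from \autoref{ct:ffeso} by identifying essential with split essential surjectivity.
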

\begin{proof}
  Suppose given $(a,f)$ and $(a',f')$ in $\sm{a:A} (Fa\cong b)$.
  Then $\inv{f'}\circ f$ is an isomorphism $Fa \cong Fa'$.
  Since $F$ is fully faithful, we have $g:a\cong a'$ with $Fg = \inv{f'}\circ f$.
  And since $A$ is a category, we have $p:a=a'$ with $\idtoiso(p)=g$.
  Now $Fg = \inv{f'}\circ f$ implies $\trans{(\map{(F_0)}{p})}{f} = f'$, hence (by the characterization of equalities in dependent sums) $(a,f)=(a',f')$.

  Thus, for fully faithful functors whose domain is a category, essential surjectivity is equivalent to split essential surjectivity, and so being a weak equivalence is equivalent to being an equivalence.
\end{proof}

This is an important advantage of our category theory over set-based approaches.
As remarked in the introduction, with a purely set-based definition of category, the statement ``every fully faithful and essentially surjective functor is an equivalence of categories'' is equivalent to the axiom of choice (in the appropriate sense of UF).
Here we have it for free, as a category-theoretic version of the function comprehension principle.
We will see in \S\ref{sec:rezk} that this property moreover characterizes categories among precategories.

On the other hand, the following characterization of equivalences of categories is perhaps even more useful.

\begin{defn}\label{ct:isocat}
  A functor $F:A\to B$ is an \textbf{isomorphism of (pre)categories} if $F$ is fully faithful and $F_0:A_0\to B_0$ is an equivalence of types.
\end{defn}

Note that being an isomorphism of precategories is always a mere proposition.
Let $A\cong B$ denote the type of isomorphisms of (pre)categories from $A$ to $B$.

\begin{lem}\label{ct:isoprecat}
  For precategories $A$ and $B$ and $F:A\to B$, the following types are equivalent.
  \begin{enumerate}
  \item $F$ is an isomorphism of precategories.\label{item:ct:ipc1}
  \item There exist $G:B\to A$ and $\eta:1_A = GF$ and $\epsilon:FG=1_B$ such that\label{item:ct:ipc2}
    \begin{equation}
      \map{(F\circ -)}{\eta} = \map{(-\circ F)}{\opp\epsilon}.\label{eq:ct:isoprecattri}
    \end{equation}
  \item There merely exist $G:B\to A$ and $\eta:1_A = GF$ and $\epsilon:FG=1_B$.\label{item:ct:ipc3}
  \end{enumerate}
\end{lem}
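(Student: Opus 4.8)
The plan is to prove the three-way equivalence by establishing the cycle of implications \ref{item:ct:ipc1}$\Rightarrow$\ref{item:ct:ipc2}$\Rightarrow$\ref{item:ct:ipc3}$\Rightarrow$\ref{item:ct:ipc1} and then upgrading it to equivalences of types. Two of the endpoints are already mere propositions: being an isomorphism of precategories is a mere proposition (as noted just before the statement), and \ref{item:ct:ipc3} is a propositional truncation. The role of the coherence condition \eqref{eq:ct:isoprecattri} will be exactly to make \ref{item:ct:ipc2} a mere proposition as well---playing the same part here that the coherence law plays in the definition of an equivalence of types---so that the logical cycle upgrades to an equivalence.

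For \ref{item:ct:ipc2}$\Rightarrow$\ref{item:ct:ipc3} I would simply apply the constructor of the truncation, discarding the coherence datum. For \ref{item:ct:ipc3}$\Rightarrow$\ref{item:ct:ipc1}, since the target is a mere proposition I may eliminate the truncation and work with honest $G$, $\eta:1_A=GF$ and $\epsilon:FG=1_B$. Applying the object-component to the functor equalities $\eta$ and $\epsilon$ yields $\mathsf{id}_{A_0}=G_0\circ F_0$ and $F_0\circ G_0=\mathsf{id}_{B_0}$, exhibiting $G_0$ as a quasi-inverse of $F_0$, so $F_0$ is an equivalence of types. Using the characterization of functor equality---equality of the object-functions together with the transported equality of the hom-functions, the latter a mere proposition because hom-sets are sets---the hom-components of $\eta$ and of $\epsilon$ exhibit each $F_{a,b}$ as having both a retraction and a section (up to the relevant transports), hence as an equivalence; thus $F$ is fully faithful, and \ref{item:ct:ipc1} holds.

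The substantive direction is \ref{item:ct:ipc1}$\Rightarrow$\ref{item:ct:ipc2}. Here I would set $G_0$ to be the inverse of the equivalence $F_0$, and define $G$ on hom-sets by transporting along the witnesses $FG_0 b = b$ and inverting the equivalences $F_{G_0 b,G_0 b'}$ supplied by full faithfulness; functoriality of $G$ is then forced by uniqueness, since $F$ is fully faithful. To produce $\eta:1_A=GF$ and $\epsilon:FG=1_B$ I build the object-function equalities by function extensionality from the homotopies of the equivalence $F_0$, and observe that the accompanying hom-function equalities are automatic, being mere propositions because hom-sets are sets.

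The hard part will be verifying the coherence \eqref{eq:ct:isoprecattri}. Using the characterization of functor equality again, both sides are paths of functors $F=FGF$ whose hom-components live in a mere proposition, so equality of the two paths reduces to an equality of paths of object-functions $F_0=F_0\circ G_0\circ F_0$. This residual, object-level identity is precisely the triangle coherence of a half-adjoint equivalence, which is available because $F_0$ is an equivalence of types (the mere proposition $\mathsf{isequiv}(F_0)$ supplies a coherent inverse). Finally, the same reduction shows that \ref{item:ct:ipc2} is a mere proposition: its object-level content is a half-adjoint-equivalence structure on $F_0$ while its hom-level content is propositional. Since \ref{item:ct:ipc1}, \ref{item:ct:ipc2} and \ref{item:ct:ipc3} are then all mere propositions related by the logical cycle above, they are equivalent as types.
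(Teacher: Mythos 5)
There is a genuine gap, and it sits at the heart of your \ref{item:ct:ipc1}$\Rightarrow$\ref{item:ct:ipc2} step: you claim that after building the object-part equalities $\eta_0$ and $\epsilon_0$ by function extensionality, ``the accompanying hom-function equalities are automatic, being mere propositions because hom-sets are sets.'' This is a non-sequitur: mere propositionhood gives at most one proof, not existence of a proof. An equality $1_A = GF$ of functors consists of the object-part $\eta_0$ \emph{together with} a proof that the hom-functions of $1_A$, transported along $\eta_0$, agree with those of $GF$; by \autoref{ct:idtoiso-trans} this says that $\idtoiso((\eta_0)_{a'}) \circ f \circ \idtoiso(\opp{(\eta_0)}_a) = G_{Fa,Fa'}(Ff)$ for all $f:\hom_A(a,a')$, and with $G$'s hom-action defined through $\epsilon_0$ and $\inv{F}$ (as you, and the paper, define it), verifying this is the technical core of the lemma. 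Applying the injective map $F$ and the fact that $F$ preserves \idtoiso, the obligation reduces to naturality of the family $u_a \defeq \idtoiso\big(\map{(F_0)}{\eta_0}_a \ct (\epsilon_0)_{F_0 a}\big)$, which is exactly what the pointwise triangle identity $\map{(F_0)}{\eta_0}_a = \opp{(\epsilon_0)}_{F_0 a}$ (the content of \eqref{eq:ct:isoprecattri}) supplies by forcing $u_a = 1_{Fa}$. Moreover the obligation is not just unproved but can genuinely fail for incoherent quasi-inverse data: take $A = B$ to be the precategory with $A_0 \defeq \Sn^1$ whose hom-family is defined by descent so that $\hom_A(\mathsf{base},\mathsf{base}) = S_3$ with composition given by multiplication and $\idtoiso(\mathsf{loop})$ equal to a transposition $z$; with $F \defeq 1_A$, $G_0 \defeq \idfunc$, $\epsilon_0 \defeq \mathsf{refl}$, and $\eta_0$ the funext of the tautological loop family, the obligation asserts $z f \inv{z} = f$ for every $f:S_3$, which is false. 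So the coherent (half-adjoint) structure on $F_0$, which you invoke only at the very end to verify \eqref{eq:ct:isoprecattri}, must already be used here, and the transport computation must actually be carried out --- this is where the paper spends most of its proof.

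A second, smaller gap of the same flavor: your argument that \ref{item:ct:ipc2} is a mere proposition asserts that its ``hom-level content is propositional,'' but the hom-action $G_{b,b'}$ of $G$ is \emph{data}, not a proposition. A dependent sum of a prop-valued family over such data is a proposition only if the constraints determine the data uniquely; they do, because the hom-part of $\epsilon$ forces $F(G(g)) = \idtoiso(\opp{(\epsilon_0)}_{b'}) \circ g \circ \idtoiso((\epsilon_0)_b)$ and full faithfulness of $F$ --- which must itself first be extracted from the data, as in your \ref{item:ct:ipc3}$\Rightarrow$\ref{item:ct:ipc1} argument --- then pins $G(g)$ down; but this is an argument to be made, not bookkeeping. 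The paper avoids proving propositionality of \ref{item:ct:ipc2} head-on: it checks explicitly that both composites \ref{item:ct:ipc1}$\to$\ref{item:ct:ipc2}$\to$\ref{item:ct:ipc1} and \ref{item:ct:ipc2}$\to$\ref{item:ct:ipc1}$\to$\ref{item:ct:ipc2} are identities (the latter is exactly where uniqueness of $G$'s hom-action is verified, via the other triangle identity), and it handles \ref{item:ct:ipc3} much as you do, except that full faithfulness is deduced from \autoref{ct:adjointification} and \autoref{ct:ffeso} rather than by your retraction/section-with-transports argument. Your overall architecture --- three mere propositions joined by a cycle of implications --- is sound and arguably tidier, but it becomes a proof only once the transport computation in \ref{item:ct:ipc1}$\Rightarrow$\ref{item:ct:ipc2} and the unique determination of $G$'s hom-action are supplied.
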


In~\eqref{eq:ct:isoprecattri}, $\map{(F\circ -)}{\eta}$ denotes application of the function $(F\circ -)$ (which goes from functors $A\to A$ to functors $A\to B$) to the equality $\eta$, and similarly for $\map{(-\circ F)}{\opp\epsilon}$.
Note that if $B_0$ is not a 1-type, then~\eqref{eq:ct:isoprecattri} may not be a mere proposition.

\begin{proof}
  First note that since hom-sets are sets, equalities between equalities of functors are uniquely determined by their object-parts.
  Thus, by function extensionality,~\eqref{eq:ct:isoprecattri} is equivalent to
  \begin{equation}
    \map{(F_0)}{\eta_0}_a = \opp{(\epsilon_0)}_{F_0 a}.\label{eq:ct:ipctri}
  \end{equation}
  for all $a:A_0$.
  Note that this is precisely the coherence condition for $G_0$, $\eta_0$, and $\epsilon_0$ to be a proof that $F_0$ is an equivalence of types.

  Now suppose~\ref{item:ct:ipc1}.
  Let $G_0:B_0 \to A_0$ be the inverse of $F_0$, with $\eta_0: \idfunc[A_0] = G_0 F_0$ and $\epsilon_0:F_0G_0 = \idfunc[B_0]$ satisfying the triangle identity, which is precisely~\eqref{eq:ct:ipctri}.
  Now define $G_{b,b'}:\hom_B(b,b') \to \hom_A(G_0b,G_0b')$ by
  \[ G_{b,b'}(g) \defeq
  \inv{(F_{G_0b,G_0b'})}\Big(\idtoiso(\opp{(\epsilon_0)}_{b'}) \circ g \circ \idtoiso((\epsilon_0)_b)\Big)
  \]
  (using the assumption that $F$ is fully faithful).
  Since \idtoiso takes opposites to inverses and concatenation to composition, and $F$ is a functor, it follows that $G$ is a functor.

  By definition, we have $(GF)_0 \jdeq G_0 F_0$, which is equal to $\idfunc[A_0]$ by $\eta_0$.
  To obtain $1_A = GF$, we need to show that when transported along $\eta_0$, the identity function of $\hom_A(a,a')$ becomes equal to the composite $G_{Fa,Fa'} \circ F_{a,a'}$.
  In other words, for any $f:\hom_A(a,a')$ we must have
  \begin{multline*}
    \idtoiso((\eta_0)_{a'}) \circ f \circ \idtoiso(\opp{(\eta_0)}_a)\\
    = \inv{(F_{GFa,GFa'})}\Big(\idtoiso(\opp{(\epsilon_0)}_{Fa'})
    \circ F_{a,a'}(f) \circ \idtoiso((\epsilon_0)_{Fa})\Big).
  \end{multline*}
  But this is equivalent to
  \begin{multline*}
    (F_{GFa,GFa'})\Big(\idtoiso((\eta_0)_{a'}) \circ f \circ \idtoiso(\opp{(\eta_0)}_a)\Big)\\
    = \idtoiso(\opp{(\epsilon_0)}_{Fa'})
    \circ F_{a,a'}(f) \circ \idtoiso((\epsilon_0)_{Fa}).
  \end{multline*}
  which follows from functoriality of $F$, the fact that $F$ preserves \idtoiso, and~\eqref{eq:ct:ipctri}.
  Thus we have $\eta:1_A = GF$.

  On the other side, we have $(FG)_0\jdeq F_0 G_0$, which is equal to $\idfunc[B_0]$ by $\epsilon_0$.
  To obtain $FG=1_B$, we need to show that when transported along $\epsilon_0$, the identity function of $\hom_B(b,b')$ becomes equal to the composite $F_{Gb,Gb'} \circ G_{b,b'}$.
  That is, for any $g:\hom_B(b,b')$ we must have
  \begin{multline*}
    F_{Gb,Gb'}\Big(\inv{(F_{Gb,Gb'})}\Big(\idtoiso(\opp{(\epsilon_0)}_{b'}) \circ g \circ \idtoiso((\epsilon_0)_b)\Big)\Big)\\
    = \idtoiso((\opp{\epsilon_0})_{b'}) \circ g \circ \idtoiso((\epsilon_0)_b).
  \end{multline*}
  But this is just the fact that $\inv{(F_{Gb,Gb'})}$ is the inverse of $F_{Gb,Gb'}$.
  And we have remarked that~\eqref{eq:ct:isoprecattri} is equivalent to~\eqref{eq:ct:ipctri}, so~\ref{item:ct:ipc2} holds.

  Conversely, suppose given~\ref{item:ct:ipc2}; then the object-parts of $G$, $\eta$, and $\epsilon$ together with~\eqref{eq:ct:ipctri} show that $F_0$ is an equivalence of types.
  And for $a,a':A_0$, we define $\overline{G}_{a,a'}: \hom_B(Fa,Fa') \to \hom_A(a,a')$ by
  \begin{equation}
    \overline{G}_{a,a'}(g) \defeq \idtoiso(\opp{\eta})_{a'} \circ G(g) \circ \idtoiso(\eta)_a.\label{eq:ct:gbar}
  \end{equation}
  By naturality of $\idtoiso(\eta)$, for any $f:\hom_A(a,a')$ we have
  \begin{align*}
    \overline{G}_{a,a'}(F_{a,a'}(f))
    &= \idtoiso(\opp{\eta})_{a'} \circ G(F(f)) \circ \idtoiso(\eta)_a\\
    &= \idtoiso(\opp{\eta})_{a'} \circ \idtoiso(\eta)_{a'} \circ f \\
    &= f.
  \end{align*}
  On the other hand, for $g:\hom_B(Fa,Fa')$ we have
  \begin{align*}
    F_{a,a'}(\overline{G}_{a,a'}(g))
    &= F(\idtoiso(\opp{\eta})_{a'}) \circ F(G(g)) \circ F(\idtoiso(\eta)_a)\\
    &= \idtoiso(\epsilon)_{Fa'}
    \circ F(G(g))
    \circ \idtoiso(\opp{\epsilon})_{Fa}\\
    &= \idtoiso(\epsilon)_{Fa'}
    \circ \idtoiso(\opp{\epsilon})_{Fa'}
    \circ g\\
    &= g.
  \end{align*}
  (There are lemmas needed here regarding the compatibility between \idtoiso and whiskering, which we leave to the reader to state and prove.)
  Thus, $F_{a,a'}$ is an equivalence, so $F$ is fully faithful; i.e.~\ref{item:ct:ipc1} holds.

  Now the composite~\ref{item:ct:ipc1}$\to$\ref{item:ct:ipc2}$\to$\ref{item:ct:ipc1} is equal to the identity since~\ref{item:ct:ipc1} is a mere proposition.
  On the other side, tracing through the above constructions we see that the composite~\ref{item:ct:ipc2}$\to$\ref{item:ct:ipc1}$\to$\ref{item:ct:ipc2} essentially preserves the object-parts $G_0$, $\eta_0$, $\epsilon_0$, and the object-part of~\eqref{eq:ct:isoprecattri}.
  And in the latter three cases, the object-part is all there is, since hom-sets are sets.

  Thus, it suffices to show that we recover the action of $G$ on hom-sets.
  In other words, we must show that if $g:\hom_B(b,b')$, then
  \[ G_{b,b'}(g) =
  \overline{G}_{G_0b,G_0b'}\Big(\idtoiso(\opp{(\epsilon_0)}_{b'}) \circ g \circ \idtoiso((\epsilon_0)_b)\Big)
  \]
  where $\overline{G}$ is defined by~\eqref{eq:ct:gbar}.
  However, this follows from functoriality of $G$ and the \emph{other} triangle identity, which is equivalent to~\eqref{eq:ct:ipctri}.

  Now since~\ref{item:ct:ipc1} is a mere proposition, so is~\ref{item:ct:ipc2}, so it suffices to show they are co-inhabited with~\ref{item:ct:ipc3}.
  Of course,~\ref{item:ct:ipc2}$\to$\ref{item:ct:ipc3}, so let us assume~\ref{item:ct:ipc3}.
  Since~\ref{item:ct:ipc1} is a mere proposition, we may assume given $G$, $\eta$, and $\epsilon$.
  Then $G_0$ along with $\eta$ and $\epsilon$ imply that $F_0$ is an equivalence.
  Moreover, we also have natural isomorphisms $\idtoiso(\eta):1_A\cong GF$ and $\idtoiso(\epsilon):FG\cong 1_B$, so by \autoref{ct:adjointification}, $F$ is an equivalence of precategories, and in particular fully faithful.
\end{proof}

From \autoref{ct:isoprecat}\ref{item:ct:ipc2} and $\idtoiso$ in functor categories, we conclude immediately that any isomorphism of precategories is an equivalence.
For precategories, the converse can fail.

\begin{eg}\label{ct:chaotic}
  Let $X$ be a type and $x_0:X$ an element, and let $X_{\mathrm{ch}}$ denote the \emph{chaotic} or \emph{indiscrete} precategory on $X$.
  By definition, we have $(X_{\mathrm{ch}})_0\defeq X$, and $\hom_{X_{\mathrm{ch}}}(x,x') = 1$ for all $x,x'$.
  Then the unique functor $X_{\mathrm{ch}}\to 1$ is an equivalence of precategories, but not an isomorphism unless $X$ is contractible.

  This example also shows that a precategory can be equivalent to a category without itself being a category.
  Of course, if a precategory is \emph{isomorphic} to a category, then it must itself be a category.
\end{eg}

However, for categories, the notions of equivalence and isomorphism coincide.

\begin{lem}\label{ct:eqv-levelwise}
  For categories $A$ and $B$, a functor $F:A\to B$ is an equivalence of categories if and only if it is an isomorphism of categories.
\end{lem}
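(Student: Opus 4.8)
The plan is to treat the two implications separately, since one of them is already in hand. The direction ``isomorphism of categories $\Rightarrow$ equivalence of categories'' holds for arbitrary precategories: it was observed immediately after \autoref{ct:isoprecat} (using clause~\ref{item:ct:ipc2} together with $\idtoiso$ in functor categories) that any isomorphism of precategories is an equivalence of precategories. Since being an isomorphism, respectively an equivalence, of categories is the same structure as being an isomorphism, respectively an equivalence, of the underlying precategories, this direction requires no further work when $A$ and $B$ happen to be categories.

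For the converse, suppose $F$ is an equivalence of categories, so that we are given a functor $G\colon B\to A$ together with natural \emph{isomorphisms} $\eta\colon 1_A\cong GF$ and $\epsilon\colon FG\cong 1_B$. The key point is that, because $A$ and $B$ are categories, \autoref{ct:functor-cat} tells us that the functor precategories $A^A$ and $B^B$ are themselves categories. Hence $\idtoiso$ is an equivalence in each of them, and applying its inverse $\isotoid$ to the natural isomorphisms $\eta$ and $\epsilon$ produces honest identities $1_A = GF$ and $FG = 1_B$ (an isomorphism in $A^A$ between $1_A$ and $GF$ is exactly a natural isomorphism, by \autoref{ct:natiso}). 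This is precisely the data demanded by \autoref{ct:isoprecat}\ref{item:ct:ipc3}, so that lemma yields \ref{item:ct:ipc1}: $F$ is an isomorphism of precategories, i.e.\ fully faithful with $F_0$ an equivalence of types. As $A$ and $B$ are categories, this is exactly what it means for $F$ to be an isomorphism of categories.

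I expect the only real subtlety to be the bookkeeping about \emph{which} functor categories are categories: we need $A^A$ and $B^B$, and in each case the codomain ($A$, respectively $B$) is a category, so \autoref{ct:functor-cat} applies directly. If one prefers to avoid functor categories altogether, there is a more elementary route. By \autoref{ct:ffeso} an equivalence of precategories is in particular fully faithful, so it remains only to show that $F_0\colon A_0\to B_0$ is an equivalence of types. For that, since $A$ and $B$ are categories we may apply $\isotoid$ to the component isomorphisms $\eta_a\colon a\cong GFa$ and $\epsilon_b\colon FGb\cong b$, obtaining, for every $a$ and $b$, identities $a = G_0F_0a$ and $F_0G_0b = b$. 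These exhibit $G_0$ as a quasi-inverse to $F_0$, whence $F_0$ is an equivalence of types and $F$ is an isomorphism of categories.
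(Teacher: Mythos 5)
Your proof is correct, but its routing differs from the paper's in both directions, so a comparison is worthwhile. For isomorphism $\Rightarrow$ equivalence, the paper does not invoke the remark following \autoref{ct:isoprecat}; it argues directly: given $F$ fully faithful and $F_0$ an equivalence of types with inverse $G_0$, each identity $FG_0b = b$ yields via \idtoiso an isomorphism $FG_0b\cong b$, so $F$ is split essentially surjective, and \autoref{ct:ffeso} finishes. For equivalence $\Rightarrow$ isomorphism, both you and the paper use \autoref{ct:functor-cat} to turn the natural isomorphisms $\eta$ and $\epsilon$ into identities $1_A = GF$ and $FG = 1_B$; but where you then pass through \autoref{ct:isoprecat}, \ref{item:ct:ipc3}$\Rightarrow$\ref{item:ct:ipc1} (whose proof rests on \autoref{ct:adjointification}), the paper simply takes the object parts of these identities to exhibit $G_0$ as a quasi-inverse of $F_0$, full faithfulness having already been established in the proof of \autoref{ct:ffeso}. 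The trade-off: your main route reuses the heavy machinery of \autoref{ct:isoprecat} and is therefore shorter on the page, while the paper's is more elementary and stays self-contained modulo \autoref{ct:ffeso} and \autoref{ct:functor-cat}; it also sidesteps any worry about the coherence condition~\eqref{eq:ct:isoprecattri}, which your route avoids only because \ref{item:ct:ipc3} of \autoref{ct:isoprecat} demands no coherence. Your alternative ``elementary route'' is in fact closer to the paper's own argument than your main one, and even improves on it slightly: by applying \isotoid of $A$ and of $B$ pointwise to the components $\eta_a$ and $\epsilon_b$, you obtain the quasi-inverse data for $F_0$ without ever forming a functor category, whereas the paper still needs \autoref{ct:functor-cat} in that direction.
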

\begin{proof}
  Since both are mere properties, it suffices to show they are co-inhabited.
  So first suppose $F$ is an equivalence of categories, with $(G,\eta,\epsilon)$ given.
  We have already seen that $F$ is fully faithful.
  By \autoref{ct:functor-cat}, the natural isomorphisms $\eta$ and $\epsilon$ yield identities $\id{1_A}{GF}$ and $\id{FG}{1_B}$, hence in particular identities $\id{\idfunc[A]}{G_0\circ F_0}$ and $\id{F_0\circ G_0}{\idfunc[B]}$.
Thus, $F_0$ is an equivalence of types.

  Conversely, suppose $F$ is fully faithful and $F_0$ is an equivalence of types, with inverse $G_0$, say.
  Then for each $b:B$ we have $G_0 b:A$ and an identity $\id{FGb}{b}$, hence an isomorphism $FGb\cong b$.
  Thus, by \autoref{ct:ffeso}, $F$ is an equivalence of categories.
\end{proof}

Of course, there is yet a third notion of sameness for (pre)categories: equality.
However, the univalence axiom implies that it coincides with isomorphism.

\begin{lem}\label{ct:cat-eq-iso}
  If $A$ and $B$ are precategories, then the function
  \[(\id A B) \to (A\cong B)\]
  (defined by induction from the identity functor) is an equivalence of types.
\end{lem}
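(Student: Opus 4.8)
The plan is to compute the identity type $\id A B$ explicitly by the structure identity principle — that is, by repeatedly applying the characterization of paths in $\Sigma$-types, together with univalence and function extensionality — and then to recognize the result as the type $A \cong B$ of isomorphisms of precategories from \autoref{ct:isocat}. Since the map in question is a fiberwise transformation out of the based path space $B \mapsto (A = B)$, and $\sum_B (A = B)$ is contractible, it will be a fiberwise equivalence as soon as $\sum_B (A \cong B)$ is contractible; constructing \emph{any} family of equivalences $\id A B \simeq (A\cong B)$ establishes exactly that contractibility. So it suffices to build such a family, and I need not separately verify that it agrees with the inductively defined map.

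First I would unfold a precategory as an iterated $\Sigma$-type with components: the object type $A_0$, the hom-family $\hom_A : A_0 \to A_0 \to \set$, the identities and composition, and the unit and associativity axioms. By the standard description of equality in $\Sigma$-types, an element of $\id A B$ is a tuple consisting of an equality of object types, an equality of hom-families lying over it, equalities of the identity and composition data, and equalities of the axiom-proofs. The axioms are mere propositions — precisely because each $\hom$ is a set, so that identity types of morphisms are mere propositions — and hence the corresponding equality-components are contractible and may be discarded.

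Next I would apply univalence to the object part: an equality $A_0 = B_0$ is the same as an equivalence $e : A_0 \simeq B_0$, which is exactly the datum that the object-map of a functor be an equivalence of types. Transporting the hom-family along $e$ and using function extensionality, the equality of hom-families becomes a family of equalities $\hom_A(a,a') = \hom_B(ea,ea')$; applying univalence once more — now to \emph{sets}, where an equivalence is a bijection — turns these into a family of equivalences, which is the fully faithful datum. Under these identifications the leftover equalities of the identity and composition data become exactly the functoriality conditions (preservation of $1$ and of $\circ$); computing the relevant transport of the composition operation along $e$ and the hom-equivalences is the step that uses transport calculations in the spirit of \autoref{ct:idtoiso-trans}. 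Assembling everything, $\id A B$ is equivalent to the type of functors that are fully faithful and whose object-map is an equivalence of types, which is $A \cong B$ by \autoref{ct:isocat}.

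The main obstacle is the bookkeeping in this middle step: one must check that transporting the composition operation along the object-equivalence together with the hom-equivalences reproduces precisely the composition-preservation axiom of a functor, and not some twisted variant, while keeping careful track of the direction of each transport and the placement of inverses. It is worth emphasizing that univalence is genuinely essential here and is invoked twice — once for the arbitrary object types and once for the hom-sets — and that function extensionality is needed to reduce equalities of the hom-family and of the operations to pointwise data.
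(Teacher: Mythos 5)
Your proof is correct, and its core computation is exactly the paper's: both unfold $\id A B$ via the characterization of paths in $\Sigma$-types, discard the axiom components because identity types of hom-sets are mere propositions, apply univalence twice (once to the object types, once to the hom-sets, with function extensionality mediating), and recognize the result as the type of fully faithful functors whose object-map is an equivalence of types, i.e.\ $A\cong B$ per \autoref{ct:isocat}. Where you genuinely diverge is the closing step. The paper finishes by arguing that the equivalence so constructed \emph{is equal to} the canonical map defined by induction from the identity functor---``by induction on identity'', i.e.\ by checking agreement at $\refl{A}$---which requires tracing what the assembled equivalence does to reflexivity. You instead invoke the fundamental theorem of identity types: since your family of equivalences is uniform in $B$, it exhibits $\sum_{B}(A\cong B)$ as equivalent to the based path space $\sum_{B}(\id A B)$, hence contractible, and then \emph{any} fiberwise map out of the based path space into that family---in particular the canonical one---is automatically a fiberwise equivalence. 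This buys you exactly what you claim: no need to compute the constructed equivalence at $\refl{A}$, which is the fiddliest part of the paper's argument to make precise (and the part most sensitive to exactly how the structure-identity equivalence was assembled). The cost is negligible, since the contractibility criterion is standard; your variant is, if anything, the more robust and formalization-friendly way to conclude, while the paper's is shorter on the page because it asserts the $\refl{}$-agreement in a single sentence.
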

\begin{proof}
  As usual for dependent sum types, to give an element of $\id A B$ is equivalent to giving
  \begin{itemize}
  \item an identity $P_0:\id{A_0}{B_0}$,
  \item for each $a,b:A_0$, an identity
    \[P_{a,b}:\id{\hom_A(a,b)}{\hom_B(\trans {P_0} a,\trans {P_0} b)},\]
  \item identities $\id{\trans {(P_{a,a})} {1_a}}{1_{\trans {P_0} a}}$ and $\id{\trans {(P_{a,c})} {gf}}{\trans {(P_{b,c})} g \circ \trans {(P_{a,b})} f}$.
  \end{itemize}
  (Again, we use the fact that the identity types of hom-sets are mere propositions.)
  However, by univalence, this is equivalent to giving
  \begin{itemize}
  \item an equivalence of types $F_0:\eqv{A_0}{B_0}$,
  \item for each $a,b:A_0$, an equivalence of types
    \[F_{a,b}:\eqv{\hom_A(a,b)}{\hom_B(F_0 (a),F_0 (b))},\]
  \item and identities $\id{F_{a,a}(1_a)}{1_{F_0 (a)}}$ and $\id{F_{a,c}(gf)}{F_{b,c} (g)\circ F_{a,b} (f)}$.
  \end{itemize}
  But this consists exactly of a functor $F:A\to B$ that is an isomorphism of categories.
  And by induction on identity, this equivalence $\eqv{(\id A B)}{(A\cong B)}$ is equal to the function obtained by induction.
\end{proof}

Thus, for categories, equality also coincides with equivalence.
We can interpret this as follows: define a ``pre-2-category'' to have a type of objects equipped with hom-precategories, composition functors, and so on.
Then categories, functors, and natural transformations form a pre-2-category whose hom-precategories are categories (this is \autoref{ct:functor-cat}), and \autoref{ct:cat-eq-iso} is a categorified version of the saturation property.
It is consistent to use the word \emph{2-category} for a pre-2-category satisfying both of these conditions.

The following corollary was conjectured by Hofmann and Streicher\cite{hs:gpd-typethy}.

\begin{thm}\label{ct:cat-2cat}
  If $A$ and $B$ are categories, then the function
  \[(\id A B) \to (A\simeq B)\]
  (defined by induction from the identity functor) is an equivalence of types.
\end{thm}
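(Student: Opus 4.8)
The plan is to obtain this as a corollary of \autoref{ct:cat-eq-iso} and \autoref{ct:eqv-levelwise}. By \autoref{ct:cat-eq-iso}, the induction-defined map $(\id A B)\to(A\cong B)$ is already an equivalence of types, since $A$ and $B$ are in particular precategories. It therefore suffices to show that the forgetful map $j:(A\cong B)\to(A\simeq B)$---which sends an isomorphism of categories to its underlying equivalence, and exists because every isomorphism of precategories is an equivalence (as noted after \autoref{ct:isoprecat})---is itself an equivalence of types, and that $j$ precomposed with the map from \autoref{ct:cat-eq-iso} recovers the map in the statement.

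First I would argue that $j$ is an equivalence. Both $A\cong B$ and $A\simeq B$ are sum types over functors $F:A\to B$, with fibers ``$F$ is an isomorphism of categories'' and ``$F$ is an equivalence of categories'' respectively, and $j$ is fiberwise over $F$. Each fiber is a mere proposition: the former always (as noted after \autoref{ct:isocat}), and the latter because $A$ is a category (by \autoref{ct:adjprop} and \autoref{ct:isoprop}). By \autoref{ct:eqv-levelwise} these two mere propositions are logically equivalent for each $F$, hence equivalent. Since a fiberwise equivalence induces an equivalence on total spaces, $j$ is an equivalence.

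The remaining point, which is the only thing requiring care, is that the composite $(\id A B)\to(A\cong B)\xrightarrow{j}(A\simeq B)$ coincides with the map defined by induction in the statement. I would check this by path induction, so that it suffices to compare the two maps at $\refl{A}$. The induction-defined map sends $\refl{A}$ to the identity functor $\idfunc[A]$ equipped with its canonical structure of an equivalence; the composite sends $\refl{A}$ first to $\idfunc[A]$ (as an isomorphism of categories) and then, under $j$, to $\idfunc[A]$ with its underlying equivalence structure. Since the type of equivalence data on a fixed functor is a mere proposition, these two elements of $A\simeq B$ are equal as soon as their functor components agree, which they do, both being $\idfunc[A]$. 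Hence the composite equals the map in the statement, and being a composite of equivalences, that map is an equivalence.
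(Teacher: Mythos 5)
Your proposal is correct and takes essentially the same route as the paper, whose entire proof is the one-line citation of \autoref{ct:cat-eq-iso} and \autoref{ct:eqv-levelwise}. You have simply made explicit the details the paper leaves implicit: that \autoref{ct:eqv-levelwise} gives a fiberwise equivalence of mere propositions over the type of functors (using \autoref{ct:adjprop} and \autoref{ct:isoprop} for propositionality of the equivalence side), hence an equivalence of total types, and the routine path-induction check that the composite agrees with the induction-defined map.
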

\begin{proof}
  By \autoref{ct:cat-eq-iso} and \autoref{ct:eqv-levelwise}.
\end{proof}

As a consequence, the type of categories is a 2-type.
For since $A\simeq B$ is a subtype of the type of functors from $A$ to $B$, which are the objects of a category, it is a 1-type; hence the identity types $\id A B$ are also 1-types.

\section{The Yoneda lemma}
\label{sec:yoneda}

In this section we fix a particular universe \type, and write \set for the type of sets in that universe and \uset for the category whose objects are sets in that universe and whose morphisms are functions between them.
Of course, \set and \uset do not themselves lie in the universe \type, but rather in some higher universe.

Define a precategory to be \emph{locally small} if its hom-sets lie in our fixed universe \type.
We now show that every locally small precategory has a \uset-valued hom-functor.
First we need to define opposites and products of (pre)categories.

\begin{defn}
  For a precategory $A$, its \textbf{opposite} $A\op$ is a precategory with the same type of objects, with $\hom_{A\op}(a,b) \defeq \hom_A(b,a)$, and with identities and composition inherited from $A$.
\end{defn}

\begin{defn}
  For precategories $A$ and $B$, their \textbf{product} $A\times B$ is a precategory with $(A\times B)_0 \defeq A_0 \times B_0$ and
  \[\hom_{A\times B}((a,b),(a',b')) \defeq \hom_A(a,a') \times \hom_B(b,b').\]
  Identities are defined by $1_{(a,b)}\defeq (1_a,1_b)$ and composition by $(g,g')(f,f') \defeq ((gf),(g'f'))$.
\end{defn}

\begin{lem}\label{ct:functorexpadj}
  For precategories $A,B,C$, the following types are equivalent.
  \begin{enumerate}
  \item Functors $A\times B\to C$.
  \item Functors $A\to C^B$.
  \end{enumerate}
\end{lem}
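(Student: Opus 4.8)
The plan is to exhibit the equivalence by constructing explicit maps in both directions and showing they are mutually inverse, relying throughout on the fact (noted after \autoref{ct:nattrans}) that a functor is determined by its actions on objects and on hom-sets, and a natural transformation by its components, with the remaining axioms being mere propositions because hom-sets are sets.

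First I would define the currying map sending a functor $F : A \times B \to C$ to a functor $\widehat F : A \to C^B$. On an object $a : A$, the functor $\widehat F(a) : B \to C$ acts by $b \mapsto F(a,b)$ on objects and by $g \mapsto F(1_a, g)$ on morphisms; functoriality of $\widehat F(a)$ follows from that of $F$ together with $(1_a, g') \circ (1_a, g) = (1_a, g' \circ g)$. On a morphism $f : \hom_A(a,a')$ I would take $\widehat F(f) : \widehat F(a) \to \widehat F(a')$ to have component $F(f, 1_b)$ at each $b$; its naturality square is an instance of functoriality of $F$, since both composites equal $F(f,g)$. That $\widehat F$ itself preserves identities and composition again reduces to functoriality of $F$.

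Conversely I would define the uncurrying map sending $G : A \to C^B$ to $\check G : A \times B \to C$, acting by $(a,b) \mapsto G(a)(b)$ on objects and by $(f,g) \mapsto G(f)_{b'} \circ G(a)(g)$ on a morphism $(f,g) : \hom_{A\times B}((a,b),(a',b'))$. Preservation of identities is immediate from $G(1_a) = 1$, while preservation of composition uses functoriality of $G$ and of each $G(a)$ together with the naturality of the transformations $G(f)$; this is where \autoref{ct:interchange} (equivalently, the naturality squares) does the real work, and I expect this verification to be the most laborious step.

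Finally I would check the two round-trips. For $F \mapsto \widehat F \mapsto \check{\widehat F}$, the composite agrees with $F$ on objects and sends $(f,g)$ to $F(f,1_{b'}) \circ F(1_a,g) = F(f,g)$ by functoriality, so it equals $F$ because the functor axioms are mere propositions. For $G \mapsto \check G \mapsto \widehat{\check G}$, one computes that $\widehat{\check G}(a)$ agrees with $G(a)$ on objects and morphisms, and that the component of $\widehat{\check G}(f)$ at $b$ is $\check G(f,1_b) = G(f)_b$; hence $\widehat{\check G} = G$ since identity of functors and of natural transformations is detected on components. The only genuine obstacle is the bookkeeping in the uncurrying functoriality; everything else is forced once we recall that hom-sets are sets.
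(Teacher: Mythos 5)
Your proof is correct and takes essentially the same approach as the paper: the same currying map $\widehat F(a) = F(a,-)$, $\widehat F(f)_b = F(f,1_b)$, the same uncurrying map $\check G(f,g) = G(f)_{b'}\circ G(a)(g)$, and the observation that the two are mutually inverse because functor axioms and natural transformations are determined componentwise. The paper simply states the functoriality checks and the round-trip verifications as ``easy'' or ``clear,'' whereas you spell them out; your details are accurate.
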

\begin{proof}
  Given $F:A\times B\to C$, for any $a:A$ we obviously have a functor $F_a : B\to C$.
  This gives a function $A_0 \to (C^B)_0$.
  Next, for any $f:\hom_A(a,a')$, we have for any $b:B$ the morphism $F_{(a,b),(a',b)}(f,1_b):F_a(b) \to F_{a'}(b)$.
  These are the components of a natural transformation $F_a \to F_{a'}$.
  Functoriality in $a$ is easy to check, so we have a functor $\widehat{F}:A\to C^B$.

  Conversely, suppose given $G:A\to C^B$.
  Then for any $a:A$ and $b:B$ we have the object $G(a)(b):C$, giving a function $A_0 \times B_0 \to C_0$.
  And for $f:\hom_A(a,a')$ and $g:\hom_B(b,b')$, we have the morphism
  \begin{equation*}
     G(a')_{b,b'}(g)\circ G_{a,a'}(f)_b = G_{a,a'}(f)_{b'} \circ  G(a)_{b,b'}(g)
  \end{equation*}
  in $\hom_C(G(a)(b), G(a')(b'))$.
  Functoriality is again easy to check, so we have a functor $\check{F}:A\times B \to C$.

  Finally, it is also clear that these operations are inverses.
\end{proof}

Now for any locally small precategory $A$, we have a hom-functor
\[\hom_A : A\op \times A \to \uset.\]
It takes a pair $(a,b): (A\op)_0 \times A_0 \jdeq A_0 \times A_0$ to the set $\hom_A(a,b)$.
For a morphism $(f,f') : \hom_{A\op\times A}((a,b),(a',b'))$, by definition we have $f:\hom_A(a',a)$ and $f':\hom_A(b,b')$, so we can define
\begin{align*}
  (\hom_A)_{(a,b),(a',b')}(f,f')
  &\defeq (g \mapsto (f'gf))\\
  &: \hom_A(a,b) \to \hom_A(a',b').
\end{align*}
Functoriality is easy to check.

By \autoref{ct:functorexpadj}, therefore, we have an induced functor $\y:A\to \uset^{A\op}$, which we call the \textbf{Yoneda embedding}.
As usual, of course, $\uset^{A\op}$ may not be locally small unless $A$ is small (i.e.\ unless $A_0$ lies in our fixed universe \type).

\begin{thm}[The Yoneda lemma]\label{ct:yoneda}
  For any locally small precategory $A$, any $a:A$, and any functor $F:\uset^{A\op}$, we have an isomorphism
  \begin{equation}\label{eq:yoneda}
    \hom_{\uset^{A\op}}(\y a, F) \cong Fa.
  \end{equation}
  Moreover, this is natural in both $a$ and $F$.
\end{thm}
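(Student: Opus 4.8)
The plan is to construct the isomorphism \eqref{eq:yoneda} explicitly in both directions and verify that the two constructions are mutually inverse. The forward map $\Phi : \hom_{\uset^{A\op}}(\y a, F) \to Fa$ is the expected evaluation: given a natural transformation $\gamma : \y a \to F$, I would apply its component at $a$, namely $\gamma_a : \hom_A(a,a) \to Fa$, to the identity morphism $1_a$, setting $\Phi(\gamma) \defeq \gamma_a(1_a)$. The backward map $\Psi : Fa \to \hom_{\uset^{A\op}}(\y a, F)$ is the one forced by naturality: given $x : Fa$, I would define a natural transformation whose component at $b:A$ is the function $\hom_A(b,a) \to Fb$ sending $f \mapsto F(f)(x)$, where I use that $F : \uset^{A\op}$ means $F(f) : Fa \to Fb$ for $f : \hom_{A\op}(a,b) \jdeq \hom_A(b,a)$. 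I would first check that this candidate $\Psi(x)$ is genuinely natural, which reduces to the functoriality equation $F(f \circ g) = F(g) \circ F(f)$ in $\uset^{A\op}$, i.e.\ contravariant functoriality of $F$.

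Next I would verify the two round-trip identities. For $\Phi \circ \Psi = \idfunc$, starting from $x:Fa$, I compute $\Phi(\Psi(x)) = \Psi(x)_a(1_a) = F(1_a)(x) = 1_{Fa}(x) = x$, using that $F$ preserves identities. For $\Psi \circ \Phi = \idfunc$, starting from $\gamma : \y a \to F$, I must show $\Psi(\gamma_a(1_a)) = \gamma$; since natural transformations into a \uset-valued functor are equal as soon as their components agree (the target hom-sets are sets, so by function extensionality it suffices to check values), I reduce to showing $F(f)(\gamma_a(1_a)) = \gamma_b(f)$ for each $b:A$ and $f : \hom_A(b,a)$. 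This is exactly the naturality square for $\gamma$ applied to the morphism $f$, since the action of $\y a$ on $f$ sends $1_a \mapsto 1_a \circ f \circ 1_a = f$. Because both composites are identities and the type ``$f$ is an isomorphism'' is a mere proposition by \autoref{ct:isoprop}, $\Phi$ is therefore an isomorphism in \uset, establishing \eqref{eq:yoneda}.

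For the naturality claim, I would phrase it precisely: \eqref{eq:yoneda} is the component at $(a,F)$ of a natural isomorphism between two functors $A \times \uset^{A\op} \to \uset$, where the left side is $(a,F) \mapsto \hom_{\uset^{A\op}}(\y a, F)$ and the right side is the evaluation functor $(a,F) \mapsto Fa$. Both require the hom-functor machinery and \autoref{ct:functorexpadj} to be assembled into honest functors of the pair $(a,F)$; I would then check naturality separately in each variable, since a transformation natural in each argument of a product is natural in the pair. Naturality in $F$ asks that for $\alpha : F \to G$ the evident square commutes, which unwinds to $\alpha_a(\gamma_a(1_a)) = (\alpha \circ \gamma)_a(1_a)$ and holds by definition of composition in $\uset^{A\op}$. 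Naturality in $a$ asks compatibility with the action of $\y$ on a morphism $h : \hom_A(a,a')$, and reduces after unfolding to another application of the naturality of the given transformations together with functoriality.

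The main obstacle I expect is bookkeeping rather than conceptual difficulty: keeping the variance straight between $A$ and $A\op$ throughout, and being careful that ``natural in both $a$ and $F$'' is given an honest type-theoretic meaning as naturality of a transformation between genuine functors out of the product precategory $A \times \uset^{A\op}$. Verifying that the evaluation assignment and the hom assignment are functorial in the pair $(a,F)$—and that the explicit $\Phi$ assembles into a natural transformation—is where the real work lies, whereas the pointwise inverse computations are short once naturality of $\gamma$ and contravariant functoriality of $F$ are in hand.
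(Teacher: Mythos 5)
Your proposal is correct and follows essentially the same route as the paper's proof: the same evaluation-at-$1_a$ map, the same inverse $x \mapsto (f \mapsto F(f)(x))$, and the same two round-trip computations using functoriality of $F$ and naturality of $\gamma$ at $f$ via $\y a(f)(1_a) = f$. Your extra care in spelling out what ``natural in both $a$ and $F$'' means (as a transformation between functors out of $A \times \uset^{A\op}$) fills in precisely the part the paper leaves to the reader, and is a welcome addition rather than a deviation.
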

\begin{proof}
  Given a natural transformation $\alpha:\y a \to F$, we can consider the component $\alpha_a : \y a(a) \to F a$.
  Since $\y a(a)\jdeq \hom_A(a,a)$, we have $1_a : \y a(a)$, so that $\alpha_a(1_a) : F a$.
  This gives a function $(\alpha \mapsto \alpha_a(1_a))$ from left to right in~\eqref{eq:yoneda}.

  In the other direction, given $x:F a$, we define $\alpha:\y a \to F$ by
  \[\alpha_{a'}(f) \defeq F_{a',a}(f)(x). \]
  Naturality is easy to check, so this gives a function from right to left in~\eqref{eq:yoneda}.

  To show that these are inverses, first suppose given $x:F a$.
  Then with $\alpha$ defined as above, we have $\alpha_a(1_a) = F_{a,a}(1_a)(x) = 1_{F a}(x) = x$.
  On the other hand, if we suppose given $\alpha:\y a \to F$ and define $x$ as above, then for any $f:\hom_A(a',a)$ we have
  \begin{align*}
    \alpha_{a'}(f)
    &= \alpha_{a'} (\y a_{a',a}(f))\\
    &= (\alpha_{a'}\circ \y a_{a',a}(f))(1_a)\\
    &= (F_{a',a}(f)\circ \alpha_a)(1_a)\\
    &= F_{a',a}(f)(\alpha_a(1_a))\\
    &= F_{a',a}(f)(x).
  \end{align*}
  Thus, both composites are equal to identities.
  We leave the proof of naturality to the reader.
\end{proof}

\begin{cor}\label{ct:yoneda-embedding}
  The Yoneda embedding $\y :A\to \uset^{A\op}$ is fully faithful.
\end{cor}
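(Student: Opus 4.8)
The plan is to read off full faithfulness directly from the Yoneda lemma (\autoref{ct:yoneda}) by specializing $F$ to a representable functor. Full faithfulness of $\y$ means that for all $a,b:A$ the function $\y_{a,b}:\hom_A(a,b)\to\hom_{\uset^{A\op}}(\y a,\y b)$ is an equivalence of types. Applying \autoref{ct:yoneda} with $F\defeq\y b$ produces an isomorphism $\hom_{\uset^{A\op}}(\y a,\y b)\cong(\y b)(a)$, and since $(\y b)(a)\jdeq\hom_A(a,b)$ this is already an isomorphism in \uset between exactly the two sets at issue. An isomorphism in \uset between sets is a function with a two-sided inverse, hence an equivalence of the underlying types. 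So the only real content is to verify that the bijection supplied by \autoref{ct:yoneda} is the map $\y_{a,b}$ itself (or its inverse), and not some unrelated identification.

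First I would unwind the two functors involved. By construction $\y$ arises from the hom-functor $\hom_A:A\op\times A\to\uset$ via the exponential adjunction of \autoref{ct:functorexpadj}; thus $\y a$ is the representable presheaf $\hom_A(-,a)$, with $(\y a)(a')\jdeq\hom_A(a',a)$. Tracing the construction in the proof of \autoref{ct:functorexpadj}, the natural transformation $\y_{a,b}(h):\y a\to\y b$ associated to $h:\hom_A(a,b)$ has component at $a'$ the postcomposition map $f\mapsto h\circ f$, sending $\hom_A(a',a)$ to $\hom_A(a',b)$. Here I must be careful to track the variance coming from $A\op$ and the side on which $h$ is composed.

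Next I would compute the right-to-left map of \autoref{ct:yoneda} at $F=\y b$. That map sends $x:(\y b)(a)$ to the natural transformation $\alpha$ with $\alpha_{a'}(f)\defeq(\y b)_{a',a}(f)(x)$; since $\y b=\hom_A(-,b)$ acts on morphisms by precomposition, $(\y b)_{a',a}(f)(x)=x\circ f$. Substituting $x\defeq h:\hom_A(a,b)$ gives $\alpha_{a'}(f)=h\circ f$, which is precisely the component computed above for $\y_{a,b}(h)$. Hence the right-to-left Yoneda map coincides with $\y_{a,b}$, and \autoref{ct:yoneda} exhibits it as one half of an isomorphism of sets; being such, it is an equivalence of types, which is exactly what is required.

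The one place where care is needed---and the only real obstacle---is the bookkeeping of the previous two paragraphs: matching the component formula for $\y_{a,b}$ coming out of the exponential adjunction with the component formula for the Yoneda map, while correctly keeping track of the opposite category and of pre- versus post-composition. Once the two component formulas are seen to agree, the conclusion is immediate.
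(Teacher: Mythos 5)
Your proposal is correct and takes essentially the same approach as the paper: specialize \autoref{ct:yoneda} to $F\defeq \y b$ and identify the resulting bijection $\hom_{\uset^{A\op}}(\y a,\y b)\cong \hom_A(a,b)$ with the action $\y_{a,b}$ of $\y$ on hom-sets. The paper compresses that last identification into ``it is easy to check''; you carry it out explicitly, and your bookkeeping (post-composition with $h$ for the component of $\y_{a,b}(h)$, pre-composition for the action of $\y b$ on morphisms of $A\op$) is exactly right.
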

\begin{proof}
  By \autoref{ct:yoneda}, we have
  \[ \hom_{\uset^{A\op}}(\y a, \y b) \cong \y b(a) \jdeq \hom_A(a,b). \]
  It is easy to check that this isomorphism is in fact the action of \y on hom-sets.
\end{proof}

\begin{cor}\label{ct:yoneda-mono}
  If $A$ is a category, then $\y_0 : A_0 \to (\uset^{A\op})_0$ is a monomorphism.
  In particular, if $\y a = \y b$, then $a=b$.
\end{cor}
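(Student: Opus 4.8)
The plan is to realize the action of $\y_0$ on paths as an equivalence by comparing it, through $\idtoiso$, with the action of $\y$ on isomorphisms. By the definition of monomorphism, it suffices to fix $a,b:A$ and show that the function
$\y_0 : (\id a b) \to (\id{\y a}{\y b})$
is an equivalence of types.

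First I would assemble the square
\[ \xymatrix{
(\id a b) \ar[r]^-{\y_0} \ar[d]_{\idtoiso} & (\id{\y a}{\y b}) \ar[d]^{\idtoiso} \\
(a\cong b) \ar[r]_-{\y} & (\y a\cong \y b) } \]
whose vertical maps are the canonical $\idtoiso$ maps for $A$ and for $\uset^{A\op}$, and whose bottom edge is the action of $\y$ on isomorphisms (the restriction of $\y_{a,b}$ to the sub-types of isomorphisms). This square commutes because every functor preserves $\idtoiso$, as noted just after \autoref{ct:functor}.

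Next I would verify that three of the four edges are equivalences. The left edge is an equivalence precisely because $A$ is a category (\autoref{ct:category}). The right edge is an equivalence because $\uset^{A\op}$ is again a category: indeed $\uset$ is a category by \autoref{ct:eg:set}, so $\uset^{A\op}$ is a category by \autoref{ct:functor-cat} (local smallness is irrelevant to this application). The bottom edge is an equivalence because $\y$ is fully faithful (\autoref{ct:yoneda-embedding}): the hom-equivalence $\y_{a,b}$ carries isomorphisms to isomorphisms and, being itself an equivalence, reflects them, so it restricts to an equivalence on the sub-types cut out by the mere proposition ``is an isomorphism'' (\autoref{ct:isoprop}).

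Finally, since the square commutes and its two vertical edges and bottom edge are equivalences, the top edge $\y_0$ is a composite of equivalences, hence itself an equivalence by the 2-out-of-3 property; this is exactly the assertion that $\y_0$ is a monomorphism. The ``in particular'' clause is then immediate: from $\id{\y a}{\y b}$ we obtain $\id a b$ by applying the inverse of this equivalence. I expect the only point needing a moment's care to be the claim about the bottom edge---that restricting the fully faithful hom-equivalence to isomorphisms yields an equivalence---but this is routine, since ``is an isomorphism'' is a mere proposition that is both preserved and reflected by a fully faithful functor.
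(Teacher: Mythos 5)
Your proof is correct and is essentially the paper's own argument: the paper likewise observes that $\y$ induces an equivalence on isomorphism types (by \autoref{ct:yoneda-embedding}) and then transfers this to identity types via $\idtoiso$, using that both $A$ and $\uset^{A\op}$ are categories and that functors preserve $\idtoiso$. You have merely spelled out the commuting square and the 2-out-of-3 step that the paper leaves implicit.
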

\begin{proof}
  By \autoref{ct:yoneda-embedding}, \y induces an isomorphism on sets of isomorphisms.
  But as $A$ and $\uset^{A\op}$ are categories and \y is a functor, this is equivalently an isomorphism on identity types, which is the definition of being mono.
\end{proof}

\begin{defn}\label{ct:representable}
  A functor $F:\uset^{A\op}$ is said to be \textbf{representable} if there exists $a:A$ and an isomorphism $\y a \cong F$.
\end{defn}

\begin{thm}\label{ct:representable-prop}
  If $A$ is a category, then the type ``$F$ is representable'' is a mere proposition.
\end{thm}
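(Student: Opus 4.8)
The plan is to recognize that, by \autoref{ct:representable}, the type ``$F$ is representable'' is definitionally the dependent sum
\[
\sm{a:A} (\y a \cong F),
\]
and that this is precisely the shape of type handled by \autoref{ct:catweq}. Indeed, \autoref{ct:catweq} asserts that whenever $G\colon A\to B$ is fully faithful and $A$ is a category, the type $\sm{a:A}(Ga\cong b)$ is a mere proposition for every $b:B$. So my first step is simply to identify the ambient data: take $B\defeq\uset^{A\op}$, take the functor to be the Yoneda embedding $\y\colon A\to\uset^{A\op}$, and take $b\defeq F$.

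Next I would discharge the two hypotheses of \autoref{ct:catweq}. That $A$ is a category is exactly the assumption of the present statement. That $\y$ is fully faithful is the content of \autoref{ct:yoneda-embedding}. With both hypotheses in hand, \autoref{ct:catweq} applies verbatim and yields that $\sm{a:A}(\y a\cong F)$ is a mere proposition, which is what we wanted to prove. Note that we do \emph{not} need $\uset^{A\op}$ itself to be a category here, since \autoref{ct:catweq} only constrains the domain $A$; this is fortunate, as $\uset^{A\op}$ need not be locally small.

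There is essentially no obstacle: the entire argument is the observation that representability of $F$ is the fiber of the Yoneda embedding over $F$, together with two already-established facts about that embedding. If anything requires a word of care, it is only the bookkeeping that the ``there exists'' in \autoref{ct:representable} is the genuine $\Sigma$-type (not its propositional truncation), so that the conclusion of \autoref{ct:catweq} is literally a statement about the type ``$F$ is representable'' rather than about some truncation of it.
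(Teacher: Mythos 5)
Your proof is correct, but it takes a genuinely different route from the paper's. The paper derives this theorem from \autoref{ct:yoneda-mono}: since $A$ and $\uset^{A\op}$ are categories and $\y$ is fully faithful, $\y_0$ is a monomorphism on objects, and the representability type --- which, isomorphism coinciding with identity in the category $\uset^{A\op}$, is equivalent to the honest fiber $\sm{a:A}(\y a = F)$ of $\y_0$ over $F$ --- is therefore a mere proposition, since fibers of monomorphisms are. You instead apply \autoref{ct:catweq} directly to the Yoneda embedding (fully faithful by \autoref{ct:yoneda-embedding}) with $b\defeq F$, which handles the iso-fiber $\sm{a:A}(\y a \cong F)$ without ever converting isomorphisms in the functor category into identities. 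Both arguments rest on the same two pillars (full faithfulness of $\y$ and saturation of $A$), but yours genuinely does not need $\uset^{A\op}$ to be a category, whereas the paper's identification of representability with a fiber of $\y_0$ does; conversely, the paper's route passes through \autoref{ct:yoneda-mono}, a statement of independent interest. One small correction to a side remark: your stated reason for being glad that saturation of $\uset^{A\op}$ is not needed --- that it ``need not be locally small'' --- is beside the point, since local smallness is orthogonal to being a category, and $\uset^{A\op}$ \emph{is} a category by \autoref{ct:functor-cat} and univalence for \uset regardless of the size of $A$; the real gain is only economy of hypotheses.
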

\begin{proof}
  By definition ``$F$ is representable'' is just the fiber of $\y_0$ over $F$.
  Since $\y_0$ is mono by \autoref{ct:yoneda-mono}, this fiber is a mere proposition.
\end{proof}

In particular, in a category, any two representations of the same functor are equal.
We could use this to give a different proof of \autoref{ct:adjprop} by characterizing adjunctions in terms of representability.

\section{The Rezk completion}
\label{sec:rezk}

In this section we will give a universal way to replace a precategory by a category.
It relies on the fact that ``categories see weak equivalences as equivalences''.

To prove this latter fact, we begin with a couple of lemmas which are completely standard category theory, phrased carefully so as to make sure we are using the eliminator for the propositional truncation correctly.
One would have to be similarly careful in classical category theory if one wanted to avoid the axiom of choice: any time we want to define a function, we need to characterize its values uniquely somehow.

\begin{lem}\label{lem:precomp-faithful}
  If $A,B,C$ are precategories and $H:A\to B$ is an essentially surjective functor, then $(-\circ H):C^B \to C^A$ is faithful.
\end{lem}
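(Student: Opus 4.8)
The plan is to unfold what faithfulness of $(-\circ H)$ means and then shrink the goal to a mere proposition so that essential surjectivity of $H$ can legitimately be used. Fix functors $F,G:B\to C$, i.e.\ objects of $C^B$; I must show that the whiskering map $\gamma\mapsto(\gamma H)$ from $\hom_{C^B}(F,G)$ to $\hom_{C^A}(FH,GH)$ is injective. So suppose $\gamma,\delta:F\to G$ are natural transformations with $\id{(\gamma H)}{(\delta H)}$, which by \autoref{def:whisker} means exactly $\id{\gamma_{Ha}}{\delta_{Ha}}$ for all $a:A$; the goal is $\id{\gamma}{\delta}$. Since identity of natural transformations is determined componentwise, by function extensionality (as noted after \autoref{ct:nattrans}), it suffices to prove $\id{\gamma_b}{\delta_b}$ for every $b:B$.

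The key observation is that this goal $\id{\gamma_b}{\delta_b}$ is a \emph{mere proposition}, because $\hom_C(Fb,Gb)$ is a set and hence its identity types are mere propositions. Therefore I may apply the eliminator for the propositional truncation to the hypothesis that $H$ is essentially surjective, and so assume given an \emph{actual} object $a:A$ together with an isomorphism $f:Ha\cong b$ in $B$. (This is precisely the careful use of the truncation eliminator flagged in the preamble to the lemma: I cannot extract such $a,f$ to build \emph{data}, but I am free to use them while proving the mere proposition $\id{\gamma_b}{\delta_b}$.)

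From such an $f$ the conclusion follows by naturality. Since $F$ and $G$ preserve isomorphisms, $Ff:FHa\to Fb$ is an isomorphism, and naturality of $\gamma$ at $f:\hom_B(Ha,b)$ reads $\id{Gf\circ\gamma_{Ha}}{\gamma_b\circ Ff}$, which I can solve for $\gamma_b$ by composing with $\inv{(Ff)}$; the same computation applies to $\delta$, giving
\[ \id{\gamma_b}{Gf\circ\gamma_{Ha}\circ\inv{(Ff)}} \qquad\text{and}\qquad \id{\delta_b}{Gf\circ\delta_{Ha}\circ\inv{(Ff)}}. \]
Since $\id{\gamma_{Ha}}{\delta_{Ha}}$ by hypothesis, the two right-hand sides are equal, whence $\id{\gamma_b}{\delta_b}$, as required. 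The only genuinely delicate point is the second paragraph: one must confirm that the target $\id{\gamma_b}{\delta_b}$ is a mere proposition before eliminating the truncation; the rest is routine manipulation of the naturality squares.
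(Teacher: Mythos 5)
Your proof is correct and follows essentially the same route as the paper's: reduce to the componentwise goal $\id{\gamma_b}{\delta_b}$, observe it is a mere proposition since hom-sets are sets, eliminate the propositional truncation in the essential surjectivity hypothesis to obtain $a:A$ and $f:Ha\cong b$, and conclude by naturality (the paper writes the isomorphism inverse as $F(\inv{f})$ rather than $\inv{(Ff)}$, an immaterial difference). Your explicit flagging of where the truncation eliminator is legitimately applied matches the paper's stated intent exactly.
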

\begin{proof}
  Let $F,G:B\to C$, and $\gamma,\delta:F\to G$ be such that $\gamma H = \delta H$; we must show $\gamma=\delta$.
  Thus let $b:B$; we want to show $\gamma_b=\delta_b$.
  This is a mere proposition, so since $H$ is essentially surjective, we may assume given an $a:A$ and an isomorphism $f:Ha\cong b$.
  But now we have
  \[ \gamma_b = G(f) \circ \gamma_{Ha} \circ F(\inv{f})
  = G(f) \circ \delta_{Ha} \circ F(\inv{f})
  = \delta_b.\qedhere
  \]
\end{proof}

\begin{lem}\label{lem:precomp-full}
  If $A,B,C$ are precategories and $H:A\to B$ is essentially surjective and full, then $(-\circ H):C^B \to C^A$ is fully faithful.
\end{lem}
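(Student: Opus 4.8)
The plan is to show that $(-\circ H):C^B \to C^A$ is full, since faithfulness is already established in \autoref{lem:precomp-faithful} under the weaker hypothesis of essential surjectivity alone. So the task reduces to showing: given $F,G:B\to C$ and a natural transformation $\theta:FH\to GH$, there merely exists (in fact, uniquely exists, which is how we will build it) a natural transformation $\gamma:F\to G$ with $\gamma H = \theta$.

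Let me think about the construction. We want to define $\gamma_b:Fb\to Gb$ for each $b:B$. Given $b$, since $H$ is essentially surjective, we *merely* have some $a:A$ and isomorphism $f:Ha\cong b$. Using $\theta_a:FHa\to GHa$ together with $Ff$ and $\inv{Gf}$, the natural candidate is $\gamma_b \defeq G(f)\circ \theta_a \circ F(\inv f)$. The obstacle is that essential surjectivity only gives us $(a,f)$ *merely*, so to define $\gamma_b$ as an actual element of the set $\hom_C(Fb,Gb)$ we must check that this formula is independent of the choice of $(a,f)$ — i.e. the value $G(f)\circ\theta_a\circ F(\inv f)$ is the same for any two choices. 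This is where fullness of $H$ enters (essential surjectivity alone was enough for faithfulness, but here we genuinely need more).

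So the main steps, in order, are as follows. First, I would verify well-definedness: given two witnesses $(a,f)$ and $(a',f')$ with $f:Ha\cong b$ and $f':Ha'\cong b$, I must show $G(f)\circ\theta_a\circ F(\inv f) = G(f')\circ\theta_{a'}\circ F(\inv{f'})$. The isomorphism $\inv{f'}\circ f:Ha\cong Ha'$ need not come from a morphism $a\to a'$ on the nose, but because $H$ is full, there merely exists $k:\hom_A(a,a')$ with $Hk = \inv{f'}\circ f$; now naturality of $\theta$ at $k$ gives $\theta_{a'}\circ FHk = GHk\circ\theta_a$, and substituting $Hk=\inv{f'}\circ f$ and rearranging yields exactly the required equality. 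Since the goal is an equality in a hom-set, hence a mere proposition, I may discharge the mere existence of $k$. This lets me define $\gamma_b$ by the eliminator for propositional truncation applied to essential surjectivity, landing in the set $\hom_C(Fb,Gb)$. Second, I would check naturality of $\gamma$, again a mere proposition so I may choose witnesses freely, reducing it to naturality of $\theta$. Third, I would check $\gamma H = \theta$: for $a:A$, choosing the canonical witness $(a,1_{Ha})$ for $b\defeq Ha$ collapses the formula to $\theta_a$ (modulo $G(1)=1$, $F(1)=1$). Finally, uniqueness of $\gamma$ — needed to conclude that $(-\circ H)$ is genuinely fully faithful and not merely full — follows from \autoref{lem:precomp-faithful}, since any two lifts of $\theta$ agree after precomposition with $H$ and $H$ is (still) essentially surjective.

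The step I expect to be the crux is the well-definedness argument, because it is the only place the full strength of the hypothesis is used and the only place requiring real care with the propositional truncation. Concretely, the delicate point is that essential surjectivity supplies $(a,f)$ only inside a truncation, so the formula for $\gamma_b$ cannot be written down until one has shown it factors through the truncation; fullness is precisely what makes the two-choice comparison go through, by promoting the abstract isomorphism $\inv{f'}\circ f$ between objects in the image of $H$ to (the image under $H$ of) an honest morphism of $A$ over which naturality of $\theta$ can be invoked. Everything after that — naturality, the computation $\gamma H=\theta$, and uniqueness — is routine and mirrors the classical proof.
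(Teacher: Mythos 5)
Your construction has the same mathematical content as the paper's proof --- fullness of $H$ promoting $\inv{f'}\circ f$ to (the image of) a morphism of $A$, naturality of $\theta$, the computation $\gamma H=\theta$, and uniqueness via \autoref{lem:precomp-faithful} all appear there in the same roles --- but there is a genuine gap at exactly the step you identified as the crux. Having shown that the formula $G(f)\circ\theta_a\circ F(\inv f)$ is independent of the witness $(a,f)$, you claim this ``lets me define $\gamma_b$ by the eliminator for propositional truncation applied to essential surjectivity, landing in the set $\hom_C(Fb,Gb)$.'' That is not a valid application of the eliminator: the recursion principle for $\brck{X}$ only produces maps into \emph{mere propositions}, and $\hom_C(Fb,Gb)$ is a set, not a mere proposition. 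Weak constancy (well-definedness) of a map into a set does not, by the eliminator alone, yield a map out of the truncation; that implication is itself a nontrivial theorem (it does hold for sets, by a Kraus--Escard\'o--Coquand--Altenkirch-style argument), which your proof neither proves nor cites. This is precisely the subtlety the paper warns about when it says these lemmas are ``phrased carefully so as to make sure we are using the eliminator for the propositional truncation correctly.''

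The paper's proof is structured to avoid this issue: instead of defining a map on witnesses and then trying to factor it through the truncation, it forms, for each $b:B$, the type
\[
  \sm{g:\hom_C(Fb,Gb)} \prd{a:A}{f:Ha\cong b} (\theta_a = \inv{Gf}\circ g\circ Ff)
\]
of candidate values \emph{equipped with} their characterizing property, and proves this type is \emph{contractible}. Contractibility is a mere proposition, so essential surjectivity may be untruncated to exhibit an inhabitant (your candidate $g\defeq Gh\circ\theta_{a_0}\circ\inv{Fh}$, checked against all other witnesses by your well-definedness computation), and the proof that any two inhabitants are equal is again your well-definedness argument. The component $\gamma_b$ is then extracted as the unique inhabitant --- no elimination from a truncation into a set ever occurs. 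Your argument is repairable either by restructuring it this way or by first proving the factorization lemma for weakly constant maps into sets (whose proof is essentially the same contractibility device); as written, however, the key step does not go through.
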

\begin{proof}
  It remains to show fullness.
  Thus, let $F,G:B\to C$ and $\gamma:FH \to GH$.
  We claim that for any $b:B$, the type
  \begin{equation}\label{eq:fullprop}
    \sm{g:\hom_C(Fb,Gb)} \prd{a:A}{f:Ha\cong b} (\gamma_a =  \inv{Gf}\circ g\circ Ff)
  \end{equation}
  is contractible.
  Since contractibility is a mere property, and $H$ is essentially surjective, we may assume given $a_0:A$ and $h:Ha_0\cong b$.

  Now take $g\defeq Gh \circ \gamma_{a_0} \circ \inv{Fh}$.
  Then given any other $a:A$ and $f:Ha\cong b$, we must show $\gamma_a =  \inv{Gf}\circ g\circ Ff$.
  Since $H$ is full, there merely exists a morphism $k:\hom_A(a,a_0)$ such that $Hk = \inv{h}\circ f$.
  And since our goal is a mere proposition, we may assume given some such $k$.
  Then we have
  \begin{align*}
    \gamma_a &= \inv{GHk}\circ \gamma_{a_0} \circ FHk\\
    &= \inv{Gf} \circ Gh \circ \gamma_{a_0} \circ \inv{Fh} \circ Ff\\
    &= \inv{Gf}\circ g\circ Ff.
  \end{align*}
  Thus,~\eqref{eq:fullprop} is inhabited.
  It remains to show it is a mere proposition.
  Let $g,g':\hom_C(Fb, Gb)$ be such that for all $a:A$ and $f:Ha\cong b$, we have both $(\gamma_a =  \inv{Gf}\circ g\circ Ff)$ and $(\gamma_a =  \inv{Gf}\circ g'\circ Ff)$.
  The dependent product types are mere propositions, so all we have to prove is $g=g'$.
  But this is a mere proposition and $H$ is essentially surjective, so we may assume $a_0:A$ and $h:Ha_0\cong b$, in which case we have
  \[ g = Gh \circ \gamma_{a_0} \circ \inv{Fh} = g'.\]

  This proves that~\eqref{eq:fullprop} is contractible for all $b:B$.
  Now we define $\delta:F\to G$ by taking $\delta_b$ to be the unique $g$ in~\eqref{eq:fullprop} for that $b$.
  To see that this is natural, suppose given $f:\hom_B(b,b')$; we must show $Gf \circ \delta_b = \delta_{b'}\circ Ff$.
  As before, we may assume $a:A$ and $h:Ha\cong b$, and likewise $a':A$ and $h':Ha'\cong b'$.
  Since $H$ is full as well as essentially surjective, we may also assume $k:\hom_A(a,a')$ with $Hk = \inv{h'}\circ f\circ h$.

  Since $\gamma$ is natural, $GHk\circ \gamma_a = \gamma_{a'} \circ FHk$.
  Using the definition of $\delta$, we have
  \begin{align*}
    Gf \circ \delta_b
    &= Gf \circ Gh \circ \gamma_a \circ \inv{Fh}\\
    &= Gh' \circ GHk\circ \gamma_a \circ \inv{Fh}\\
    &= Gh' \circ \gamma_{a'} \circ FHk \circ \inv{Fh}\\
    &= Gh' \circ \gamma_{a'} \circ \inv{Fh'} \circ Ff\\
    &= \delta_{b'} \circ Ff.
  \end{align*}
  Thus, $\delta$ is natural.
  Finally, for any $a:A$, applying the definition of $\delta_{Ha}$ to $a$ and $1_a$, we obtain $\gamma_a = \delta_{Ha}$.
  Hence, $\delta \circ H = \gamma$.
\end{proof}

The proof of the theorem itself follows almost exactly the same lines, with the saturation of $C$ inserted in one crucial step, which we have bolded below for emphasis.
This is the point at which we are trying to define a function into \emph{objects} without using choice, and so we must be careful about what it means for an object to be ``uniquely specified''.
In classical category theory, all one can say is that this object is specified up to unique isomorphism, but in set-theoretic foundations this is not a sufficient amount of uniqueness to give us a function without invoking AC.
In Univalent Foundations, however, if $C$ is a category, then isomorphism is equality, and we have the appropriate sort of uniqueness (namely, living in a contractible space).

\begin{thm}\label{ct:cat-weq-eq}
  If $A,B$ are precategories, $C$ is a category, and $H:A\to B$ is a weak equivalence, then $(-\circ H):C^B \to C^A$ is an isomorphism.
\end{thm}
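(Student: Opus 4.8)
The plan is to verify the two conditions in \autoref{ct:isocat}: that $(-\circ H)$ is fully faithful, and that its action on objects is an equivalence of types. The first is immediate. A weak equivalence is by definition full, faithful, and essentially surjective, so in particular $H$ is essentially surjective and full, and \autoref{lem:precomp-full} already tells us that $(-\circ H):C^B\to C^A$ is fully faithful. All the real work lies in showing that the object map $(-\circ H)_0$, which sends $F:B\to C$ to $F\circ H:A\to C$, is an equivalence of types. This is the only place where the hypothesis that $C$ is a \emph{category} (and not merely a precategory) will be used.

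For this I would construct an explicit quasi-inverse, extending each $G:A\to C$ to a functor $F:B\to C$ along $H$, imitating the proof of \autoref{lem:precomp-full} ``one dimension up''. The decisive move is the definition of $F$ on objects. For $b:B$ I consider a type analogous to~\eqref{eq:fullprop}: the type of objects $c:C$ equipped with a coherent family of isomorphisms $Ga\cong c$ indexed by all $a:A$ and $f:Ha\cong b$, coherent in the sense that whenever $f:Ha\cong b$ and $f':Ha'\cong b$ induce, via full faithfulness of $H$, a morphism $k:a\cong a'$ with $Hk=\inv{f'}\circ f$, the evident triangle commutes. I claim this type is contractible, and I set $F(b)$ to be its center of contraction. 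It is inhabited because $H$ is essentially surjective: there merely exists $(a_0,h)$ with $h:Ha_0\cong b$, and one takes $c\defeq Ga_0$, with the isomorphism $Ga\cong c$ for a given $(a,f)$ obtained by applying $G$ to the $k:a\cong a_0$ with $Hk=\inv{h}\circ f$. The point at which saturation enters — the step bolded in the theorem's preamble — is the proof that this type is a \emph{mere proposition}: essential surjectivity alone pins $c$ down only up to unique isomorphism, but because $C$ is a category, $c\cong c'$ coincides with $c=c'$, so the specifying data live in a genuinely contractible space and $F(b)$ is a well-defined object.

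Once $F$ is defined on objects, I would define it on morphisms exactly as $G$ was extended in the proof of \autoref{ct:isoprecat}, using full faithfulness of $H$ to send a morphism of $B$ to a uniquely determined morphism of $C$ conjugated by the chosen isomorphisms; functoriality then follows from functoriality of $G$ together with the coherence built into the definition of $F$ on objects. By construction the data specifying $F(Ha)$ includes a canonical isomorphism $F(Ha)\cong Ga$, natural in $a$, so $F\circ H\cong G$; and since $C^A$ is a category by \autoref{ct:functor-cat}, this natural isomorphism upgrades to an identity $F\circ H=G$. Checking that $G\mapsto F$ is inverse to $(-\circ H)_0$ on both sides is then one further application of the same contractibility.

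I expect the main obstacle to be essentially bookkeeping: stating the coherence condition in the definition of $F(b)$ so that the type really is contractible and so that the induced action on morphisms is manifestly functorial, all while tracking the propositional truncations carefully (we must never ``choose'' an $a$ or a $k$ except when the goal is a mere proposition, exactly as flagged before \autoref{lem:precomp-faithful}). A cleaner, if less self-contained, alternative would be to prove only that $(-\circ H)$ is split essentially surjective; since it is already fully faithful and both $C^A$ and $C^B$ are categories, \autoref{ct:ffeso} then makes it an equivalence of precategories, and \autoref{ct:eqv-levelwise} promotes this to an isomorphism of categories, which is exactly the desired conclusion.
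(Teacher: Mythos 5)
Your proposal is correct and follows essentially the same route as the paper: the heart of both arguments is the construction, for each $b:B$, of a contractible type of objects $c:C$ equipped with a coherent family of isomorphisms indexed by the $(a,h:Ha\cong b)$, with the saturation of $C$ used exactly where you place it (to make this type a mere proposition rather than just inhabited), and with pointwise contractibility of these types yielding the extension functor without any appeal to choice. The only difference is in the top-level bookkeeping: rather than directly verifying \autoref{ct:isocat} by exhibiting a quasi-inverse on objects (which obliges you to check both inverse composites), the paper reduces via \autoref{ct:eqv-levelwise} and \autoref{ct:catweq} to showing that $(-\circ H)$, already known to be fully faithful, is merely essentially surjective---which is in substance your closing ``cleaner alternative,'' with mere essential surjectivity plus \autoref{ct:catweq} in place of split essential surjectivity plus \autoref{ct:ffeso}.
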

\begin{proof}
  By \autoref{ct:functor-cat}, $C^B$ and $C^A$ are categories.
  Thus, by \autoref{ct:eqv-levelwise} it will suffice to show that $(-\circ H)$ is an equivalence.
  But since we know from the preceeding two lemmas that it is fully faithful, by \autoref{ct:catweq} it will suffice to show that it is essentially surjective.
  Thus, suppose $F:A\to C$; we want there to merely exist a $G:B\to C$ such that $GH\cong F$.

  For each $b:B$, let $X_b$ be the type whose elements consist of:
  \begin{enumerate}
  \item An element $c:C$; and
  \item For each $a:A$ and $h:Ha\cong b$, an isomorphism $k_{a,h}:Fa\cong c$; such that\label{item:eqvprop2}
  \item For each $(a,h)$ and $(a',h')$ as in~\ref{item:eqvprop2} and each $f:\hom_A(a,a')$ such that $h'\circ Hf = h$, we have $k_{a',h'}\circ Ff = k_{a,h}$.\label{item:eqvprop3}
  \end{enumerate}
  We claim that for any $b:B$, the type $X_b$ is contractible.
  As this is a mere proposition and $H$ is essentially surjective, we may assume given $a_0:A$ and $h_0:Ha_0 \cong b$.
  Let $c^0\defeq Fa_0$.
  Next, given $a:A$ and $h:Ha\cong b$, since $H$ is fully faithful there is a unique isomorphism $g_{a,h}:a\to a_0$ with $Hg_{a,h} = \inv{h_0}\circ h$; define $k^0_{a,h} \defeq Fg_{a,h}$.
  Finally, if $h'\circ Hf = h$, then $\inv{h_0}\circ h'\circ Hf = \inv{h_0}\circ h$, hence $g_{a',h'} \circ f = g_{a,h}$ and thus $k^0_{a',h'}\circ Ff = k^0_{a,h}$.
  Therefore, $X_b$ is inhabited.

  Now suppose given another $(c^1,k^1): X_b$.
  Then $k^1_{a_0,h_0}:c^0 \jdeq Fa_0 \cong c^1$.
  \textbf{Since $C$ is a category, we have $p:c^0=c^1$ with $\idtoiso(p) = k^1_{a_0,h_0}$.}
  And for any $a:A$ and $h:Ha\cong b$, by~\ref{item:eqvprop3} for $(c^1,k^1)$ with $f\defeq g_{a,h}$, we have
  \[k^1_{a,h} = k^1_{a_0,h_0} \circ k^0_{a,h} = \trans{p}{k^0_{a,h}}\]
  This gives the requisite data for an equality $(c^0,k^0)=(c^1,k^1)$, completing the proof that $X_b$ is contractible.

  Now since $X_b$ is contractible for each $b$, the type $\prd{b:B} X_b$ is also contractible.
  In particular, it is inhabited, so we have a function assigning to each $b:B$ a $c$ and a $k$.
  Define $G_0(b)$ to be this $c$; this gives a function $G_0 :B_0 \to C_0$.

  Next we need to define the action of $G$ on morphisms.
  For each $b,b':B$ and $f:\hom_B(b,b')$, let $Y_f$ be the type whose elements consist of:
  \begin{enumerate}[resume]
  \item A morphism $g:\hom_C(Gb,Gb')$, such that
  \item For each $a:A$ and $h:Ha\cong b$, and each $a':A$ and $h':Ha'\cong b'$, and any $\ell:\hom_A(a,a')$, we have\label{item:eqvprop5}
    \[ (h' \circ H\ell = f \circ h)
    \to
    (k_{a',h'} \circ F\ell = g\circ k_{a,h}). \]
  \end{enumerate}
  We claim that for any $b,b'$ and $f$, the type $Y_f$ is contractible.
  As this is a mere proposition, we may assume given $a_0:A$ and $h_0:Ha_0\cong b$, and each $a'_0:A$ and $h'_0:Ha'_0\cong b'$.
  Then since $H$ is fully faithful, there is a unique $\ell_0:\hom_A(a_0,a_0')$ such that $h'_0 \circ H\ell_0 = f \circ h_0$.
  Define $g_0 \defeq k_{a_0',h_0'} \circ F \ell_0 \circ \inv{(k_{a_0,h_0})}$.

  Now for any $a,h,a',h'$, and $\ell$ such that $(h' \circ H\ell = f \circ h)$, we have $\inv{h}\circ h_0:Ha_0\cong Ha$, hence there is a unique $m:a_0\cong a$ with $Hm = \inv{h}\circ h_0$ and hence $h\circ Hm = h_0$.
  Similarly, we have a unique $m':a_0'\cong a'$ with $h'\circ Hm' = h_0'$.
  Now by~\ref{item:eqvprop3}, we have $k_{a,h}\circ Fm = k_{a_0,h_0}$ and $k_{a',h'}\circ Fm' = k_{a_0',h_0'}$.
  We also have
  \begin{align*}
    Hm' \circ H\ell_0
    &= \inv{(h')} \circ h_0' \circ H\ell_0\\
    &= \inv{(h')} \circ f \circ h_0\\
    &= \inv{(h')} \circ f \circ h \circ \inv{h} \circ h_0\\
    &= H\ell \circ Hm
  \end{align*}
  and hence $m'\circ \ell_0 = \ell\circ m$ since $H$ is fully faithful.
  Finally, we can compute
  \begin{align*}
    g_0 \circ k_{a,h}
    &= k_{a_0',h_0'} \circ F \ell_0 \circ \inv{(k_{a_0,h_0})} \circ k_{a,h}\\
    &= k_{a_0',h_0'} \circ F \ell_0 \circ \inv{Fm}\\
    &= k_{a_0',h_0'} \circ \inv{(Fm')} \circ F\ell\\
    &= k_{a',h'}\circ F\ell.
  \end{align*}
  This completes the proof that $Y_f$ is inhabited.
  To show it is contractible, since hom-sets are sets, it thankfully suffices to take another $g_1:\hom_C(Gb,Gb')$ satisfying~\ref{item:eqvprop5} and show $g_0=g_1$.
  However, we still have our specified $a_0,h_0,a_0',h_0',\ell_0$ around, and~\ref{item:eqvprop5} implies both $g_0$ and $g_1$ must be equal to $k_{a_0',h_0'} \circ F \ell_0 \circ \inv{(k_{a_0,h_0})}$.

  This completes the proof that $Y_f$ is contractible for each $b,b':B$ and $f:\hom_B(b,b')$.
  Therefore, there is a function assigning to each such $f$ its unique inhabitant; denote this function $G_{b,b'}:\hom_B(b,b') \to \hom_C(Gb,Gb')$.
  The proof that $G$ is a functor is straightforward.

  Finally, for any $a_0:A$, defining $c\defeq Fa_0$ and $k_{a,h}\defeq F g$, where $g:\hom_A(a,a_0)$ is the unique isomorphism with $Hg = h$, gives an element of $X_{Ha_0}$.
  Thus, it is equal to the specified one; hence $GHa=Fa$.
  Similarly, for $f:\hom_A(a_0,a_0')$ we can define an element of $Y_{Hf}$ by transporting along these equalities, which must therefore be equal to the specified one.
  Hence, we have $GH=F$, and thus $GH\cong F$ as desired.
\end{proof}

Therefore, if a precategory $A$ admits a weak equivalence functor $A\to \widehat{A}$ where $\widehat{A}$ is a category, then that is its ``reflection'' into categories: any functor from $A$ into a category will factor essentially uniquely through $\widehat{A}$.
We now construct such a weak equivalence.

\begin{thm}\label{thm:rezk-completion}
  For any precategory $A$, there is a category $\widehat A$ and a weak equivalence $A\to\widehat{A}$.
\end{thm}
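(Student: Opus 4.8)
The plan is to realize $\widehat A$ as the \emph{essential image} of the Yoneda embedding $\y : A \to \uset^{A\op}$. First I would observe that the target is already a category: by \autoref{ct:eg:set} the precategory $\uset$ is a category, and then \autoref{ct:functor-cat} shows that $\uset^{A\op}$ is a category for any precategory $A$. Moreover, by \autoref{ct:yoneda-embedding} the functor $\y$ is fully faithful. The only thing preventing $\y$ itself from being a weak equivalence is that it need not be essentially surjective: its image consists only of the representable functors.

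To fix this, I would define $\widehat A$ to be the full subcategory of $\uset^{A\op}$ spanned by the \emph{merely representable} functors, i.e.\ I take
\[ \widehat A_0 \defeq \sm{F:\uset^{A\op}} \bbrck{\sm{a:A} (\y a \cong F)}, \]
with hom-sets inherited from $\uset^{A\op}$. Since $A$ is only a precategory, I must use the propositional truncation here: the untruncated predicate ``$F$ is representable'' need not be a mere proposition (contrast \autoref{ct:representable-prop}, which requires $A$ to be a category). Because each $\y a$ is representable, hence merely representable, the Yoneda embedding corestricts to a functor $\y' : A \to \widehat A$.

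The crux of the argument, and the step I expect to require the most care, is to show that $\widehat A$ is again a category. More generally I would prove the reusable fact that, for any category $C$ and any family of mere propositions $P : C_0 \to \prop$, the full subcategory on $\setof{x:C_0 | P(x)}$ is a category. The point is that an isomorphism $(x,p)\cong(y,q)$ in the subcategory is exactly an isomorphism $x\cong y$ in $C$, while an identity $(x,p)=(y,q)$ in the subtype is, because $P$ is a mere proposition, equivalent to an identity $x=y$ in $C$. Tracing the definitions shows that the map $\idtoiso$ for the subcategory factors as $\idtoiso$ for $C$ composed with these two equivalences; since the latter is an equivalence by the hypothesis that $C$ is a category (\autoref{ct:category}), so is the former. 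Applying this with $C \jdeq \uset^{A\op}$ yields that $\widehat A$ is a category.

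It then remains to check that $\y' : A \to \widehat A$ is a weak equivalence. It is fully faithful because $\y$ is, and passing to a full subcategory does not alter hom-sets. It is essentially surjective essentially by construction: an object of $\widehat A$ is a functor $F$ together with a witness that there merely exists $a:A$ with $\y a \cong F$, and such an isomorphism is the same datum as an isomorphism $\y' a \cong F$ in $\widehat A$. Hence $\y'$ is fully faithful and essentially surjective, completing the construction.
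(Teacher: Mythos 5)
Your proposal is correct and follows essentially the same route as the paper: both take $\widehat A$ to be the full subcategory of $\uset^{A\op}$ on the merely representable functors, invoke univalence (via \autoref{ct:eg:set} and \autoref{ct:functor-cat}) to see that $\uset^{A\op}$ is a category, and conclude via \autoref{ct:yoneda-embedding} that the corestricted Yoneda embedding is a weak equivalence. The only difference is expository: you spell out the lemma that a full subcategory of a category on a mere-proposition predicate is again a category, a step the paper asserts without detailed proof (it is formalized in \texttt{sub\textunderscore precategories.v}).
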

\begin{proof}
  The hom-sets of $A$ must lie in some universe \type, so that $A$ is locally small with respect to that universe.
  Write \uset for the category of sets in \type, and let $\widehat{A}_0 \defeq \setof{ F:\uset^{A\op} | \bbrck{\sm{a:A} (\y a \cong F)}}$, with hom-sets inherited from $\uset^{A\op}$.
  In other words, $\widehat{A}$ is the full subcategory of $\uset^{A\op}$ determined by the functors that are \emph{merely representable}.
  Then the inclusion $\widehat{A} \to \uset^{A\op}$ is fully faithful and a monomorphism on objects.
  Since $\uset^{A\op}$ is a category (by \autoref{ct:functor-cat}, since \uset is a category by univalence), $\widehat A$ is also a category.

  Let $A\to\widehat A$ be the Yoneda embedding.
  This is fully faithful by \autoref{ct:yoneda-embedding}, and essentially surjective by definition of $\widehat{A}_0$.
  Thus it is a weak equivalence.
\end{proof}

\begin{rmk}
  Note, however, that even if $A$ itself is a ``small category'' with respect to some universe \type (that is, both $A_0$ and all its hom-sets lie in \type), then $\widehat A$ as we have constructed it will lie in the next higher universe.
  One could imagine a ``resizing axiom'' that could deal with this.
  It is also possible to give a direct construction of $\widehat A$ using higher inductive types~\cite{ls:hits}, which leaves its universe level unchanged; see~\cite[Chapter 9]{HoTTbook}.
\end{rmk}

We call the construction $A\mapsto \widehat A$ the \textbf{Rezk completion}, although as mentioned in the introduction, there is also an argument for calling it the \textbf{stack completion}.

We have seen that most precategories arising in practice are categories, since they are constructed from \uset, which is a category by the univalence axiom.
However, there are a few cases in which the Rezk completion is necessary to obtain a category.

\begin{eg}
  Recall from \autoref{ct:fundgpd} that for any type $X$ there is a pregroupoid with $X$ as its type of objects and $\hom(x,y) \defeq \pizero{x=y}$.
  Its Rezk completion is the \emph{fundamental groupoid} of $X$.
  Under the equivalence between groupoids and 1-types, we can identify this groupoid with the 1-truncation $\trunc1X$.
\end{eg}

\begin{eg}\label{ct:hocat}
  Recall from \autoref{ct:hoprecat} that there is a precategory whose type of objects is \type and with $\hom(X,Y) \defeq \pizero{X\to Y}$.
  Its Rezk completion may be called the \emph{homotopy category of types}.
  Its type of objects can be identified with the 1-truncation of the universe, $\trunc1\type$.
\end{eg}

Finally, the Rezk completion allows us to show that the notion of ``category'' is determined by the notion of ``weak equivalence of precategories''.
Thus, insofar as the latter is inevitable, so is the former.

\begin{thm}
  A precategory $C$ is a category if and only if for every weak equivalence of precategories $H:A\to B$, the induced functor $(-\circ H):C^B \to C^A$ is an isomorphism of precategories.
\end{thm}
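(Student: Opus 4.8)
The plan is to handle the two implications separately; the forward one is immediate, and all the work lies in the converse. If $C$ is a category and $H\colon A\to B$ is any weak equivalence, then $(-\circ H)\colon C^B\to C^A$ is an isomorphism of precategories by \autoref{ct:cat-weq-eq} applied verbatim, so there is nothing further to prove in that direction.

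For the converse, assume the stated property. First I would invoke \autoref{thm:rezk-completion} to obtain the Yoneda embedding $\y\colon C\to\widehat C$, a weak equivalence into a \emph{category} $\widehat C$. The whole strategy is to upgrade this $\y$ to an \emph{isomorphism} of precategories: once we know $\y$ is an isomorphism, $C$ is isomorphic to the category $\widehat C$ and hence is itself a category, by the closing remark of \autoref{ct:chaotic} (equivalently, by transporting the mere property ``is a category'' along the identity $C=\widehat C$ furnished by \autoref{ct:cat-eq-iso}). It is essential to reach an isomorphism and not merely an equivalence: as \autoref{ct:chaotic} shows, a precategory can be equivalent to a category without being one.

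To build the isomorphism, I would apply the hypothesis to the weak equivalence $\y$ (with $A\defeq C$ and $B\defeq\widehat C$), which makes $(-\circ\y)\colon C^{\widehat C}\to C^C$ an isomorphism of precategories. Its action on objects is then an equivalence of types, so the fibre over $1_C$ is contractible, producing a functor $G\colon\widehat C\to C$ together with an honest equality $G\circ\y=1_C$ rather than a mere isomorphism. Next, since $\widehat C$ is a category, \autoref{ct:cat-weq-eq} applies on the other side and tells us that $(-\circ\y)\colon(\widehat C)^{\widehat C}\to(\widehat C)^C$ is also an isomorphism of precategories, whose object map is in particular an embedding. Using $G\circ\y=1_C$ together with associativity and the unit laws (\autoref{ct:functor-assoc} and \autoref{ct:units}) I would obtain the equality $(\y\circ G)\circ\y=\y=1_{\widehat C}\circ\y$ in $(\widehat C)^C$, and the embedding property then reflects it back to an honest equality $\y\circ G=1_{\widehat C}$.

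With both $G\circ\y=1_C$ and $\y\circ G=1_{\widehat C}$ in hand, $\y_0$ has the two-sided inverse $G_0$ and is therefore an equivalence of types; combined with full faithfulness of $\y$ (\autoref{ct:yoneda-embedding}), this is exactly the assertion that $\y$ is an isomorphism of precategories (\autoref{ct:isocat}), which closes the argument. The main obstacle is conceptual rather than computational: at each step one must extract genuine equalities of functors, not just natural isomorphisms, because only an isomorphism of precategories---carrying an equivalence of types on objects---transports ``being a category'' back to $C$. The hypothesis and \autoref{ct:cat-weq-eq} are designed to supply precisely such equalities, and the two applications above (one yielding the retraction $G$, one promoting it to a section) are what convert ``$C$ sees weak equivalences as isomorphisms'' into the desired conclusion.
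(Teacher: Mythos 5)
Your proof is correct and is essentially the paper's own argument: the forward direction is \autoref{ct:cat-weq-eq}, and for the converse both you and the paper apply the hypothesis to the Rezk completion $I\colon C\to\widehat C$ of \autoref{thm:rezk-completion} to obtain a strict retraction ($RI=1_C$), then cancel $I$ on the right in $IRI=I$ using the fact that precomposition with $I$ into the \emph{category} $\widehat C$ is an isomorphism (\autoref{ct:cat-weq-eq} again, as you correctly note---the hypothesis itself only covers target $C$), and finally transport ``is a category'' along the resulting identification of $C$ with $\widehat C$. The only cosmetic difference is the final packaging: the paper cites \autoref{ct:isoprecat}(iii) to conclude that $I$ is an isomorphism of precategories, whereas you verify \autoref{ct:isocat} directly from full faithfulness and the two-sided inverse on objects.
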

\begin{proof}
  ``Only if'' is \autoref{ct:cat-weq-eq}.
  In the other direction, let $H$ be $I:A\to\widehat A$.
  Then since $(-\circ I)_0$ is an equivalence, there exists $R:\widehat A\to A$ such that $RI=1_A$.
  Hence $IRI=I$, but again since $(-\circ I)_0$ is an equivalence, this implies $IR =1_{\widehat A}$.
  By \autoref{ct:isoprecat}\ref{item:ct:ipc3}, $I$ is an isomorphism of precategories.
  But then since $\widehat A$ is a category, so is $A$.
\end{proof}

\section{The Formalization}
\label{sec:formalization}

Large chunks of the material presented above have been formalized in the proof assistant \textsf{Coq}.
The version of \textsf{Coq} used is \textsf{Coq} 8.3pl5, patched according to the instructions given by
Voevodsky \cite{vv_foundations}.
Our formalization is based on Voevodsky's \emph{Foundations} library~\cite{vv_foundations},
and is available online \cite{rezk_coq}. 
It is also available as an addendum to this arXiv submission.

\subsection*{Design principles}
Our general design principles largely follow the conventions established by Voevodsky \cite{vv_foundations} 
with a few departures. 
Both use only three type constructors, namely $\Pi$, $\Sigma$, $\textsf{Id}$, and avoid most of the syntactic sugar of \textsf{Coq} (such as record types).
Both do use implicit arguments and, quite extensively, coercions.

We restrict ourselves to these basic type constructors since they have a well-understood
semantics in various homotopy-theoretic models. Implicit arguments and coercions are 
crucial to manage structures of high complexity. Furthermore, they reflect
 familiar mathematical practice.

As for the differences, the use of notations, especially with infix symbols (for example, \lstinline!f ;; g! for 
the composition of morphisms of a precategory) plays an important role in our formalization. 
We also use the section mechanism of \textsf{Coq} 
when several hypotheses are common to a series of constructions and lemmas, e.g., 
when constructing particular examples of complex structures.

\subsection*{Reading the code}

Since informal type theory, used in the previous sections, is supposed to match its formal equivalent quite closely, 
the statements of the formalization are very similar to the corresponding statements of the informal type theory. 
For example, our formal statement correponding to \autoref{def:whisker} looks as follows:
\begin{lstlisting}
Lemma is_nat_trans_pre_whisker (A B C : precategory) (F : functor A B)
   (G H : functor B C) (gamma : nat_trans G H) :
  is_nat_trans (G o F) (H o F) (fun a : A => gamma (F a)).
\end{lstlisting}

The major differences occur when we split a large definition in parts as, for example, for the definition of a precategory. 
We first define:

\begin{lstlisting}
Definition precategory_ob_mor := total2 (
  fun ob : UU => ob -> ob -> hSet).
\end{lstlisting}
Given an element \lstinline!C! of the above type, we write \lstinline!a : C!
for an inhabitant \lstinline!a! of its first component (using the \emph{coercion} mechanism of \textsf{Coq}) and \lstinline!a --> b! for the value of the second 
component on \lstinline!a b : C!.

We complete the data of a precategory by:

\begin{lstlisting}
Definition precategory_data := total2 (
   fun C : precategory_ob_mor =>
     dirprod (forall c : C, c --> c)
             (forall a b c : C, a --> b -> b --> c -> a --> c)).
\end{lstlisting}
In the following we write \lstinline!identity c! for the identity morphism
on an object \lstinline!c! and \lstinline!f ;; g! for the composite of 
morphisms \lstinline!f : a --> b! and \lstinline!g : b --> c!.

We define a predicate expressing that this data constitutes a precategory:
\begin{lstlisting}
Definition is_precategory (C : precategory_data) :=
   dirprod (dirprod (forall (a b : C) (f : a --> b),
                         identity a ;; f == f)
                     (forall (a b : C) (f : a --> b),
                         f ;; identity b == f))
            (forall (a b c d : C)
                    (f : a --> b)(g : b --> c) (h : c --> d),
                     f ;; (g ;; h) == (f ;; g) ;; h).
\end{lstlisting}
As the last step, we say that a precategory is given by the data of a precategory
satisfying the necessary axioms:
\begin{lstlisting}
Definition precategory := total2 is_precategory.
\end{lstlisting}

\subsection*{Contents of the formalization}

In this part of the project we aimed on formalizing the Rezk completion together with its universal property. The formalization consists of 10 files:
\begin{itemize}
 \item \texttt{precategories.v} which roughly covers \autoref{sec:cats}.
 \item \texttt{functors\textunderscore transformations.v} which roughly covers \autoref{sec:transfors}.
 \item \texttt{sub\textunderscore precategories.v} where we define sub-precategories and
                 the image factorization of a functor. 
          This is not a separate part of the paper, but it is used (in less generality) in \autoref{thm:rezk-completion}.
 \item \texttt{equivalences.v} where we cover parts of \autoref{sec:equivalences} needed for \autoref{ct:cat-weq-eq}.
 \item \texttt{category\textunderscore hset.v} where we define the precategory of sets and show that it is a category.
 \item \texttt{yoneda.v} where we cover the main parts of \autoref{ct:yoneda}.
 \item \texttt{whiskering.v} where we define the whiskering, see \autoref{def:whisker}.
 \item \texttt{precomp\textunderscore fully\textunderscore faithful.v} that covers \autoref{lem:precomp-faithful} and \ref{lem:precomp-full}.
 \item \texttt{precomp\textunderscore ess\textunderscore surj.v} that covers \autoref{ct:cat-weq-eq}.
 \item \texttt{rezk\textunderscore completion.v} that puts the previous files together exhibiting \autoref{thm:rezk-completion}.
\end{itemize}

\subsection*{Formalization vs informal definitions}

The formalization deviates very little from the informal definitions given in the previous sections.
We shall mention here the only example of such a deviation, resulting in a slicker definition. 
In \autoref{def:adjoint} the natural transformations $(\epsilon F)$ and $(F\eta)$ (similarly, 
$(G\epsilon)$ and $(\eta G)$) are actually not
composable! We have $\epsilon F : (FG)F \to 1_{B}F$ and $F\eta : F1_A \to F(GF)$. 
However, $(FG)F$ and $F(GF)$ are not convertible, i.e.\ not \emph{definitionally} equal, 
which would be necessary for the composition to typecheck. So in order to state the equality in question
we would have to insert a transport along propositional equality---see \autoref{ct:functor-assoc} and the subsequent discussion.

We overcome this issue by rephrasing the axiom: instead of requiring an equality of natural 
transformations, we require it to hold pointwise. 
These statements are logically and type-theoretically equivalent, but for the latter we have the
desired convertibility: for any $a : A$, the term $\big(F(GF)\big)(a)$ is convertible to $\big((FG)F\big)(a)$.

\subsection*{Statistics}

Our library comprises ten files with ca.\ 180 definitions and 170 lemmas altogether.
The \texttt{coqwc} tool counts 1200 lines of specification---definitions and statements of lemmas and theorems---and 
2700 lines of proof script overall.


\section{Conclusions and further work}
\label{sec:conclusion}

We have presented a new foundation for category theory, based on the general system of Univalent Foundations, with the following advantages:
\begin{itemize}
\item All category-theoretic constructions and proofs are automatically invariant under isomorphism of objects and under equivalence of categories (when performed with saturated categories).
\item In the rare case when we want to treat categories less invariantly, there is a separate notion available to use (strict categories).
  This allows both approaches to category theory to coexist simultaneously, with a type distinction making clear which one we are using at any given time.
\item There is a universal way to make a strict category (or, more generally, a precategory) into a saturated category, thereby passing to the invariant world in a very precise way.
  In higher-topos-theoretic semantics, this operation corresponds to the natural and well-known notion of stack completion.
\item The basic theory has all been formalized in a computer proof assistant.
\end{itemize}
One obvious direction for future work is to push forward the development of basic category theory in this system.
Another is to move on to \emph{higher} category theory: a theory of pre-2-categories and saturated 2-categories, at least, should be within reach.
Ideally, we would like a full theory of $(\infty,1)$-categories, but it has proven difficult to formalize such infinite structures in currently available type theories.

\bibliographystyle{plain}
\bibliography{hottcats}

\begin{thebibliography}{10}

\bibitem{rezk_coq}
Benedikt Ahrens, Krzysztof Kapulkin, and Michael Shulman.
\newblock {Univalent categories and the Rezk completion in \textsf{Coq}}.
\newblock Git repository of \textsf{Coq} files,
  \url{https://github.com/benediktahrens/rezk\_completion}, 2013.

\bibitem{arndt-kapulkin}
Peter Arndt and Krzysztof Kapulkin.
\newblock Homotopy-theoretic models of type theory.
\newblock In {\em Typed lambda calculi and applications}, volume 6690 of {\em
  Lecture Notes in Comput. Sci.}, pages 45--60. Springer, Heidelberg, 2011.

\bibitem{awodey-warren}
Steve Awodey and Michael~A. Warren.
\newblock Homotopy theoretic models of identity types.
\newblock {\em Math. Proc. Cambridge Philos. Soc.}, 146(1):45--55, 2009.

\bibitem{bsp12infncats}
Clark Barwick and Christopher Schommer-Pries.
\newblock On the unicity of the homotopy theory of higher categories.
\newblock \href{http://arxiv.org/abs/1112.0040}{arXiv:1112.0040}, 2011.

\bibitem{bergner:infty-one}
Julia~E. Bergner.
\newblock A survey of $(\infty, 1)$-categories.
\newblock In John~C. Baez and J.~Peter May, editors, {\em Towards Higher
  Categories}, volume 152 of {\em The IMA Volumes in Mathematics and its
  Applications}, pages 69--83. Springer, 2009.
\newblock \href{http://arxiv.org/abs/math/0610239}{arXiv:math.CT/0610239}.

\bibitem{bunge:stacks-morita-internal}
Marta Bunge.
\newblock Stack completions and {M}orita equivalence for categories in a topos.
\newblock {\em Cahiers Topologie G\'eom. Diff\'erentielle}, 20(4):401--436,
  1979.

\bibitem{dc:isoeq}
Thierry Coquand and Nils~Anders Danielsson.
\newblock Isomorphism is equality.
\newblock
  \url{http://www.cse.chalmers.se/~nad/publications/coquand-danielsson-isomorphism-is-equality.html},
  2013.

\bibitem{gonthier:feit-thompson}
Georges Gonthier et~al.
\newblock Math {C}omponents team: formalization of the {F}eit--{T}hompson
  theorem.
\newblock website, 2012.
\newblock
  \url{http://www.msr-inria.inria.fr/Projects/math-components/feit-thompson}.

\bibitem{hs:gpd-typethy}
Martin Hofmann and Thomas Streicher.
\newblock The groupoid interpretation of type theory.
\newblock In {\em Twenty-five years of constructive type theory ({V}enice,
  1995)}, volume~36 of {\em Oxford Logic Guides}, pages 83--111. Oxford Univ.
  Press, New York, 1998.

\bibitem{jt:strong-stacks}
Andr{\'e} Joyal and Myles Tierney.
\newblock Strong stacks and classifying spaces.
\newblock In {\em Category theory (Como, 1990)}, volume 1488 of {\em Lecture
  Notes in Math.}, pages 213--236. Springer, Berlin, 1991.

\bibitem{klv:ssetmodel}
Krzysztof Kapulkin, Peter~LeFanu Lumsdaine, and Vladimir Voevodsky.
\newblock The simplicial model of univalent foundations.
\newblock \href{http://arxiv.org/abs/1211.2851/}{arXiv:1211.2851}, 2012.

\bibitem{ls:hits}
Peter~LeFanu Lumsdaine and Michael Shulman.
\newblock Higher inductive types.
\newblock In preparation, 2013.

\bibitem{martin-lof:bibliopolis}
Per Martin-L{\"o}f.
\newblock {\em Intuitionistic type theory}, volume~1 of {\em Studies in Proof
  Theory. Lecture Notes}.
\newblock Bibliopolis, Naples, 1984.

\bibitem{pelayo-warren:univalent-foundations-paper}
\'Alvaro Pelayo and Michael~A. Warren.
\newblock {Homotopy type theory and Voevodsky's univalent foundations}.
\newblock \href{http://arxiv.org/abs/1210.5658}{arXiv:1210.5658}, 2012.

\bibitem{rezk01css}
Charles Rezk.
\newblock A model for the homotopy theory of homotopy theory.
\newblock {\em Trans. Amer. Math. Soc.}, 353(3):973--1007 (electronic), 2001.

\bibitem{HoTTbook}
The {Univalent Foundations Program}.
\newblock {\em Homotopy type theory: Univalent foundations of mathematics}.
\newblock 2013.
\newblock \url{http://homotopytypetheory.org/book}.

\bibitem{garner-van-den-berg:top-and-simp-models}
Benno van~den Berg and Richard Garner.
\newblock Topological and simplicial models of identity types.
\newblock {\em ACM Trans. Comput. Log.}, 13(1):Art. 3, 44, 2012.

\bibitem{vv_foundations}
Vladimir Voevodsky.
\newblock Experimental library of univalent formalization of mathematics.
\newblock \href{http://arxiv.org/abs/1401.0053}{arXiv:1401.0053}.

\bibitem{vv_uf}
Vladimir Voevodsky.
\newblock Univalent foundations project.
\newblock
  \url{http://www.math.ias.edu/~vladimir/Site3/Univalent_Foundations_files/univalent_foundations_project.pdf}.

\bibitem{warren:thesis}
Michael~A. Warren.
\newblock {\em {Homotopy Theoretic Aspects of Constructive Type Theory}}.
\newblock PhD thesis, Carnegie Mellon University, 2008.

\bibitem{werner:thesis}
Benjamin Werner.
\newblock {\em Une th{\'e}orie des constructions inductives}.
\newblock PhD thesis, Universit{\'e} Paris 7 (Denis Diderot), May 1994.

\end{thebibliography}

\end{document}